\newcommand{\dmu}{\,\mathrm{d}\mu}
\newcommand{\dx}{\,\mathrm{d}x}
\newcommand{\dt}{\,\mathrm{d}t}
\newcommand{\dy}{\,\mathrm{d}y}
\newcommand{\dnu}{\,\mathrm{d}\nu}
\newcommand{\R}{\ensuremath{\mathbb{R}}}
\newcommand{\qt}[0]{\tilde{Q}}
\newcommand{\st}[0]{\tilde{S}}
\newcommand{\br}{\mathbb{R}}
\newcommand{\bc}{\mathbb{C}}
\newcommand{\bn}{\mathbb{N}}
\newcommand{\be}{\mathbb{E}}
\newcommand{\bz}{\mathbb{Z}}
\newcommand{\cd}{\mathcal{D}}
\newcommand{\cf}{\mathcal{F}}
\newcommand{\cs}{\mathcal{S}}
\newcommand{\mc}{\mathcal}
\DeclarePairedDelimiter\abs{\lvert}{\rvert}
\DeclarePairedDelimiter\cbrace\{\}
\DeclarePairedDelimiter{\ip}\langle\rangle
\newcommand{\angles}[1]{\ip{#1}}
\DeclarePairedDelimiter{\nrm}\lVert\rVert
\newcommand{\norm}[1]{\nrm{#1}}
\newcommand{\nrmb}[1]{\bigl\|#1\bigr\|}
\newcommand{\absb}[1]{\bigl|#1\bigr|}
\newcommand{\hab}[1]{\bigl(#1\bigr)}
\newcommand{\cbraceb}[1]{\bigl\{#1\bigr\}}
\newcommand{\ipb}[1]{\bigl\langle#1\bigr\rangle}
\newcommand{\nrms}[1]{\Bigl\|#1\Bigr\|}
\newcommand{\abss}[1]{\Bigl|#1\Bigr|}
\newcommand{\has}[1]{\Bigl(#1\Bigr)}
\newcommand{\cbraces}[1]{\Bigl\{#1\Bigr\}}
\DeclareMathOperator{\dist}{dist}
\DeclareMathOperator{\BMO}{BMO}
\DeclareMathOperator{\ind}{\mathbf{1}}
\DeclareMathOperator{\supp}{supp}
\DeclareMathOperator{\loc}{loc}
\newcommand{\dd}{\,\mathrm{d}}
\theoremstyle{plain}
\newtheorem{theorem}{Theorem}[section]
\newtheorem{lemma}[theorem]{Lemma}
\newtheorem{proposition}[theorem]{Proposition}
\newtheorem{corollary}[theorem]{Corollary}
\newtheorem{TheoremLetter}{Theorem}
{}
\theoremstyle{definition}
\newtheorem{definition}[theorem]{Definition}
\theoremstyle{remark}
\newtheorem{remark}[theorem]{Remark}
\newtheorem*{example}{Example}
\numberwithin{equation}{section}
\begin{document}
\title[Weighted $L^p\to L^q$-boundedness of  commutators and paraproducts]{Weighted $L^p\to L^q$-boundedness of commutators and paraproducts in the Bloom setting} 
\begin{abstract}

As our main result, we supply the missing characterization of the $L^p(\mu)\to L^q(\lambda)$ boundedness of the commutator of a non-degenerate Calder\'on--Zygmund operator $T$ and pointwise multiplication by $b$ for exponents $1<q<p<\infty$ and Muckenhoupt weights $\mu\in A_p$ and $\lambda\in A_q$.
Namely, the commutator $[b,T]\colon L^p(\mu)\to L^q(\lambda)$ is bounded if and only if $b$ satisfies the following new, cancellative condition: $$M^\#_\nu b\in L^{pq/(p-q)}(\nu),$$ where
$M^\#_\nu b$ is the weighted  sharp maximal function defined by $$
M^\#_\nu b:=\sup_{Q} \frac{\ind_Q}{\nu(Q)} \int_{Q} |b-\langle b\rangle_Q |\,\mathrm{d}x$$
and $\nu$ is the Bloom weight defined by $\nu^{1/p+1/q'}:= \mu^{1/p} \lambda^{-1/q}$. 

In the unweighted case $\mu=\lambda=1$, by a result of Hyt\"onen the boundedness of the commutator $[b,T]$ is, after factoring out constants, characterized by the boundedness of  pointwise multiplication by $b$, which  amounts to the non-cancellative condition
$b\in L^{pq/(p-q)}$. 
We provide a counterexample showing that this characterization breaks down in the weighted case $\mu\in A_p$ and $\lambda\in A_q$. Therefore, the introduction of our new, cancellative condition is necessary.

In parallel to commutators, we also characterize the weighted boundedness of dyadic paraproducts $\Pi_b$ in the missing exponent range $p\neq q$. Combined with 
previous results in the complementary exponent ranges, our results complete the characterization of the weighted boundedness of both commutators and of paraproducts for all exponents $p,q \in (1,\infty)$.
\end{abstract}

\author{Timo S. H\"anninen}
\address{Timo S. H\"anninen, Department of Mathematics and Statistics, University of Helsinki, P.O. Box 68, FI-00014 Helsinki, Finland}
\email{timo.s.hanninen@helsinki.fi}
\author{Emiel Lorist}
\address{Emiel Lorist, Delft Institute of Applied Mathematics,
Delft University of Technology, P.O. Box 5031, 2600GA Delft, The
Netherlands} 
\email{e.lorist@tudelft.nl}
\author{Jaakko Sinko}
\address{Jaakko Sinko, Department of Mathematics and Statistics, University of Helsinki, P.O. Box 68, FI-00014 Helsinki, Finland}
\email{jaakko.sinko@hotmail.com}

\subjclass[2020]{Primary: 42B20; Secondary: 47B47}
\keywords{Commutator, Paraproduct, Singular integral, Muckenhoupt weight}
\maketitle
 
\setcounter{tocdepth}{1}
\tableofcontents

\section{Introduction}
\subsection{Commutators}
The boundedness of the commutator $[b,T]$ of a Calder\'on--Zygmund singular integral operator $T$  and pointwise multiplication by $b \in L^1_{\loc}$ plays an important role in harmonic analysis, which dates back to the work of Nehari \cite{Nehari1957} on the boundedness of $[b,H]$ on $L^p$, where $H$ denotes the Hilbert transform. This result was later extended  by Coifman, Rochberg, and Weiss \cite{coifman1976}, who showed that $[b,T]\colon L^p \to L^p$ is bounded for a wide class of Calder\'on--Zygmund operators $T$ with convolution kernels, $b \in \BMO$ and $p \in (1,\infty)$. Subsequent works have yielded a full characterization of those $b \in L^1_{\loc}$ for which $[b,T]\colon L^p \to L^q$ is bounded
for non-degenerate Calder\'on--Zygmund operators. The state-of-the-art result can be formulated as follows: For $p,q \in (1,\infty)$, $T$ a non-degenerate  Calder\'on--Zygmund operator and $b \in L^1_{\loc}$, there holds
\begin{align*}
 \nrmb{[b,T]}_{L^p \to L^q} \eqsim \begin{cases}
\nrm{b}_{\BMO} & p=q, \qquad \text{\cite{coifman1976},}\\
\nrm{b}_{\dot{C}^{0,\alpha}} \quad \frac\alpha{d} = \frac1p-\frac1q& p<q, \qquad \text{\cite{Janson1978},}\\
\nrm{b}_{\dot{L}^r}  \hspace{18pt} \frac1{r} = \frac1q-\frac1p\qquad & p>q, \qquad \text{\cite{hytonen2021},}
 \end{cases}
\end{align*}
where $\dot C^{0,\alpha}$ denotes the space of all $\alpha$-H\"older continuous functions and $\dot{L}^r$ denotes the Lebesgue space $L^r$ modulo constants.
We refer to \cite{hytonen2021} and \cite[Section 1.1]{HOS23} for a survey of all results leading up to this full characterization. An analogous characterization in a different context, namely on the boundedness of a Toeplitz type operator between Hardy spaces on the unit ball, was obtained by Pau and Per\"al\"a \cite{pau2020}.

\medskip

Weighted versions of commutator estimates in the case $p=q$ date back to the work of Str\"omberg, who gave an alternative proof of the boundedness of $[b,H]$ on $L^p$ using the Fefferman--Stein maximal function, which also showed the boundedness of $[b,H]$ on $L^p(w)$ for any Muckenhoupt weight $w \in A_p$ (cf. \cite[p. 419]{To86}). A sharp version of this result in terms of the weight characteristic $[w]_{A_p}$ was obtained by Chung \cite{Ch11}, which  was afterwards extended to any Calder\'on--Zygmund operator by Chung, Pereyra and Perez \cite{CPP12}. In particular, they showed that any Calder\'on--Zygmund operator $T$ satisfies
\begin{equation}\label{eq:sharponeweight}
    \nrmb{[b,T]}_{L^p(w) \to L^p(w)} \lesssim [w]_{A_p}^{2\max\cbrace{1,\frac1{p-1}}} \nrm{b}_{\BMO}
\end{equation}
for all $w \in A_p$ and $p \in (1,\infty)$ and the dependence on $[w]_{A_p}$ is sharp.

These results have been generalized to a two-weight setting by taking weights $\mu,\lambda \in A_p$ and asking for a characterization of those $b \in L^1_{\loc}$ such that $[b,T]$ is bounded from $L^p(\mu)$ to $L^p(\lambda)$. In this setting, long before even the one-weight works \cite{Ch11, CPP12}, Bloom \cite{Bloom1985} proved that for $\mu,\lambda \in A_p$ and $p \in (1,\infty)$, it holds that $[b,H] \colon L^p(\mu) \to L^p(\lambda)$ is bounded if and only if $b \in \BMO_\nu$. Here $\nu := \mu^{1/p}\lambda^{-1/p}$ and $\BMO_\nu$ is defined as the space of all $f \in L^1_{\loc}$ such that
$$
\nrm{f}_{\BMO_\nu}:= \sup_Q \frac{1}{\nu(Q)} \int_Q \abs{f - \ip{f}_Q  } \dd x<\infty,
$$
where $\ip{f}_Q = \frac{1}{\abs{Q}} \int_Q f$ and the supremum is taken over all cubes $Q$ in $\R^d$.
Note that $\BMO_\nu = \BMO$ when $\mu=\lambda$. 
The sufficiency of $b \in \BMO_\nu$ for the boundedness of $[b,T]$ from $L^p(\mu)$ to $L^p(\lambda)$ was later extended to general Calder\'on--Zygmund operators by Segovia and Torrea \cite{ST93b}.   

\medskip

 The study of characterizing the boundedness $[b,T]\colon L^p(\mu)\to L^q(\lambda)$ of commutators (or other operators) for $p,q\in(1,\infty)$ by means of conditions on the Bloom weight $\nu$, which is connected to the weights $\mu$ and $\lambda$ by the relation $\nu^{\frac1p+\frac1{q'}} := \mu^{\frac1p} \lambda^{-\frac1q}$, is now called the \emph{Bloom setting}. In this setting, the weights $\mu$ and $\lambda$  are typically satisfying the a priori assumption  $\mu \in A_p$ and $\lambda\in A_q$. We refer to \cite[Section 6]{LOR21} for a discussion of the necessity of the conditions $\mu ,\lambda\in A_p$ for the boundedness of the commutator $[b,T]$ from $L^p(\mu)$ to $L^p(\lambda)$.

The Bloom setting has in recent years attracted renewed attention, starting with the works \cite{HLW16,HLW17} of Holmes, Lacey and Wick.  This has led to the following current state-of-the-art result: For $1<p\leq q<\infty$, $\mu \in A_p$, $\lambda \in A_q$, $T$ a non-degenerate Calder\'on--Zygmund operator and $b \in L^1_{\loc}$, there holds
\begin{align*}
 \nrmb{[b,T]}_{L^p(\mu) \to L^q(\lambda)} &\lesssim \begin{cases}
\hab{[\mu]_{A_p}[\lambda]_{A_q}}^{\max\cbrace{1,\frac{1}{p-1}}}\nrm{b}_{\BMO_\nu} & p=q, \qquad \text{\cite{LOR17},}\\
C_{\mu,\lambda} \, \nrm{b}_{\BMO_\nu^\alpha} \qquad \frac\alpha{d} = \frac1p-\frac1q& p<q, \qquad \text{\cite{HOS23},}
 \end{cases}
\intertext{and}
 \nrmb{[b,T]}_{L^p(\mu) \to L^q(\lambda)} &\gtrsim C_{\mu,\lambda} \begin{cases}
\nrm{b}_{\BMO_\nu} & p=q, \qquad \text{\cite{hytonen2021},}\\
\nrm{b}_{\BMO_\nu^\alpha} \qquad \frac\alpha{d} = \frac1p-\frac1q& p<q, \qquad \text{\cite{HOS23}.}
 \end{cases}
\end{align*}
Here $\nu^{\frac1p+\frac1{q'}} := \mu^{\frac1p} \lambda^{-\frac1q}$ and $\BMO_\nu^\alpha$ is defined as the space of all $f \in L^1_{\loc}$ such that
$$
\nrm{f}_{\BMO_\nu^\alpha}:= \sup_Q \frac{1}{\nu(Q)^{1+\frac{\alpha}{d}}} \int_Q \abs{f - \ip{f}_Q  } \dd x <\infty,
$$
where the supremum is taken over all cubes $Q$ in $\R^d$.
We note that these results recover the unweighted setting for $p\leq q$, since  $\BMO_1^\alpha = \dot{C}^{0,\alpha}$. Moreover, we note that $\BMO_{\nu}^0 = \BMO_\nu$.
We refer to \cite[Section 1.2]{HOS23} for a survey of the results leading up to this characterization.

\medskip

Any characterization for the off-diagonal (upper triangular) case $p>q$  in the Bloom setting has been missing so far. In this paper, we will characterize the boundedness of $[b,T]$ for non-degenerate Calder\'on--Zygmund operators in this case $1<q<p<\infty$ by introducing a new cancellative condition on $b$. This completes the characterization of the boundedness of $[b,T]$ from $L^p(\mu)$ to $L^q(\lambda)$ for all $p,q\in(1,\infty)$ in the Bloom setting. The case $q<p$ is important, for example, for questions related to the Jacobian problem (see, e.g., \cite{Hy21c, Li17c, Li22}).

Fix $1<q<p<\infty$ and $\frac1r: = \frac1q-\frac1p$. In the unweighted case, the sufficiency of the condition  $b \in L^r$ for the boundedness of $[b,T]$ is very simple. Indeed, this is a direct consequence of the boundedness of $T$ on $L^p$ and $L^q$ and H\"older's inequality:
\begin{align*}
    \nrm{[b,T]f}_{L^q} \leq \nrm{b Tf}_{L^q} + \nrm{T(bf)}_{L^q} &\leq \nrm{b}_{L^r} \nrm{Tf}_{L^p} + \nrm{T}_{L^q \to L^q} \nrm{bf}_{L^q}\\ &\leq \hab{\nrm{T}_{L^p \to L^p} +\nrm{T}_{L^q \to L^q} } \nrm{b}_{L^r} \nrm{f}_{L^p}.
\end{align*}
The surprising fact from \cite{hytonen2021} is that $b \in \dot{L}^r$ is actually also necessary for the boundedness of $[b,T]$ from $L^p$ to $L^q$, proving that, after factoring out constants, there is no mutual cancellation between the two terms of the commutator. This is in stark contrast with the case $p=q$ and thus $r=\infty$, in which case the two individual terms are only bounded if $b \in L^\infty$ but combined one can allow $b \in \BMO$ by the seminal result of Coifman, Rochberg and Weiss \cite{coifman1976}.

In the weighted setting, it is well known that a Calder\'on--Zygmund operator $T$ is bounded on $L^p(\mu)$ for $\mu\in A_p$. Translating the observations above to the Bloom setting, one may therefore conjecture that the boundedness of $[b,T]$ from $L^p(\mu)$ to $L^q(\lambda)$ for $\mu \in A_p$ and $\lambda \in A_q$ can be characterized by the boundedness of the multiplication map $f \mapsto bf$, modulo constants. By H\"older's inequality, one easily checks that
$$\nrm{f \mapsto bf}_{L^p(\mu)\to L^q(\lambda)} = \nrm{b\nu^{-1}}_{L^r(\nu)},$$
which would suggest the right-hand side, modulo constants, as the canonical condition on $b$ for the boundedness of $[b,T]$ from $L^p(\mu)$ to $L^q(\lambda)$. However, although this condition is clearly sufficient for the boundedness of $[b,T]$, it turns out to be non-necessary, as we shall prove in Section \ref{sec:nonneccessary}. Thus, in the Bloom setting,  $[b,T]$ can be bounded from $L^p(\mu)$ to $L^q(\lambda)$ for more $b \in L^1_{\loc}$ than those $b$ for which the individual terms of the commutator are bounded, i.e. cancellation plays a role in the Bloom setting. To characterize boundedness of $[b,T]$ from $L^p(\mu)$ to $L^q(\lambda)$  we therefore have to introduce a new, cancellative condition on $b$.

To state our condition on $b$, let us introduce a weighted sharp maximal function. For $b \in L^1_{\loc}$ and a weight $\nu$, we define
$$
M^\#_\nu b:=\sup_{Q} \frac{\ind_Q}{\nu(Q)} \int_Q |b-\angles{b}_Q |\dx.$$
We will show that boundedness of $[b,T]$ from $L^p(\mu)$ to $L^q(\lambda)$ for $\mu \in A_p$ and $\lambda \in A_q$ for $p>q$ can be characterized by the assumption $M^\#_\nu b \in L^r(\nu)$ with $\nu^{1/p+1/q'} := \mu^{1/p}\lambda^{-1/q}$. Note that when $\nu=1$, and $c$ denotes a constant, we have 
$$
\nrmb{M^\#_\nu b}_{L^r(\nu)} \eqsim \inf_c \, \nrmb{(b-c)\nu^{-1}}_{L^r(\nu)},
$$
by the classical result of Fefferman and Stein \cite{FS72}, which actually extends to the case $\nu \in A_{r'}$ (see Proposition \ref{prop:feffermansteintyperesult}). However, the assumptions on $\mu$ and $\lambda$ only yield $\nu \in A_{2r'}$ (see Lemma \ref{lemma:muckenhoupt_class_bloom}), which explains why we cannot characterize the boundedness of $[b,T]$ from $L^p(\mu)$ to $L^q(\lambda)$ by
$$
\inf_c\nrmb{(b-c)\nu^{-1}}_{L^r(\nu)}<\infty.
$$

Our main result for commutators reads as follows. We refer to Subsection \ref{subsection:CZO} for the definition of a non-degenerate $\omega$-Calder\'on-Zygmund operator.

\begin{TheoremLetter}\label{thm:maincommutator}
Let $1<q<p<\infty$, $\mu \in A_p$ and $\lambda \in A_q$. Set $\frac1r := \frac1q-\frac1p$ and $\nu^{\frac1p+\frac1{q'}} := \mu^{\frac1p} \lambda^{-\frac1q}$.
Let $T$ be a non-degenerate $\omega$-Calder\'on-Zygmund operator with $\omega$ satisfying the Dini condition. For $b \in L^1_{\loc}$,
 we have
\begin{align*}
      \nrmb{[b,T]}_{L^p(\mu) \to L^q(\lambda)}&\lesssim [\mu]_{A_p}^{\max\cbrace{1,\frac{1}{p-1}}}[\lambda]_{A_q}^{\max\cbrace{1,\frac{1}{q-1}}}\nrm{M^{\sharp}_{\nu}b}_{L^r(\nu)},
      \intertext{and }
      \nrmb{[b,T]}_{L^p(\mu) \to L^q(\lambda)}&\gtrsim C_{\mu,\lambda}\nrm{M^{\sharp}_{\nu}b}_{L^r(\nu)}.
\end{align*}
\end{TheoremLetter}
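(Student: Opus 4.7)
My plan is to prove the two bounds separately, using sparse domination together with duality for the upper bound and a Bloom adaptation of the argument of \cite{hytonen2021} for the lower bound.

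\textbf{Upper bound.} I would begin with the standard pointwise sparse domination of commutators: for every $f\in L^p(\mu)$ there is a sparse family $\cs$, depending on $f$, such that
\begin{equation*}
\absb{[b,T]f(x)} \lesssim \sum_{Q\in\cs} \bracs{\abs{b(x)-\ip{b}_Q}\ip{\abs{f}}_Q + \ipb{\abs{b-\ip{b}_Q}\abs{f}}_Q}\ind_Q(x),
\end{equation*}
with implicit constant depending only on the $\omega$-Calder\'on--Zygmund constants of $T$. Each of the two sparse forms is then estimated by duality, pairing the $L^q(\lambda)$-norm against $g\lambda^{1/q}$ with $g\in L^{q'}$ of unit norm. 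The oscillation of $b$ is absorbed by the sharp maximal function via the elementary bound $\int_Q\abs{b-\ip{b}_Q}\dx\leq\nu(Q)\inf_{x\in Q}M^\#_\nu b(x)$, which reduces the problem to a trilinear sparse sum in $f$, $g$ and $M^\#_\nu b$. A simultaneous principal-cube stopping-time construction for these three functions, adapted respectively to the weights $\mu$, $\lambda^{1-q'}$ and $\nu$, then lets me close by H\"older in the triple $(p,q',r)$ of exponents whose reciprocals sum to $1$. The Bloom relation $\nu^{1/p+1/q'}=\mu^{1/p}\lambda^{-1/q}$ makes the weights recombine correctly, while the $A_p$- and $A_q$-characteristics enter when converting the Carleson sums over the stopping cubes into $L^p(\mu)$- and $L^{q'}(\lambda^{1-q'})$-norms.

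\textbf{Lower bound.} For the necessity I would adapt the test-function construction of \cite{hytonen2021} to the Bloom setting. Fix a cube $Q$ and test $[b,T]$ against $f_Q=\sigma_Q\mu^{-1}\ind_Q$ with $\sigma_Q=\operatorname{sgn}(b-\ip{b}_Q)$. Using the non-degeneracy of $T$, I would locate a cube $\widetilde Q$ comparable in size to and disjoint from $Q$ on which $Tf_Q$ has a definite sign of magnitude $\gtrsim\mu(Q)/\abs{Q}$; evaluating the commutator on $\widetilde Q$ then produces the single-cube inequality
\begin{equation*}
\frac{1}{\nu(Q)}\int_Q\abs{b-\ip{b}_Q}\dx \leq C_{\mu,\lambda}\nrmb{[b,T]}_{L^p(\mu)\to L^q(\lambda)},
\end{equation*}
which is already the full estimate in the case $r=\infty$. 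To upgrade to $L^r(\nu)$ I would linearize $M^\#_\nu b$ by selecting for each $x$ a near-optimal cube $Q_x\ni x$, apply a Vitali-type disjointification to extract a sparse-like family, and use duality against $L^{r'}(\nu)$ to convert the family of pointwise estimates into the desired global bound, leveraging if needed the dyadic paraproduct characterization obtained in parallel within the paper.

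\textbf{Main obstacle.} The principal difficulty lies in the upper bound. As explained immediately after the theorem statement, the Fefferman--Stein-type inequality that would replace $\nrm{M^\#_\nu b}_{L^r(\nu)}$ by $\inf_c\nrm{(b-c)\nu^{-1}}_{L^r(\nu)}$ requires $\nu\in A_{r'}$, whereas the Bloom hypotheses only guarantee $\nu\in A_{2r'}$; one therefore cannot pass to this simpler condition and must carry $M^\#_\nu b$ through the argument as the central object. Orchestrating the simultaneous principal-cube decompositions of $f$, $g$ and $M^\#_\nu b$ so that they interact cleanly with the sparse family, while also producing the sharp quantitative dependence on $[\mu]_{A_p}$ and $[\mu]_{A_q}$, is the most delicate aspect.
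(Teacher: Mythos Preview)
Your outline captures the right architecture but diverges from the paper's execution in both directions, with one genuine gap.

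\textbf{Upper bound.} Your triple principal-cube scheme is not what the paper does. After the sparse domination of \cite{LOR17}, the paper avoids a fresh stopping-time argument altogether: it invokes the second-sparse-family lemma \cite[Lemma~5.1]{LOR17} to linearise the oscillation, bounds $\sum_{R\in\tilde{\cs}:R\subseteq Q}\int_R|b-\ip{b}_R|\,\ip{|f|}_R\dx\leq\int_Q M^\#_\nu b\cdot\mc{A}_{\tilde{\cs},1}f\,\dnu$ directly, and then applies the packaged weighted sparse bound of \cite{FH18} (Lemma~\ref{lemma:weightedsparse}) twice---once for the outer sparse operator on $L^q(\lambda)$, once for the inner one on $L^p(\mu)$---with a single H\"older step in between. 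This modular route reads off the constants $[\mu]_{A_p}^{\max\{1,1/(p-1)\}}[\lambda]_{A_q}^{\max\{1,1/(q-1)\}}$ immediately; a bare three-function stopping construction would plausibly give a qualitatively correct bound, but extracting the stated exponents would be harder.

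\textbf{Lower bound.} Here there is a real gap. Your Vitali-plus-duality step correctly leads to a sum $\sum_S\gamma_S\,\abs{\ip{g_{\tilde S},[b,T]f_S}}$ with $(\gamma_S)\in\ell^{r'}(\nu)$, but you have not said how to convert this bilinear sum into a single application of the operator norm. The paper's device (Lemma~\ref{lemma:necessary_general}) is Rademacher randomisation: split $\gamma_S=\alpha_S\beta_S$ via the Bloom relation, write
\[
\sum_S\ip{\beta_Sg_{\tilde S},[b,T](\alpha_Sf_S)}=\be\,\ips{\sum_T\epsilon_T\beta_Tg_{\tilde T},\,[b,T]\has{\sum_S\epsilon_S\alpha_Sf_S}},
\]
apply $\nrm{[b,T]}_{L^p(\mu)\to L^q(\lambda)}$ once, and close using the fact that the dilated cubes $\{aS\}$ remain $\nu$-sparse and hence $\mu$- and $\lambda'$-sparse via $A_\infty(\nu)$ (Lemma~\ref{lemma:ainfinity_wrt_bloom}). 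Without this decoupling you are left with an $\ell^1$ sum of individual operator-norm applications, which does not yield the $\ell^r$ conclusion. Also, your test function $f_Q=\sigma_Q\mu^{-1}\ind_Q$ is not the one needed: Hyt\"onen's lemma (Lemma~\ref{lemma:oscillation_by_operator}) produces $|f_Q|\leq\ind_Q$ and $|g_{\tilde Q}|\leq\ind_{\tilde Q}$ on a disjoint companion cube $\tilde Q$, and it is these bounded test functions that feed into the randomised sum.
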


The proof of the upper estimate in Theorem \ref{thm:maincommutator} can be found in Theorem \ref{theorem:uppercommutator} and the lower estimate in Corollary \ref{corollary:commutator}. These theorems actually prove more general statements:
\begin{itemize}
\item In  Theorem \ref{theorem:uppercommutator}  we also revisit the upper bound in the case $1<p<q<\infty$ from \cite{HOS23}. Indeed, we obtain a quantitative bound on $\nrm{[b,T]}_{L^p(\mu) \to L^q(\lambda)}$ in terms of $[\mu]_{A_p}$ and $[\lambda]_{A_q}$, which recovers the known sharp one-weight estimate in \eqref{eq:sharponeweight} for $\mu=\lambda$ and $q \to p$. Tracking this dependence in the proof of \cite[Theorem 2.4]{HOS23} would yield a worse dependence on $[\mu]_{A_p}$ and $[\lambda]_{A_q}$.
\item In Theorem \ref{thm:lowercommutator}  we replace the conditions $\mu \in A_p$ and $\lambda \in A_q$ in the lower bound by a weaker condition on the pair of weights $(\mu,\lambda)$. 
\end{itemize}

In a follow-up paper, we will characterize when $[b,T]$ is compact from $L^p(\mu)$ to $L^q(\lambda)$ in the case $1<q<p<\infty$, which in the unweighted setting was recently characterized by Hyt\"onen, Li, Tao and Yang in \cite{HLTY22}.  Combined with the case $1<p<q<\infty$ from \cite{HOS23}, this will also complete the characterization of the compactness of commutators in the Bloom setting.

\subsection{Paraproducts}
Paraproducts also play a vital role in harmonic analysis, for example in the celebrated dyadic representation theorem for Calder\'on--Zygmund operators by Hyt\"onen \cite{Hy12,Hy17}. 
A paraproduct $\Pi_b$ with a $b \in L^1_{\loc}$  is typically less singular than the commutator $[b,T]$ for a Calder\'on--Zygmund operator $T$. Indeed, using the aforementioned dyadic representation theorem, one can write the commutator $[b,T]$ as the sum of compositions of paraproducts and Haar shift operators (cf. \cite[Section 5]{HLW17}). Consequently, the sharp exponent in the dependence on a weight characteristic is typically smaller for paraproducts than for commutators. For example, in the one-weight setting, it is well-known (see e.g. \cite{Pe19}) that for $p \in (1,\infty)$, $w \in A_p$ and $b \in L^1_{\loc}$ the paraproduct $\Pi_b$ is bounded on $L^p(w)$ with sharp dependence on $[w]_{A_p}$ given by
\begin{equation*}
    \nrm{\Pi_b}_{L^p(w) \to L^p(w)} \lesssim [w]_{A_p}^{\max\cbrace{1,\frac{1}{p-1}}}\nrm{b}_{\BMO},
\end{equation*}
which should be compared to \eqref{eq:sharponeweight}.

In the recent work \cite{FH22} by Fragkiadaki and Holmes Fay, the Bloom setting  has also been investigated for paraproducts in the case $p=q$. They showed for $p \in (1,\infty)$,  $\mu,\lambda \in A_p$ and $\nu := {\mu}^{1/p}\lambda^{-1/p}$ that 
\begin{equation*}
    \nrm{\Pi_b}_{L^p(\mu) \to L^p(\lambda)} \lesssim [\mu]_{A_p}^{\frac{1}{p-1}}[\lambda]_{A_p}\nrm{b}_{\BMO_\nu},
\end{equation*}
which should be compared to the result for commutators in \cite{LOR17} mentioned before. Moreover, they showed that in case $p=2$ and $\lambda = \mu^{-1}$ their estimate is sharp.

\medskip

In this paper, we will give a full characterization of the boundedness of $\Pi_b$ from $L^p(\mu)$ to $L^q(\lambda)$ for $p,q \in (1,\infty)$ and $b \in L^1_{\loc}$ in the Bloom setting.
Our main result for paraproducts reads as follows. We refer to Subsection \ref{subsection:para} for the definition of the paraproduct $\Pi_b$.

\begin{TheoremLetter}\label{thm:mainparaproduct}
Let $1<p,q<\infty$, $\mu \in A_p$ and $\lambda \in A_q$. Set $\frac1r := \frac1q-\frac1p$,  $\frac\alpha{d} := \frac1p-\frac1q$ and $\nu^{\frac1p+\frac1{q'}} := \mu^{\frac1p} \lambda^{-\frac1q}$. For $b \in L^1_{\loc}$ we have
\begin{align*}
       \nrm{\Pi_b}_{L^p(\mu) \to L^q(\lambda)} &\lesssim [\mu]^{\frac{1}{p-1}}_{A_p} [\lambda]_{A_q}   \begin{cases} \nrm{b}_{\BMO_\nu^\alpha} & p\leq q,\\
     [\nu]_{A_\infty}^{1/r} \nrm{M^{\sharp}_{\nu}b}_{L^r(\nu)}  \quad &p>q,
       \end{cases}
    \intertext{and}
 \nrm{\Pi_b}_{L^p(\mu) \to L^q(\lambda)} &\gtrsim C_{\mu,\lambda} \begin{cases} \nrm{b}_{\BMO_\nu^\alpha} & p\leq q,\\
     \nrm{M^{\sharp}_{\nu}b}_{L^r(\nu)}  \quad &p>q.
     \end{cases}
\end{align*}
\end{TheoremLetter}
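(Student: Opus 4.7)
The main tool is the Haar representation $\Pi_b f = \sum_{Q \in \cd} \ip{b, h_Q}\ip{f}_Q\, h_Q$, together with two-weight Carleson embedding theorems for dyadic sequences and the weighted Fefferman--Stein inequality recalled in Proposition \ref{prop:feffermansteintyperesult}. My plan is to prove the upper and lower bounds separately, distinguishing the cases $p\le q$ and $p>q$ in each.

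\textbf{Upper bound.} The starting point is duality: for unit-norm $g \in L^{q'}(\lambda^{1-q'})$,
\[
\ip{\Pi_b f, g} = \sum_{Q \in \cd} \ip{b, h_Q}\ip{f}_Q \ip{g, h_Q}.
\]
In the case $p \le q$, the strategy is to recognise $\{\ip{b, h_Q}^2\}_Q$ as a Carleson sequence against a suitable weighted measure encoding the $\BMO_\nu^\alpha$ condition (via the Haar characterisation of weighted BMO) and to apply a weighted Carleson embedding. The two-weight estimate for the dyadic averages of $f$ in $L^p(\mu)$ and of $g$ in $L^{q'}(\lambda^{1-q'})$ yields the factor $[\mu]^{1/(p-1)}_{A_p}[\lambda]_{A_q}$. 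In the case $p > q$, I would insert a Hölder step with exponent $r$ to peel off a purely $b$-dependent factor in $L^r(\nu)$, and then bound that factor by $[\nu]_{A_\infty}^{1/r}\nrm{M^{\sharp}_{\nu}b}_{L^r(\nu)}$ via Proposition \ref{prop:feffermansteintyperesult}: the Fefferman--Stein duality naturally requires $\nu \in A_{r'}$, but we only have $\nu \in A_{2r'}$ (Lemma \ref{lemma:muckenhoupt_class_bloom}), and the missing room is absorbed into the $[\nu]_{A_\infty}^{1/r}$ factor via the reverse-Hölder self-improvement of $A_\infty$ weights.

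\textbf{Lower bound.} The plan is a testing argument. Fix a cube $Q_0$ and take $f = \ind_{Q_0}/\mu(Q_0)^{1/p}$, of unit norm in $L^p(\mu)$. On $Q_0$ the Haar sum telescopes, giving $\Pi_b f = \mu(Q_0)^{-1/p}(b - \ip{b}_{Q_0})\ind_{Q_0}$ plus contributions from ancestor cubes that are constant on $Q_0$ and contribute negligibly once paired with a mean-zero test function. Pairing with $g_{Q_0} = \operatorname{sgn}(b-\ip{b}_{Q_0})\,\ind_{Q_0}/\nrm{\ind_{Q_0}}_{L^{q'}(\lambda^{1-q'})}$ extracts a local bound proportional to $\nu(Q_0)^{-1}\int_{Q_0}|b-\ip{b}_{Q_0}|\dx$, after invoking the Bloom normalisation $\mu(Q_0)^{1/p}\lambda^{1-q'}(Q_0)^{1/q'} \sim \nu(Q_0)$. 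In the case $p\le q$, taking the supremum over $Q_0$ yields $\nrm{b}_{\BMO_\nu^\alpha}$; in the case $p > q$, applying the same local bound along a principal-cube stopping family of $M^\#_\nu b$ and summing in $\ell^r$ produces $\nrm{M^{\sharp}_{\nu}b}_{L^r(\nu)}$.

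\textbf{Main obstacle.} The hardest step is the upper bound in the regime $p > q$, where the Fefferman--Stein inverse for $M^\#_\nu$ only applies cleanly under $\nu \in A_{r'}$, while the Bloom hypotheses yield only $\nu \in A_{2r'}$. Cleanly producing exactly the $[\nu]_{A_\infty}^{1/r}$ penalty factor, and avoiding any additional losses in the weight characteristics along the chain of Hölder and embedding inequalities, is the main technical challenge.
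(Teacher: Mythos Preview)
Your upper bound in the regime $p>q$ has a genuine gap. You propose to peel off a $b$-dependent factor and then bound it by $[\nu]_{A_\infty}^{1/r}\nrm{M^{\sharp}_\nu b}_{L^r(\nu)}$ via Proposition~\ref{prop:feffermansteintyperesult}(ii), hoping that the lack of $\nu\in A_{r'}$ can be repaired by reverse H\"older self-improvement. It cannot: self-improvement only pushes $A_{2r'}$ to $A_{2r'-\varepsilon}$ for a small $\varepsilon$, never to $A_{r'}$. Section~\ref{sec:nonneccessary} of the paper is devoted precisely to showing that for $\nu\in A_{2r'}\setminus A_{r'}$ there exist $b$ with $M^\#_\nu b\in L^r(\nu)$ but $\inf_c\nrm{(b-c)\nu^{-1}}_{L^r(\nu)}=\infty$; hence any route through the multiplier norm (which is what Proposition~\ref{prop:feffermansteintyperesult}(ii) controls) is blocked. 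The paper avoids Fefferman--Stein entirely here: it first sparsely dominates $\Pi_b$ (Theorem~\ref{theorem:sparsepara}) by $\sum_{Q\in\cs}\ip{|b-\ip{b}_Q|}_Q\ip{|f|}_Q\ind_Q$, then in Proposition~\ref{prop:weightedparaproduct} applies H\"older over the sparse collection so that the $b$-factor becomes $\bigl(\sum_{Q\in\cs}(\nu(Q)^{-1}\int_Q|b-\ip{b}_Q|)^r\nu(Q)\bigr)^{1/r}$. This is a discrete, sparse quantity, and the $[\nu]_{A_\infty}^{1/r}$ factor arises only from passing from Lebesgue-sparseness to $\nu$-sparseness (Corollary~\ref{cor:Ainftysparse}), after which the sum is dominated pointwise by $\nrm{M^\#_\nu b}_{L^r(\nu)}$ via disjointness of the sparse pieces.

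Your lower bound for $p>q$ also has a gap. The single-cube testing together with $(\mu,\lambda')\in B_{p,q'}(\nu)$ yields only $\nu(S)^{-1}\int_S|b-\ip{b}_S|\lesssim \nrm{\Pi_b}\,\nu(S)^{-1/r}$, and summing the $r$-th powers over a sparse family gives $\nrm{\Pi_b}^r\sum_{S\in\cs}1$, which diverges. You cannot apply the operator bound cube by cube and then sum; you must apply it once to a linear combination of the test functions. The paper does this via Rademacher randomisation (Lemma~\ref{lemma:necessary_general}): writing $\sum_S\gamma_S\int g_S\,\Pi_b f_S = \E\int(\sum_T\varepsilon_T\beta_T g_T)\,\Pi_b(\sum_S\varepsilon_S\alpha_S f_S)$ with carefully split coefficients $\alpha_S,\beta_S$, one applies the $L^p(\mu)\to L^q(\lambda)$ bound once and then controls $\nrm{\sum_S\alpha_S\ind_S}_{L^p(\mu)}$ and $\nrm{\sum_S\beta_S\ind_S}_{L^{q'}(\lambda')}$ via the $A_\infty(\nu)$ property of $\mu$ and $\lambda'$ (Lemma~\ref{lemma:ainfinity_wrt_bloom}) together with the weighted sparse estimate of Lemma~\ref{lemma:weighted_basic}. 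The discretisation of $\nrm{M^\#_\nu b}_{L^r(\nu)}$ via principal cubes (Lemma~\ref{lemma:discretization}) is the right starting point, but the $\ell^r$ sum must be dualised and decoupled before invoking boundedness of $\Pi_b$.
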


We note that, in the setting of Theorem \ref{thm:mainparaproduct}, we always have $\nu \in A_{2r'}$ and hence $\nu \in A_{\infty}$. The proof of the upper estimate in Theorem \ref{thm:mainparaproduct} can be found in Theorem \ref{theorem:bloompara} (see also Remark \ref{rem:sharpFH}) and the lower estimate in Theorem \ref{theorem:lowerparaproduct4}. Once again, these theorems actually prove more general statements:
\begin{itemize}
    \item Viewing a paraproduct as a bilinear operator, one would expect the upper bound in Theorem \ref{thm:mainparaproduct} to hold for the pair of weights $(\mu,\lambda)$ in a genuinely multilinear weight class (cf. \cite[Section 4.6]{ALM22}). In Theorem \ref{theorem:bloompara} we will show that it suffices to assume the weaker condition
$$
     \sup_Q \, \ipb{\mu^{1-p'}}_Q^{\frac{1}{p'}} \ip{\lambda}^{\frac1q}_Q  \ip{\nu}^{\frac{1}{p}+\frac1{q'}}_Q <\infty$$
    for the upper bound to hold. In fact,  this condition on the weights is also necessary for the stated upper bound, as we will show in Lemma \ref{lem:weightnec}.
\item For the lower estimate, we again replace the conditions $\mu \in A_p$ and $\lambda \in A_q$ in Theorem \ref{theorem:lowerparaproduct4} by a weaker condition on the pair of weights $(\mu,\lambda)$.
\end{itemize}

\subsection{Outline}
This paper is organized as follows:
\begin{itemize}
    \item In Section \ref{sec:preliminaries}, we will discuss some preliminaries on dyadic analysis, weights, Calder\'on--Zygmund operators and paraproducts.
    \item In Section \ref{sec:upper} we will prove the upper bounds for both commutators and paraproducts using sparse domination techniques.
    \item In Section \ref{sec:lower} we will prove the lower bounds for both commutators and paraproducts.
    \item Finally, in Section \ref{sec:nonneccessary} we will give an example of a $b \in L^1_{\loc}$ which does not satisfy $\inf_c\nrmb{(b-c)\nu^{-1}}_{L^r(\nu)}<\infty$, but for which the commutator $[b,T]$ and the paraproduct $\Pi_b$ are bounded.
\end{itemize}

\section{Preliminaries}\label{sec:preliminaries}
\subsection{Notation}The standard or most used notation is summarized in the following table:

\begin{tabular}{c p{0.7\textwidth}}
$Q$ & Cube with sides parallel to the coordinate axes.\\
$\ell(Q)$ & Sidelength of cube $Q$.\\
$a Q$ & Cube concentric to cube $Q$ with sidelength $a \ell(Q)$.\\
$\cd$ & Dyadic lattice.\\
$\widehat{Q}$ & Dyadic parent: Smallest cube $R \in \mc{D}$ such that $Q \subsetneq R$.\\
$\angles{f}^\mu_Q$& Average: $\angles{f}^\mu_Q:=\frac{1}{\mu(Q)}\int_Q f \dmu.$ \\
$D_Qf$ & Difference of averages: $D_Qf:=\sum_{R: \widehat{R}=Q}\angles{f}_{R}1_{R}-\angles{f}_Q1_Q$. \\

\\

$p'$ & Conjugate exponent of $p$: $\frac{1}{p'}+\frac{1}{p}:=1$.\\
$\alpha$ & Exponent defined by $\frac{\alpha}{d}:=\frac{1}{p}-\frac{1}{q}$.\\
$r$ & Exponent defined by $\frac{1}{r}:=\frac{1}{q}-\frac{1}{p}$.\\
$\mu'$ & Dual weight of $\mu$ w.r.t. exponent $p$ defined by  $\mu':=\mu^{-p'/p}$.  \\
$\lambda'$ &  Dual weight of $\lambda$ w.r.t. exponent $q$ defined by $\lambda':=\lambda^{-q'/q}$.  \\
$\nu$ & Bloom weight defined by
$
\nu^{\frac{1}{p}+\frac{1}{q'}}:=\mu^{1/p} \lambda^{-1/q}.
$\\

\\

$\nrm{f}_{\BMO_\nu^\alpha}$ & $\nrm{f}_{\BMO_\nu^\alpha}:= \sup_Q \frac{1}{\nu(Q)^{1+\frac{\alpha}{d}}} \int_Q \abs{f - \ip{f}_Q  }\dx$\\
$\norm{f}_{L^p(\mu)}$ & Lebesgue norm: $\norm{f}_{L^p(\mu)}:=\left(\int_{\br^d} \abs{f}^p \dmu \right)^{1/p}$. \\

\\

$\Pi_bf$ & Dyadic paraproduct: $\Pi_bf:=\sum_{Q\in\cd} D_Qb \angles{f}_Q$.\\
$M^\mu f$ & Maximal operator: $M^\mu f:=\sup_Q \angles{\abs{f}}_Q^\mu 1_Q$. \\
$M_\nu^\# b$ & Weighted sharp maximal operator: $$\hspace{-2cm}M_\nu^\# b:=\sup_{Q} \frac{1_Q}{\nu(Q)}\int_Q |b-\angles{b}_Q| \dx.$$  \\
\end{tabular}

If $\mu$ is the Lebesgue measure, ``$\mu$'' is omitted from the notation; similarly with $\lambda$ and $\nu$. The supremum $\sup_Q$ is either over all cubes $Q$ or over all dyadic cubes $Q\in\cd$ in a dyadic lattice $\cd$, as understood from  the context.
We will make extensive use of the notation ``$\lesssim$'' to indicate inequalities up to an implicit multiplicative constant. These constants may depend on $p,q,d$ and also on e.g. the Calder\'on--Zygmund operator $T$  and properties of its kernel and sparseness constants, but not on any of the functions under consideration. If these implicit constants depend on the weights $\mu, \lambda, \nu$, this will be denoted by ``$\lesssim_{\mu,\lambda, \nu}$''.

\subsection{Dyadic lattices}
By a {\it cube} $Q$ we mean a cube in $\R^d$ with sides parallel to the coordinate axes.

\begin{definition}[Dyadic lattice] A collection of cubes $\cd$ is called {\it a dyadic lattice} on the Euclidean space $\br^d$ if $\cd=\bigcup_{k\in \bz} \cd_k$, where $\cd_k$ is a partition of the Euclidean space  $\br^d$ by cubes of side length $2^{-k}$ and the partition $\cd_{k+1}$ refines the partition $\cd_{k}$. The cubes $Q\in \cd$ of a dyadic lattice $\cd$ are called {\it dyadic cubes}. If $\cd$ is a dyadic lattice and $Q_0\in \cd$, we denote $$\cd(Q_0):=\{Q\in \cd: Q\subseteq Q_0\}.$$
\end{definition}
From the definition, it follows that the dyadic lattice $\cd$ is a countable collection of cubes with the {\it dyadic nestedness property}: $Q\cap R \in \{Q,R,\emptyset\}$ for all $Q,R\in\cd$. For $Q \in \cd$ we define its dyadic parent $\widehat{Q}$ to be the smallest $R \in \mc{D}$ such that $Q \subsetneq R$.

\begin{example}The dyadic lattice
$$
\cd^0:=\{2^{-k}([0,1)+j): k\in \bz, j \in \bz^d\}
$$
is called {\it the standard dyadic lattice} and the dyadic lattices
$$
\cd^\alpha:=\{2^{-k}([0,1)+j+(-1)^k \alpha ): k\in \bz, j \in \bz^d\} \quad \text{where $\alpha \in \{0,\tfrac{1}{3},\tfrac23\}^d$,}
$$
are examples of {\it shifted} dyadic lattices.
\end{example}
The geometric observation that generic cubes can be approximated by shifted dyadic cubes is known as {\it the one-third trick} \cite{okikiolu1992} and can be stated as follows (e.g. \cite[Lemma 3.2.26]{HNVW16}): 
\begin{lemma}[Shifted dyadic cubes approximate generic cubes]\label{lemma:onethird}For each cube $Q\subseteq \br^d$ there are $\alpha \in \{0,\frac{1}{3},\tfrac23\}^d$ and $R\in \cd^\alpha$ such that $Q\subseteq R$ and $\ell(R)\leq 3\ell(Q)$.
\end{lemma}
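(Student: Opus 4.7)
The plan is to reduce the $d$-dimensional statement to a one-dimensional statement via tensor products and then to carry out a pigeonhole argument on $\mathbb{R}/\mathbb{Z}$ for the single-coordinate case. Since $Q$ is a cube, every side has the same length $\ell = \ell(Q)$, so if for each coordinate $i=1,\dots,d$ I can find an interval $R_i$ of a single common scale $k$, belonging to some one-dimensional shifted dyadic lattice indexed by $\alpha_i \in \{0,\frac13,\frac23\}$, such that the $i$-th projection of $Q$ is contained in $R_i$ and $\ell(R_i) \leq 3\ell$, then the product $R = R_1\times\cdots\times R_d$ is a shifted dyadic cube in $\mathcal{D}^\alpha$ with $\alpha = (\alpha_1,\dots,\alpha_d) \in \{0,\frac13,\frac23\}^d$ that does the job.

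For the one-dimensional reduction, fix a generic interval $I=[a,a+\ell)$ and choose the integer $k$ so that $\frac{3\ell}{2} \leq 2^{-k} \leq 3\ell$ (such a $k$ always exists because the target range has ratio $2$). Write $t := 2^k \ell \in [\tfrac{1}{3},\tfrac{2}{3}]$ and $x := 2^k a \bmod 1$. For $\alpha \in \{0,\frac13,\frac23\}$, set $s_\alpha := (-1)^k \alpha \bmod 1$. A direct unpacking of the definition of $\mathcal{D}^\alpha$ shows that $I$ is contained in a single scale-$k$ interval of $\mathcal{D}^\alpha$ if and only if $\{x - s_\alpha\} + t \leq 1$, equivalently $\{x-s_\alpha\} \leq 1-t$.

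The key pigeonhole observation is that as $\alpha$ ranges over $\{0,\frac13,\frac23\}$, the three points $\{x-s_\alpha\}$ are equally spaced on the circle $\mathbb{R}/\mathbb{Z}$ at spacing $\frac13$, so at least one of them lies in $[0,\frac13]$. Since $1-t \geq \frac13$ by our choice of $k$, this means some $\alpha \in \{0,\frac13,\frac23\}$ satisfies $\{x-s_\alpha\} \leq 1-t$, and hence $I$ fits inside a single interval $R \in \mathcal{D}^\alpha$ of length $2^{-k} \leq 3\ell$. Applying this coordinatewise (with the same $k$, since $Q$ is a cube) and forming the product yields the desired $R\in\mathcal{D}^\alpha$ with $Q \subseteq R$ and $\ell(R)\leq 3\ell(Q)$.

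I do not anticipate any essential obstacles: the only point that requires a little care is the choice of $k$, which must be simultaneously coarse enough that $1-t \geq \frac13$ and fine enough that $2^{-k}\leq 3\ell$, but the factor-$2$ window $[\frac{3\ell}{2},3\ell]$ is wide enough to guarantee such a $k$ exists. Alternatively, one could shorten this argument by simply citing \cite[Lemma 3.2.26]{HNVW16} referenced in the statement.
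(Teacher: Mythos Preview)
The paper does not provide its own proof of this lemma; it simply states the result and cites \cite[Lemma 3.2.26]{HNVW16}. Your proposal is a correct, self-contained proof: the reduction to one dimension via products is valid (the same $k$ works in every coordinate since $Q$ is a cube, and the product of the resulting intervals is exactly a cube of $\mathcal{D}^{(\alpha_1,\dots,\alpha_d)}$ at scale $k$), and the pigeonhole step on $\mathbb{R}/\mathbb{Z}$ is accurate because the three residues $\{x-s_\alpha\}$ are spaced by exactly $\tfrac13$, forcing one of them into $[0,\tfrac13)\subseteq[0,1-t]$. Your closing remark already anticipates what the paper does, namely to defer to the cited reference rather than write out the argument.
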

Thanks to this approximation, dyadic and non-dyadic quantities are often comparable when measures are doubling. As an example that we will use in what follows, we can compare dyadic and non-dyadic sharp maximal functions.
\begin{corollary}[Dyadic and non-dyadic sharp maximal functions are comparable]\label{lemma:maximal_dyadic_nondyadic}Let $\nu$ be a doubling measure and $f\in L^1_{\mathrm{loc}}$. Then
$$
M^\#_\nu f(x)\eqsim_\nu \sup_{\alpha \in \{0,\frac{1}{3},\frac23\}^d} M^\#_{\nu,\cd^\alpha}f(x), \qquad x \in \R^d,
$$
where in $M^\#_{\nu,\cd^\alpha}f$ the defining supremum is taken over cubes $Q\in \cd^\alpha$.
\end{corollary}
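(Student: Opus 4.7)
The plan is to prove the two inequalities in $\eqsim_\nu$ separately; one is essentially trivial and the other follows from the one-third trick (Lemma \ref{lemma:onethird}) combined with the doubling property of $\nu$.

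\textbf{Easy direction.} For each $\alpha \in \{0,\tfrac{1}{3},\tfrac{2}{3}\}^d$, the dyadic lattice $\cd^\alpha$ is a subfamily of the collection of all cubes, so the defining supremum of $M^\#_{\nu,\cd^\alpha} f(x)$ is taken over a subset of the supremum defining $M^\#_\nu f(x)$. Hence $\sup_\alpha M^\#_{\nu,\cd^\alpha} f(x) \leq M^\#_\nu f(x)$ pointwise.

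\textbf{Hard direction.} Fix $x \in \R^d$ and a cube $Q \ni x$. By Lemma \ref{lemma:onethird} there exist $\alpha \in \{0,\tfrac13,\tfrac23\}^d$ and $R \in \cd^\alpha$ with $Q \subseteq R$ and $\ell(R) \leq 3\ell(Q)$. The key input is the standard ``best constant approximation up to a factor of two'' inequality: for any constant $c$,
\[
\int_Q \abs{f - \angles{f}_Q} \dx \leq 2 \int_Q \abs{f-c} \dx.
\]
Taking $c = \angles{f}_R$ and enlarging the domain of integration from $Q$ to $R$ yields
\[
\int_Q \abs{f - \angles{f}_Q} \dx \leq 2 \int_R \abs{f - \angles{f}_R} \dx.
\]

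\textbf{Using doubling.} Since $Q \subseteq R$ and $\ell(R) \leq 3\ell(Q)$, the cube $R$ is contained in a fixed dilate $c_d Q$ of $Q$ (with $c_d$ depending only on the dimension). The doubling property of $\nu$ then gives $\nu(R) \leq \nu(c_d Q) \lesssim_\nu \nu(Q)$. Dividing the previous inequality by $\nu(Q)$, we obtain
\[
\frac{\ind_Q(x)}{\nu(Q)} \int_Q \abs{f - \angles{f}_Q} \dx \lesssim_\nu \frac{\ind_R(x)}{\nu(R)} \int_R \abs{f - \angles{f}_R} \dx \leq M^\#_{\nu,\cd^\alpha} f(x),
\]
where we used $x \in Q \subseteq R$. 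Taking the supremum over all cubes $Q \ni x$ yields $M^\#_\nu f(x) \lesssim_\nu \sup_\alpha M^\#_{\nu,\cd^\alpha} f(x)$, which completes the proof.

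There is no real obstacle here; the only point that requires a moment of care is the geometric step showing $R \subseteq c_d Q$, which is needed to invoke doubling (doubling alone does not compare $\nu(R)$ and $\nu(Q)$ merely from $Q \subseteq R$). Everything else is a direct assembly of Lemma \ref{lemma:onethird}, the best-constant-approximation inequality, and the doubling hypothesis.
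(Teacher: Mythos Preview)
Your proof is correct and follows essentially the same approach as the paper: both use the one-third trick to pass from a general cube $Q$ to a dyadic $R \supseteq Q$ with $\ell(R) \leq 3\ell(Q)$, the best-constant inequality $\int_Q|f-\angles{f}_Q|\dx \leq 2\int_R|f-\angles{f}_R|\dx$, and doubling via the geometric observation $R \subseteq c_d Q$ (the paper makes this explicit as $R \subseteq 5Q$). Your treatment additionally spells out the trivial direction, which the paper leaves implicit.
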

\begin{proof}Let $Q$ be a cube. Then there are $\alpha \in \{0,\frac{1}{3},\frac23\}^d$ and $R\in \cd^\alpha$ such that $Q\subseteq R$ and $\ell(R)\leq 3\ell(Q)$. The observation that $R\subseteq 5Q$ together with the assumption that $\nu$ is doubling gives
$$
\frac{1}{\nu(Q)}=\frac{\nu(5Q)}{\nu(Q)} \frac{1}{\nu(5Q)}\leq C_\nu \frac{1}{\nu(R)}.
$$
Moreover, because $Q\subseteq R$, we obtain 
$$
\int_Q |b-\angles{b}_Q| \dx\leq 2 \inf_c \int_Q |b-c| \dx\leq  2 \inf_c \int_R |b-c| \dx \leq 2 \int_R |b-\angles{b}_R| \dx,
$$
which finishes the proof.
\end{proof}

\subsection{Dyadic analysis} In what follows, we introduce Carleson and sparse collections, the basic estimates for them, and the related theory of the $A_\infty$ measures. The results are well-known but perhaps somewhat scattered, so we hope that the systematic exposition in this section will be convenient for the reader.

Throughout this section an arbitrary dyadic lattice $\cd$ is fixed; for brevity ``$\cd$'' is suppressed in most notation.
A basic tool in dyadic analysis is the well-known Hardy--Littlewood maximal inequality. The {\it dyadic Hardy--Littlewood maximal operator} $M^\mu$ is defined by
$$
M^\mu f:=\sup_{Q\in\cd} \angles{\abs{f}}^\mu_Q 1_Q. 
$$
\begin{lemma}[Dyadic Hardy--Littlewood maximal inequality]Let $\mu$ be a locally finite Borel measure and $p\in(1,\infty)$. Then
$$
\norm{M^\mu}_{L^p(\mu)}\leq p' \norm{f}_{L^p(\mu)}
$$
for all $f\in L^p(\mu)$.
\end{lemma}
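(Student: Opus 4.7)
The plan is to run the classical layer cake argument, which for dyadic maximal operators yields the sharp constant $p'$ essentially by construction. Two ingredients are needed: a precise weak-type inequality on each level set, and H\"older's inequality.

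First, I would fix $\lambda>0$ and analyze $E_\lambda:=\{M^\mu f>\lambda\}$. By the definition of $M^\mu$ and the dyadic nestedness property, every point of $E_\lambda$ is contained in some $Q\in\cd$ with $\angles{|f|}^\mu_Q>\lambda$, and by the local finiteness of $\mu$ (together with $f\in L^p(\mu)$) these cubes have uniformly bounded measure, so each such $Q$ is contained in a maximal such cube. Let $\{Q_j\}$ be the collection of these maximal dyadic cubes; by maximality and dyadic nestedness, they are pairwise disjoint and $E_\lambda=\bigsqcup_j Q_j$. Consequently,
\begin{equation*}
\lambda\,\mu(E_\lambda)=\sum_j \lambda\,\mu(Q_j)\le \sum_j \int_{Q_j}|f|\dmu=\int_{E_\lambda}|f|\dmu.
\end{equation*}

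Next, I would use the layer cake formula together with Fubini:
\begin{align*}
\norm{M^\mu f}_{L^p(\mu)}^p
&=p\int_0^\infty \lambda^{p-1}\mu(E_\lambda)\dlambda
\le p\int_0^\infty \lambda^{p-2}\int_{E_\lambda}|f|\dmu\dlambda\\
&=p\int_{\R^d}|f(x)|\int_0^{M^\mu f(x)}\lambda^{p-2}\dlambda\dmu(x)
=p'\int_{\R^d}|f(x)|\,(M^\mu f(x))^{p-1}\dmu(x).
\end{align*}
Applying H\"older's inequality with exponents $p$ and $p'$, noting that $\|(M^\mu f)^{p-1}\|_{L^{p'}(\mu)}=\|M^\mu f\|_{L^p(\mu)}^{p-1}$ since $(p-1)p'=p$, gives
\begin{equation*}
\norm{M^\mu f}_{L^p(\mu)}^p \le p'\,\norm{f}_{L^p(\mu)}\,\norm{M^\mu f}_{L^p(\mu)}^{p-1}.
\end{equation*}
Dividing by the last factor (after a standard truncation argument to justify that this quantity is finite, e.g.\ applying the estimate first to $f\,\mathbf 1_{Q_0}$ with $Q_0\in\cd$ increasing to $\R^d$, or replacing $M^\mu f$ by $M^\mu f\wedge N$) yields the claim.

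The one step that needs some care is the extraction of the disjoint maximal cubes $\{Q_j\}$: one must ensure that no cube $Q$ with $\angles{|f|}^\mu_Q>\lambda$ is contained in arbitrarily large cubes with the same property, which uses $\mu(Q)<\infty$ for the cubes in question (via local finiteness) together with $f\in L^p(\mu)\subseteq L^1_{\loc}(\mu)$. After that, everything is routine and the sharp constant $p'$ drops out of the integral $\int_0^{M^\mu f(x)}\lambda^{p-2}\dlambda=(p-1)^{-1}(M^\mu f(x))^{p-1}$.
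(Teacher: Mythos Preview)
The paper does not supply a proof of this lemma; it is simply stated as a well-known fact and used as input for the Carleson embedding theorem that follows. Your argument is the standard one and is correct: the sharp weak-type bound $\lambda\,\mu(E_\lambda)\le\int_{E_\lambda}|f|\dmu$ obtained from the disjoint maximal dyadic cubes, combined with the layer-cake computation and H\"older's inequality, yields exactly the constant $p'$. The only point worth tightening is the justification for the existence of maximal cubes: rather than saying the cubes have ``uniformly bounded measure'', the clean way is to note that for $f\in L^p(\mu)$ and any $Q\in\cd$ one has $\angles{|f|}^\mu_Q\le \|f\|_{L^p(\mu)}\mu(Q)^{-1/p}$ by H\"older, so along any increasing chain of cubes with $\mu(Q)\to\infty$ the averages tend to zero; if instead the chain has bounded $\mu$-measure, one reduces to the compactly supported case via the truncation you already indicated. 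With that small clarification your proof is complete and matches the classical argument the paper is implicitly invoking.
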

The Hardy--Littlewood maximal inequality and the Carleson embedding theorem are closely tied: each can be derived from the other. A family $\{s_Q\}_{Q\in \cd}$ of complex numbers indexed by dyadic cubes satisfies {\it the $\mu$-Carleson packing condition} if there is a constant $C>0$ such that $$
\sum_{Q'\in \cd: Q'\subseteq Q} |s_{Q'}|\mu(Q')\leq C \mu(Q)
$$
for all cubes $Q\in\cd$. The least constant in the estimate is called {\it the  Carleson norm} and is denoted by $\norm{s}_{\mathrm{Car} (\mu)}$, i.e.
$$\norm{s}_{\mathrm{Car} (\mu)}:=\sup_{Q\in\cd} \frac{1}{\mu(Q)}\sum_{Q'\in \cd: Q'\subseteq Q} |s_{Q'}|\mu(Q').$$

\begin{lemma}[Dyadic Carleson embedding theorem]\label{lemma:dyadic_carleson_embedding}Let $\mu$ be a locally finite Borel measure and let $p\in(1,\infty)$. Then 
$$
\has{ \sum_{Q\in \cd} |\angles{f}^\mu_Q|^p s_Q \mu(Q) }^{1/p}\leq p'  \norm{s}_{\mathrm{Car}(\mu)}^{1/p} \norm{f}_{L^p(\mu)}
$$
for all $\{s_Q\}_{Q\in\cd}\in \mathrm{Car}(\mu)$ and $f\in L^p(\mu)$.
\end{lemma}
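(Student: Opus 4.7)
The plan is to combine a layer-cake decomposition of the left-hand side with the Hardy--Littlewood maximal inequality stated just before the lemma. By splitting $f$ into real and imaginary, and then positive and negative parts, it suffices to treat nonnegative $f$, in which case $|\langle f \rangle^\mu_Q| = \langle f \rangle^\mu_Q$.

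First, I would write
\begin{equation*}
\sum_{Q\in\cd} (\angles{f}^\mu_Q)^p s_Q \mu(Q) = p \int_0^\infty \lambda^{p-1} \Bigl(\sum_{Q\in\cd\,:\, \angles{f}^\mu_Q > \lambda} s_Q\, \mu(Q)\Bigr) \dlambda.
\end{equation*}
For each fixed $\lambda > 0$, let $\cq^\lambda$ be the collection of dyadic cubes that are maximal (with respect to inclusion in $\cd$) among those satisfying $\angles{f}^\mu_Q > \lambda$. The cubes in $\cq^\lambda$ are pairwise disjoint and every $Q$ appearing in the inner sum is contained in some $Q' \in \cq^\lambda$. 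Thus, invoking the $\mu$-Carleson packing condition on each $Q' \in \cq^\lambda$ and summing,
\begin{equation*}
\sum_{Q\,:\, \angles{f}^\mu_Q > \lambda} s_Q\, \mu(Q) \leq \sum_{Q' \in \cq^\lambda} \sum_{Q \subseteq Q'} s_Q\, \mu(Q) \leq \norm{s}_{\mathrm{Car}(\mu)} \sum_{Q' \in \cq^\lambda} \mu(Q').
\end{equation*}

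Next, I would observe that on each $Q' \in \cq^\lambda$ the dyadic maximal operator satisfies $M^\mu f \geq \angles{f}^\mu_{Q'} > \lambda$, so $\bigcup_{Q' \in \cq^\lambda} Q' \subseteq \{M^\mu f > \lambda\}$ and, using disjointness,
\begin{equation*}
\sum_{Q' \in \cq^\lambda} \mu(Q') = \mu\Bigl(\bigcup_{Q' \in \cq^\lambda} Q'\Bigr) \leq \mu(\{M^\mu f > \lambda\}).
\end{equation*}
Feeding this back into the layer-cake identity and applying the layer-cake identity in reverse yields
\begin{equation*}
\sum_{Q\in\cd} (\angles{f}^\mu_Q)^p s_Q \mu(Q) \leq \norm{s}_{\mathrm{Car}(\mu)} \norm{M^\mu f}_{L^p(\mu)}^p.
\end{equation*}
Finally, the Hardy--Littlewood maximal inequality $\norm{M^\mu f}_{L^p(\mu)} \leq p'\norm{f}_{L^p(\mu)}$ stated just above the lemma gives the claim after taking $p$-th roots.

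There is no real obstacle here; the only subtlety is making sure that the maximal cubes $\cq^\lambda$ exist, which is guaranteed for each $\lambda>0$ provided $f \in L^p(\mu)$ (so that the averages $\angles{f}^\mu_Q$ tend to $0$ as $\ell(Q)\to\infty$). Alternatively, one could run a principal-cube stopping-time argument on the averages $\angles{f}^\mu_Q$, but the layer-cake route above is quickest and directly exposes why the constant $p'$ in the conclusion matches the one in the maximal inequality.
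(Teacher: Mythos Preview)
The paper does not actually supply a proof of this lemma; it is stated as a standard fact, with the remark just before it that ``the Hardy--Littlewood maximal inequality and the Carleson embedding theorem are closely tied: each can be derived from the other.'' Your argument is correct and is exactly the derivation the paper alludes to: Carleson embedding from the $L^p(\mu)$-bound for $M^\mu$.

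Two small cosmetic points. First, the reduction to nonnegative $f$ is more cleanly done via $|\langle f\rangle^\mu_Q|\le \langle|f|\rangle^\mu_Q$ than by splitting into real/imaginary and positive/negative parts (the latter could cost a factor of $4$ if one is not careful). Second, your remark that the averages tend to $0$ as $\ell(Q)\to\infty$ need not hold when $\mu(\R^d)<\infty$; the robust fix is to prove the estimate first for a truncated lattice $\{Q\in\cd:2^{-M}\le \ell(Q)\le 2^N\}$, where maximal cubes trivially exist, and then let $M,N\to\infty$ by monotone convergence. Neither point affects the constant $p'$ or the structure of your proof.
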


In this context the $A_\infty$-class enters because it is precisely the class that preserves the Carleson packing condition.
A locally finite Borel measure $w$ is said to satisfy the {\it  dyadic Fujii--Wilson $A_\infty$-condition} with respect to a locally finite Borel measure $\mu$, and this is denoted by $w \in A_\infty(\mu)$, if
\begin{equation}\label{eq:temp34}
\int_Q \sup_{R\in \cd(Q):x \in R} \frac{w(R)}{\mu(R)}\dmu(x)\leq C w(Q)
\end{equation}
for all cubes $Q\in \cd$. The least admissible constant $C$ is denoted by $[w]_{A_\infty (\mu)}$ and is called {\it the dyadic Fujii-Wilson $A_\infty$-characteristic}. 

The following lemma can be found, for example, in \cite[Proposition 3.7]{hanninen2021}.

\begin{lemma}[Carleson condition is preserved by $A_\infty$ measures]\label{lemma:carleson_operator_norm}Let $w,\mu$ be locally finite Borel measures. Then
$$
[w]_{A_\infty(\mu)}=\sup_{s:=\{s_Q\}_{Q\in\cd}}\frac{\norm{s}_{\mathrm{Car} (w)}}{\norm{s}_{\mathrm{Car} (\mu)}}.
$$
\end{lemma}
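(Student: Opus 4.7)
The plan is to prove the equality by establishing each inequality separately, using a layer-cake argument for the upper bound on the supremum and linear-programming duality for the lower bound.

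For the inequality $\sup_s \frac{\norm{s}_{\mathrm{Car}(w)}}{\norm{s}_{\mathrm{Car}(\mu)}} \le [w]_{A_\infty(\mu)}$, I would fix $Q_0 \in \cd$ and a non-negative Carleson sequence $s$ supported in $\cd(Q_0)$. Writing $\alpha_R := w(R)/\mu(R)$, the layer-cake identity gives $\sum_R s_R w(R) = \int_0^\infty \sum_{R \in \cd(Q_0):\, \alpha_R > \lambda} s_R \mu(R)\, \dlambda$. At each level $\lambda$, the cubes with $\alpha_R > \lambda$ are contained in the disjoint union of the maximal such cubes $\{R_j^\lambda\}_j$ in $\cd(Q_0)$, which coincides with the super-level set $A_\lambda := \{x \in Q_0: M_\mu^{\cd(Q_0)}(w)(x) > \lambda\}$. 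Applying the $\mu$-Carleson property inside each $R_j^\lambda$ separately bounds the inner sum by $\norm{s}_{\mathrm{Car}(\mu)} \mu(A_\lambda)$, so integrating back and invoking the very definition of $[w]_{A_\infty(\mu)}$ yields $\sum_R s_R w(R) \le \norm{s}_{\mathrm{Car}(\mu)} \int_{Q_0} M_\mu^{\cd(Q_0)}(w)\, \dmu \le [w]_{A_\infty(\mu)} \norm{s}_{\mathrm{Car}(\mu)} w(Q_0)$. Dividing by $w(Q_0)$ and taking the supremum over $Q_0$ gives the bound.

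For the reverse inequality $[w]_{A_\infty(\mu)} \le \sup_s \frac{\norm{s}_{\mathrm{Car}(w)}}{\norm{s}_{\mathrm{Car}(\mu)}}$, given $Q_0$ and $\varepsilon > 0$ I need a Carleson sequence $s$ on $\cd(Q_0)$ making $\sum_R s_R w(R)/(\norm{s}_{\mathrm{Car}(\mu)} w(Q_0))$ within $\varepsilon$ of $\int_{Q_0} M_\mu^{\cd(Q_0)}(w)\, \dmu / w(Q_0)$. I would truncate $\cd(Q_0)$ to a finite sublattice and apply linear-programming duality: the primal problem $\max \sum_R s_R w(R)$ subject to $s \ge 0$ and $\sum_{R \subseteq Q'} s_R \mu(R) \le \mu(Q')$ for every $Q'$ admits the dual $\min \sum_{Q'} y_{Q'} \mu(Q')$ subject to $y \ge 0$ and $\sum_{Q' \supseteq R} y_{Q'} \ge \alpha_R$ for every $R$. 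Setting $f_y(x) := \sum_{Q' \ni x} y_{Q'}$, the dual constraint sharpens to $f_y(x) \ge M_\mu^{\cd(Q_0)}(w)(x)$ pointwise, so the dual optimum is exactly $\int_{Q_0} M_\mu^{\cd(Q_0)}(w)\, \dmu$, attained by a telescoping $y$ supported on principal cubes (e.g.\ a choice with $y_{Q'}$ equal to the increment of $\alpha$ relative to the nearest $\cf$-ancestor). Strong LP duality delivers the required near-extremal primal $s$, and passage to the limit as the truncation is removed completes the argument.

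The hard part is precisely this reverse direction. A naive single-threshold test $s_R = \mathbf{1}_{R \in \{R_j^\lambda\}_j}$ contributes only $\lambda \mu(A_\lambda)$ to the numerator, and its supremum over $\lambda$ is strictly smaller than the full integral $\int_0^\infty \mu(A_\lambda)\, \dlambda = \int_{Q_0} M_\mu^{\cd(Q_0)}(w)\, \dmu$ in general. Capturing the full integral forces one to combine infinitely many scales with the right relative weights, and LP duality (or, equivalently, an explicit principal-cube construction weighting each record-setting cube by the corresponding jump of $\alpha_R$) is the natural device that does this cleanly.
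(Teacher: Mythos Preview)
Your argument is correct in both directions. The layer-cake decomposition combined with the $\mu$-Carleson bound inside each maximal cube $R_j^\lambda$ is the standard route for the inequality $\norm{s}_{\mathrm{Car}(w)}\le[w]_{A_\infty(\mu)}\norm{s}_{\mathrm{Car}(\mu)}$. For the reverse inequality, your LP duality framing is valid: in the finitely truncated lattice the dual optimum is exactly $\int_{Q_0} M_\mu^{\cd(Q_0),\le N}(w)\,\dmu$ (the telescoping choice $y_{Q'}=\beta_{Q'}-\beta_{\widehat{Q'}}$ with $\beta_R:=\max_{R'\supseteq R}\alpha_{R'}$ realises it), strong duality produces a matching primal $s$ supported on the truncated lattice with $\norm{s}_{\mathrm{Car}(\mu)}\le 1$, and monotone convergence lets $N\to\infty$. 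One small wording point: the dual constraints imply $f_y\ge M_\mu^{\cd(Q_0)}(w)$ but are not equivalent to it; this is harmless since you only need the lower bound on the dual objective together with the explicit attaining $y$.

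Note, however, that the paper does not give its own proof of this lemma: it simply cites \cite[Proposition 3.7]{hanninen2021}. So there is no in-paper argument to compare against. Your proof stands on its own; the LP-duality presentation of the lower bound is a clean abstract repackaging of the principal-cubes construction one usually sees in this context.
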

This fact, combined with Lemma \ref{lemma:dyadic_carleson_embedding}, implies the weighted version of the Carleson embedding theorem:
\begin{lemma}[Weighted version of the dyadic Carleson embedding theorem]\label{lemma:dyadiccarlesonweight} Let $\mu$ be a locally finite Borel measure,  $w\in A_\infty(\mu)$ and let $p\in(1,\infty)$. Then
$$
\has{ \sum_{Q\in \cd} |\angles{f}^w_Q|^p s_Q w(Q) }^{1/p}\leq p'  [w]_{A_\infty(\mu)}^{1/p}\norm{s}_{\mathrm{Car} (\mu)}^{1/p} \norm{f}_{L^p(w)}
$$
for all $\{s_Q\}_{Q\in\cd}\in \mathrm{Car}(\mu)$ and $f\in L^p(w)$.
\end{lemma}

A collection $\cs$ of dyadic cubes is called {\it $(C,\mu)$-Carleson} if there exists a constant $C>0$ such that $$
\sum_{S'\in \cs: S'\subseteq S} \mu(S')\leq C \mu(S)
$$
for all $S\in\cs$. The associated Carleson norm is given by
$$
\norm{\cs}_{\mathrm{Car} (\mu)}:=\norm{1_\cs(Q)}_{\mathrm{Car} (\mu)}=\sup_{S\in\cs} \frac{1}{\mu(S)}\sum_{S'\in \cs: S'\subseteq S}\mu(S').
$$
Thus, a Carleson collection $\cs$ of dyadic cubes is a special case of a Carleson family of complex numbers via the correspondence $s_Q:=1_{\cs}(Q)$.  Iterated stopping time arguments typically generate sparse collections, which are collections of large disjoint parts and hence in particular satisfy the Carleson packing condition.
\begin{definition}[Sparseness]
Let $\mu$ be a locally finite Borel measure and let $\gamma\in(0,1)$. A collection $\cs$ of dyadic cubes is called $(\gamma,\mu)$-sparse if for each $S\in\cs$ there exists $E_S\subseteq S$ such that $\mu(E_S)\geq \gamma \mu(S)$ and such that the sets $\{E_S\}_{S\in\cs}$ are disjoint. If $\mu$ is the Lebesgue measure, it is omitted from the notation.
\end{definition}
 Every $\gamma$-sparse collection $\cs$ satisfies the $\gamma^{-1}$-Carleson packing condition:
$$
\norm{\cs}_{\mathrm{Car} (\mu)}:=\sup_{S\in\cs} \frac{1}{\mu(S)}\sum_{S'\in\cs: S'\subseteq S}\mu(S')\leq \frac{1}{\gamma} \sup_{S\in\cs} \frac{1}{\mu(S)}\sum_{S'\in\cs: S'\subseteq S}\mu(E_{S'})\leq \frac{1}{\gamma}.
$$
 An obstruction for the converse is a point mass: a mass point can not be divided between two sets. Excluding this obstruction, the converse also holds (see \cite[Corollary 6]{verbitsky1996}):
\begin{lemma}[Carleson and sparse are equivalent]\label{lemma:carleson_implies_sparse}Let $\mu$ be a locally finite Borel measure with no point masses. Then each $(C,\mu)$-Carleson collection of dyadic cubes is $(\tfrac1C,\mu)$-sparse. 
\end{lemma}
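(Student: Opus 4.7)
The starting point is to rewrite the $(C,\mu)$-Carleson condition by separating the diagonal term: for every $S \in \cs$,
\begin{equation*}
\sum_{T \in \cs,\, T \subsetneq S} \mu(T) \leq (C-1)\,\mu(S).
\end{equation*}
Multiplying by $\tfrac{1}{C}$, this expresses that within each $S \in \cs$ one can hope to accommodate pairwise disjoint sets $E_T$ of size $\tfrac{1}{C}\mu(T)$, one per proper descendant $T \in \cs$, using at most $(1-\tfrac{1}{C})\mu(S)$ of $\mu|_S$ and leaving the required $\tfrac{1}{C}\mu(S)$ for $E_S$ itself. This motivates a greedy, smallest-first construction of the sparse subsets.

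I would first carry this out for a finite subcollection $\cs' \subseteq \cs$, where the inclusion poset is trivially well-founded. Enumerate $\cs' = \{S_1,\dots,S_N\}$ with $S_i \subsetneq S_j \Rightarrow i<j$, and inductively pick a Borel set
\begin{equation*}
E_{S_n} \subseteq S_n \setminus \bigcup_{m<n,\, S_m \subsetneq S_n} E_{S_m}, \qquad \mu(E_{S_n}) = \tfrac{1}{C}\mu(S_n).
\end{equation*}
Feasibility at step $n$ follows from the rearranged Carleson bound: by the induction hypothesis $\mu(E_{S_m}) = \tfrac{1}{C}\mu(S_m)$ and the disjointness of the previously chosen $E_{S_m}$,
\begin{equation*}
\mu\bigl(S_n \setminus \textstyle\bigcup_{m<n,\,S_m \subsetneq S_n} E_{S_m}\bigr) \geq \mu(S_n) - \tfrac{1}{C}\!\!\sum_{T \in \cs,\, T \subsetneq S_n}\!\! \mu(T) \geq \tfrac{1}{C}\mu(S_n),
\end{equation*}
and the existence of a Borel subset of prescribed $\mu$-measure is precisely where the no-point-masses hypothesis enters, via an intermediate-value property for non-atomic measures. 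Disjointness against larger cubes $S_m \supsetneq S_n$ with $m>n$ will be enforced when those cubes are in turn processed, and disjointness against $\cs$-incomparable cubes is automatic by dyadic nestedness.

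The main obstacle is that the inclusion poset of the full collection $\cs$ need not be well-founded, since a cube in $\cs$ may have infinitely many strictly nested proper descendants; hence no global smallest-first enumeration is available. I would bridge this by exhausting $\cs = \bigcup_n \cs^{(n)}$ by finite subcollections, applying the finite construction above to each, and passing to a limit. Weak-$*$ compactness of the unit ball of $L^\infty(\mu)$ combined with diagonalisation (possible since $\cs$ is countable) yields densities $g_S$ with $0 \leq g_S \leq \mathbf{1}_S$, $\int g_S \dmu \geq \tfrac{1}{C}\mu(S)$, and $\sum_{S \in \cs} g_S \leq 1$ almost everywhere, obtained as weak-$*$ limits of $\mathbf{1}_{E_S^{(n)}}$. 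A final measurable-selection argument, again exploiting the non-atomicity of $\mu$, converts these fractional densities into genuinely disjoint Borel sets $E_S \subseteq S$ with $\mu(E_S) \geq \tfrac{1}{C}\mu(S)$, establishing the claimed $(\tfrac{1}{C},\mu)$-sparseness.
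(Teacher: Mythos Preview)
The paper does not give its own proof of this lemma; it quotes the result from \cite{verbitsky1996} and points to several alternative arguments in \cite{LN15,cascante2017,hanninen2018,barron2019,rey2022}. So there is no in-paper proof to compare against, and your write-up must stand on its own.

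Your finite-collection argument is correct and cleanly presented, and the passage to the infinite case via weak-$*$ sequential compactness of the unit ball of $L^\infty(\mu)$ (legitimate here since $L^1(\mu)$ is separable for a locally finite Borel $\mu$ on $\R^d$) together with a diagonalisation over the countable family $\cs$ does produce densities $g_S$ with $0\le g_S\le \ind_S$, $\int g_S\,d\mu=\tfrac1C\mu(S)$ and $\sum_{S\in\cs} g_S\le 1$ almost everywhere.

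The gap is the last sentence. Converting these fractional densities into genuinely disjoint Borel sets $E_S\subseteq S$ with $\mu(E_S)\ge \tfrac1C\mu(S)$ is not a routine ``measurable-selection argument''; it is essentially the whole content of the lemma repackaged. The natural construction is to pass to the product $(\R^d\times[0,1],\mu\times\mathrm{Leb})$ and layer the $g_S$ to obtain disjoint $\tilde E_S\subseteq S\times[0,1]$ of the correct product measure, but to return to $\R^d$ one then needs a measure isomorphism $(\R^d\times[0,1],\mu\times\mathrm{Leb})\to(\R^d,\mu)$ that sends $S\times[0,1]$ into $S$ for \emph{every} dyadic $S$, i.e.\ one compatible with the full dyadic filtration. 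Such an isomorphism can be built level by level from the non-atomicity of $\mu$ (each cube is isomorphic to an interval, and one chooses the isomorphisms coherently along the tree), but this is exactly the kind of construction that the cited references carry out directly, and it is not shorter than those proofs. Alternatively one can appeal to a countable Lyapunov-type statement, but that too requires justification beyond the finite case. As written, the hard step has been moved rather than eliminated; you should either spell out the filtration-compatible isomorphism or cite one of the listed references at this point.
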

For different proofs and generalizations, see \cite{barron2019,cascante2017,hanninen2018,HL25, LN15, rey2022}.
As an immediate corollary of the equivalence of being Carleson and being sparse, we see that $A_\infty$-measures do not only preserve Carleson, but also sparseness:
\begin{corollary}[$A_\infty$-measures preserve sparseness] \label{cor:Ainftysparse} Let $\mu$ and $w$ be locally finite Borel measures with no point masses. Assume that $w\in A_\infty(\mu)$. Then every $(\gamma,\mu)$-sparse collection is $(\gamma[w]_{A_\infty(\mu)}^{-1},w)$-sparse.
\end{corollary}
\begin{proof}The statement follows by combining Lemma \ref{lemma:carleson_operator_norm} and Lemma \ref{lemma:carleson_implies_sparse}.
\end{proof}

 For many purposes Carleson (or in particular sparse) collections behave like disjoint collections. An instance of this is captured by the following lemma:
\begin{lemma}\label{lemma:basic_lemma_carleson}Let $\mu$ be a locally finite Borel measure. Assume that $\cs$ is a $\mu$-Carleson (or in particular $\mu$-sparse) collection of dyadic cubes. Then
$$
\nrms{\sum_{S\in\cs} a_S 1_S}_{L^p(\mu)}\leq p \norm{\cs}_{\mathrm{Car} (\mu)}^{1/p'}  \has{\sum_{S\in\cs} |a_S|^p \mu(S)}^{1/p}
$$
for all families $\{a_S\}_{S\in\cs}$ of complex numbers.
\end{lemma}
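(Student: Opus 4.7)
The plan is to prove this by duality against $L^{p'}(\mu)$, reducing the estimate to the (already-stated) weighted/unweighted Carleson embedding theorem (Lemma \ref{lemma:dyadic_carleson_embedding}). Without loss of generality we may assume $a_S\geq 0$.

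First I would write
\begin{equation*}
\nrms{\sum_{S\in\cs} a_S 1_S}_{L^p(\mu)} = \sup_{g} \int_{\R^d} g \sum_{S\in\cs} a_S 1_S \dmu = \sup_g \sum_{S\in\cs} a_S \, \ip{g}^\mu_S\, \mu(S),
\end{equation*}
where the supremum is taken over non-negative $g$ with $\nrm{g}_{L^{p'}(\mu)}\leq 1$ and the second identity uses $\int_S g\dmu = \ip{g}^\mu_S \mu(S)$.

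Next, I would split the factors by H\"older's inequality with exponents $p$ and $p'$ applied to $(a_S\mu(S)^{1/p})\cdot(\ip{g}^\mu_S \mu(S)^{1/p'})$, obtaining
\begin{equation*}
\sum_{S\in\cs} a_S \ip{g}^\mu_S \mu(S) \leq \has{\sum_{S\in\cs} a_S^p \mu(S)}^{1/p} \has{\sum_{S\in\cs} \abs{\ip{g}^\mu_S}^{p'} \mu(S)}^{1/p'}.
\end{equation*}

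Finally, I would apply the dyadic Carleson embedding theorem (Lemma \ref{lemma:dyadic_carleson_embedding}) with the exponent $p'$ and the choice of Carleson coefficients $s_Q := 1_\cs(Q)$, noting that by definition $\norm{s}_{\mathrm{Car}(\mu)} = \norm{\cs}_{\mathrm{Car}(\mu)}$. This yields
\begin{equation*}
\has{\sum_{S\in\cs} \abs{\ip{g}^\mu_S}^{p'} \mu(S)}^{1/p'} \leq (p')' \norm{\cs}_{\mathrm{Car}(\mu)}^{1/p'} \nrm{g}_{L^{p'}(\mu)} = p \norm{\cs}_{\mathrm{Car}(\mu)}^{1/p'}.
\end{equation*}
Combining these three steps and taking the supremum over $g$ gives the claimed inequality.

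There is no real obstacle; the only point requiring care is to apply the Carleson embedding with exponent $p'$ (so that the constant $(p')' = p$ comes out on the correct side) and to identify the Carleson norm of the indicator family $\{1_\cs(Q)\}_{Q\in\cd}$ with $\norm{\cs}_{\mathrm{Car}(\mu)}$ as given in the definition preceding the lemma.
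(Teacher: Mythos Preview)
Your proof is correct and follows essentially the same approach as the paper: dualize against $g\in L^{p'}(\mu)$, apply H\"older's inequality to the resulting sum $\sum_{S}a_S\ip{g}^\mu_S\mu(S)$, and then invoke the dyadic Carleson embedding theorem (Lemma \ref{lemma:dyadic_carleson_embedding}) with exponent $p'$ and coefficients $s_Q=1_\cs(Q)$. Your observation that $(p')'=p$ gives the stated constant is exactly the point.
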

\begin{proof}By H\"older's inequality, we have
$$
\int_{\R^d} \has{\sum_{S\in\cs} a_S 1_S}g \dmu\leq \has{\sum_{S\in\cs} |a_S|^p \mu(S)}^{1/p} \has{\sum_{S\in\cs} (\angles{|g|}_{S}^\mu)^{p'} \mu(S)}^{1/p'}.
$$
From this the statement follows by $L^p-L^{p'}$ duality and the dyadic Carleson embedding theorem (Lemma \ref{lemma:dyadic_carleson_embedding}).
\end{proof}
As before, the fact that an $A_\infty$-measure preserves the Carleson packing condition leads to a weighted version of this lemma:
\begin{lemma}\label{lemma:weighted_basic}Let $\mu$ be a locally finite Borel measure and $w\in A_\infty(\mu)$. Assume that $\cs$ is a $\mu$-Carleson (or, in particular, $\mu$-sparse) collection of dyadic cubes. Then
$$
\nrms{\sum_{S\in\cs} a_S 1_S}_{L^p(w)}\leq p [w]_{A_\infty(\mu)} ^{1/p'}\norm{\cs}_{\mathrm{Car}^\cd (\mu)}^{1/p'}  \has{\sum_{S\in\cs} |a_S|^p w(S)}^{1/p}.
$$
\end{lemma}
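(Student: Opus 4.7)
The proof parallels that of Lemma \ref{lemma:basic_lemma_carleson}, with the unweighted Carleson embedding replaced by its weighted counterpart (Lemma \ref{lemma:dyadiccarlesonweight}). The plan is to use $L^p(w)$--$L^{p'}(w)$ duality: pick $g \in L^{p'}(w)$ with $\norm{g}_{L^{p'}(w)} \leq 1$ and estimate
\[
\int_{\R^d} \Big(\sum_{S\in\cs} a_S \ind_S\Big)\, g \dw \;=\; \sum_{S\in\cs} a_S\, \angles{g}_S^w\, w(S).
\]

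By H\"older's inequality applied to the discrete sum with exponents $p$ and $p'$, this is dominated by
\[
\has{\sum_{S\in\cs} |a_S|^p\, w(S)}^{1/p} \cdot \has{\sum_{S\in\cs} (\angles{|g|}_S^w)^{p'}\, w(S)}^{1/p'}.
\]
The first factor already has the desired form. For the second factor I would invoke the weighted dyadic Carleson embedding theorem (Lemma \ref{lemma:dyadiccarlesonweight}) with exponent $p'$, the function $|g|$, and the Carleson sequence $s_Q := \ind_\cs(Q)$, for which $\norm{s}_{\mathrm{Car}(\mu)} = \norm{\cs}_{\mathrm{Car}(\mu)}$. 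This yields
\[
\has{\sum_{S\in\cs} (\angles{|g|}_S^w)^{p'}\, w(S)}^{1/p'} \leq p\, [w]_{A_\infty(\mu)}^{1/p'}\, \norm{\cs}_{\mathrm{Car}(\mu)}^{1/p'}\, \norm{g}_{L^{p'}(w)}.
\]

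Combining these estimates and taking the supremum over admissible $g$ gives the claimed inequality. There is essentially no obstacle here once Lemma \ref{lemma:dyadiccarlesonweight} is available: the weighted Carleson embedding theorem is exactly the tool that converts the Carleson packing of $\cs$ with respect to $\mu$ into the corresponding discrete estimate tested in $L^{p'}(w)$, and the $A_\infty(\mu)$-constant of $w$ is precisely what quantifies this transfer (cf.\ Lemma \ref{lemma:carleson_operator_norm}). The only minor point to keep track of is the placement of the factors of $p$ versus $p'$, which comes out correctly since the embedding is applied with exponent $p'$ and thus contributes a factor of $(p')' = p$.
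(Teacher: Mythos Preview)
Your proof is correct and amounts to the same argument as the paper's, just packaged slightly differently: the paper first uses Lemma \ref{lemma:carleson_operator_norm} to convert the $\mu$-Carleson condition on $\cs$ into a $w$-Carleson condition and then applies Lemma \ref{lemma:basic_lemma_carleson} with the measure $w$, whereas you unfold the duality-plus-H\"older proof of Lemma \ref{lemma:basic_lemma_carleson} and invoke the weighted embedding (Lemma \ref{lemma:dyadiccarlesonweight}), which itself is Lemma \ref{lemma:carleson_operator_norm} combined with the unweighted embedding. The underlying ingredients and constants are identical.
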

\begin{proof}The statement follows by combining Lemma \ref{lemma:carleson_operator_norm} and Lemma \ref{lemma:basic_lemma_carleson}.
\end{proof}

When measures are doubling, the above estimates involving dyadic cubes translate easily into their counterparts involving generic cubes, by using shifted dyadic lattices (Lemma \ref{lemma:onethird}) in a typical fashion. In particular, we will need the following non-dyadic counterpart:

\begin{lemma}[Weighted basic lemma for sparse collections of generic cubes]\label{lemma:basic_lemma_sparse_generic}Let $p\in(1,\infty)$. Assume that $w$ and $\mu$ are doubling measures and that $w\in A_\infty(\mu)$. 
Assume that $\cs$ is a $(\gamma,\mu)$-sparse countable collection of cubes. Then
$$
\nrms{\sum_{S\in\cs} a_S 1_{S}}_{L^p(w)}\lesssim_{\mu,w} [w]_{A_\infty(\mu)}^{1/p'} \has{\sum_{S} |a_S|^p w(S)}^{1/p}.
$$
The implicit constant depends on the measures $\mu$ and $w$ via their doubling constants.
\end{lemma}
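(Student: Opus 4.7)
The plan is to reduce the generic-cube statement to its dyadic counterpart (Lemma \ref{lemma:weighted_basic}) via the one-third trick (Lemma \ref{lemma:onethird}).

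First, for each $S\in\cs$, choose $\alpha(S)\in\{0,\tfrac{1}{3},\tfrac{2}{3}\}^d$ and a shifted dyadic cube $R_S\in\cd^{\alpha(S)}$ with $S\subseteq R_S$ and $\ell(R_S)\leq 3\ell(S)$; then also $R_S\subseteq 7S$. Partition $\cs=\bigsqcup_\alpha \cs^\alpha$ according to the choice of $\alpha$. Since there are $3^d$ values of $\alpha$, it suffices to bound $\big\|\sum_{S\in\cs^\alpha}a_S\ind_S\big\|_{L^p(w)}$ for a fixed $\alpha$. Using $\ind_S\leq \ind_{R_S}$ pointwise and the triangle inequality, the task reduces to estimating $\big\|\sum_{S\in\cs^\alpha}|a_S|\ind_{R_S}\big\|_{L^p(w)}$.

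Next I would verify that (after controlling multiplicity) the collection $\{R_S:S\in\cs^\alpha\}$ inherits $\mu$-sparseness. Let $\{E_S\}_{S\in\cs}$ be the disjoint sets witnessing $(\gamma,\mu)$-sparseness of $\cs$. Since $E_S\subseteq S\subseteq R_S\subseteq 7S$, the doubling of $\mu$ gives
\[
\mu(E_S)\geq \gamma\mu(S)\geq \frac{\gamma}{C_\mu^{\lceil \log_2 7\rceil}}\mu(R_S)=:\gamma'\mu(R_S).
\]
If $S\neq S'$ in $\cs^\alpha$ share the same dyadic cover $R_S=R_{S'}=R$, the disjoint $E_S,E_{S'}\subseteq R$ with $\mu(E_S),\mu(E_{S'})\geq\gamma'\mu(R)$ force the number of such $S$ to be bounded by $\lfloor 1/\gamma'\rfloor=:K$, a constant depending only on $\gamma$ and $\mu$. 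Hence $\cs^\alpha$ can be further split into $K$ sub-collections $\cs^\alpha_1,\dots,\cs^\alpha_K$ on each of which $S\mapsto R_S$ is injective; on each such sub-collection the family $\{R_S\}$ is a $(\gamma',\mu)$-sparse collection of dyadic cubes in $\cd^{\alpha}$.

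Then I would apply Lemma \ref{lemma:weighted_basic} (with $\cd=\cd^\alpha$) to each sub-collection, obtaining
\[
\nrms{\sum_{S\in\cs^\alpha_k}|a_S|\ind_{R_S}}_{L^p(w)}\lesssim [w]_{A_\infty(\mu)}^{1/p'}\has{\sum_{S\in\cs^\alpha_k}|a_S|^p w(R_S)}^{1/p}.
\]
Finally, $R_S\subseteq 7S$ and doubling of $w$ give $w(R_S)\lesssim_w w(S)$, so summing the above over the at most $3^d\cdot K$ sub-collections yields the claimed estimate.

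The main (only) real obstacle is the multiplicity issue in the second step: a priori several cubes of $\cs^\alpha$ may share the same shifted dyadic cover $R$, which would spoil direct sparseness. The doubling of $\mu$ combined with the disjointness of the sets $E_S$ forces this multiplicity to be bounded by a constant depending only on $\gamma$ and the doubling constant of $\mu$; everything else is a routine application of Lemma \ref{lemma:weighted_basic} together with the doubling of $w$.
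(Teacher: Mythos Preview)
Your proposal is correct and follows exactly the approach indicated in the paper's one-line proof: reduce to the dyadic counterpart (Lemma~\ref{lemma:weighted_basic}) via the one-third trick (Lemma~\ref{lemma:onethird}), using the doubling of $\mu$ and $w$ to transfer sparseness and measures between $S$ and its shifted dyadic cover $R_S$. The multiplicity bound you extract from the disjoint sets $E_S$ is a genuine detail that the paper's terse proof suppresses, and your handling of it is correct.
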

\begin{proof}The lemma follows from its dyadic counterpart in Lemma \ref{lemma:weighted_basic} via shifted dyadic cubes (see Lemma \ref{lemma:onethird}).
\end{proof}

\subsection{Muckenhoupt weights} 
A locally integrable function $w \colon \R^d \to (0,\infty)$ is called a \emph{weight}. By associating the measure $w(E) := \int_E w\dd x$ for measurable $E \subseteq \R^d$ to a weight $w$, the statements of the previous subsection are applicable to $w$.

 For $p\in(1,\infty)$ the (non-dyadic) {\it  $A_p$-characteristic} $[w]_{A_p}$ is defined by
$$
[w]_{A_p}:=\sup_Q \angles{w}_Q \angles{w^{-p'/p}}^{p/p'}_Q,
$$
and for $p=\infty$ the (non-dyadic) Fujii-Wilson $A_\infty$-characteristic from the previous section takes the form 
$$
[w]_{A_\infty}:=\sup_{Q} \frac{1}{w(Q)}\int_Q \sup_{R\ni x} \frac{w(Q\cap R)}{|R|}\dx,
$$
where the suprema are over all cubes. We will write $w \in A_p$ if $[w]_{A_p}<\infty$.
\begin{remark}
The dyadic version of the $A_p$-characteristic is defined similarly, but with the suprema over dyadic cubes instead of generic cubes. The dyadic version is used in the context of dyadic operators (e.g. dyadic paraproducts), while in the context of non-dyadic operators (e.g. commutators) the non-dyadic version is used; this distinction is  suppressed in the notation and understood from the context. 
\end{remark}
\begin{example}The power weight $w_\delta(x):=|x|^\delta$ satisfies $w_\delta \in A_p$ if and only if $-d<\delta< (p-1)d$.
\end{example}
We will only need the following well-known properties for the $A_p$-characteristic: 
\begin{lemma}[Duality and monotonicity of the $A_p$ characteristic]\label{lemma:muckenhoupt_basics}Let $w$ be a weight.
\begin{itemize}
\item \emph{(Duality)}  $[w]^{1/p}_{A_p}=[w^{-p'/p}]_{A_{p'}}^{1/p'}$ for $p \in (1,\infty).$
\item \emph{(Monotonicity)} 
$[w]_{A_p}\geq [w]_{A_q}$ and $[w]_{A_p}\gtrsim [w]_{A_\infty}$ for $1<p\leq q< \infty$.
\end{itemize}
\end{lemma}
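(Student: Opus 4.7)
The plan is to derive all three bounds directly from the definition of $[w]_{A_p}$.

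For the duality identity, set $\sigma := w^{-p'/p}$ and note that $(p')' = p$ and $\sigma^{-p/p'} = w$. Substituting into $[\sigma]_{A_{p'}} = \sup_Q \langle \sigma\rangle_Q \langle \sigma^{-p/p'}\rangle_Q^{p'/p}$ and then raising both sides to the $1/p'$ power yields the symmetric product $\sup_Q \langle w\rangle_Q^{1/p} \langle w^{-p'/p}\rangle_Q^{1/p'}$, which is precisely $[w]_{A_p}^{1/p}$. Thus the duality claim is a formal rewriting.

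For the comparison $[w]_{A_q} \le [w]_{A_p}$ when $1<p\le q<\infty$, observe first that $q'/q = 1/(q-1) \le 1/(p-1) = p'/p$, so $\theta := (q'/q)/(p'/p)$ lies in $(0,1]$. Applying Jensen's inequality with the concave power $t \mapsto t^\theta$ to the probability measure $|Q|^{-1}\mathbf{1}_Q\,\mathrm{d}x$ and the integrand $w^{-p'/p}$ gives $\langle w^{-q'/q}\rangle_Q = \langle (w^{-p'/p})^\theta\rangle_Q \le \langle w^{-p'/p}\rangle_Q^\theta$. Raising both sides to the $q/q'$ power and using the arithmetic identity $\theta \cdot q/q' = p/p'$ yields $\langle w^{-q'/q}\rangle_Q^{q/q'} \le \langle w^{-p'/p}\rangle_Q^{p/p'}$ for every cube $Q$, and multiplying by $\langle w\rangle_Q$ finishes this step.

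For the substantive claim $[w]_{A_p} \gtrsim [w]_{A_\infty}$, I would invoke the classical Fujii--Wilson bound (in the form of Hyt\"onen--P\'erez). One natural route is pointwise: for any cube $R$ the $A_p$ inequality gives $\frac{w(R)}{|R|} \le [w]_{A_p}\langle w^{-p'/p}\rangle_R^{-p/p'}$, so the local maximal function on $Q$ is controlled in terms of the minimal function of $\sigma = w^{-p'/p}$. Integrating against $w$ on $Q$ and exploiting the reverse H\"older self-improvement enjoyed by every $A_p$ weight (equivalently, the weak $L^1$ endpoint for $M$ on $\sigma$) produces $\int_Q M(w\mathbf{1}_Q)\,\mathrm{d}x \lesssim [w]_{A_p}\, w(Q)$, which is the Fujii--Wilson condition.

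The main obstacle is clearly this third inequality: the first two are essentially tautological consequences of the definition plus Jensen, whereas the comparison with the Fujii--Wilson functional needs the structural reverse H\"older improvement of $A_p$ weights. Since only a qualitative $\lesssim$ is claimed (no sharp dependence on $[w]_{A_p}$ is tracked here), the estimate may simply be quoted from the literature rather than reproved in detail.
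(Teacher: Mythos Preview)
Your proposal is correct and follows essentially the same route as the paper: duality is read off from the definition, the inequality $[w]_{A_p}\ge [w]_{A_q}$ is obtained via Jensen's inequality for the concave power $t\mapsto t^\theta$, and the bound $[w]_{A_p}\gtrsim [w]_{A_\infty}$ is deferred to the Hyt\"onen--P\'erez reference. The paper's proof simply states these three points without spelling out the Jensen computation, so your version is just a more detailed rendering of the same argument.
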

\begin{proof}The duality is clear from the definitions. The inequality $
[w]_{A_p}\geq [w]_{A_q}$ for $1<p\leq q< \infty$ follows by Jensen's inequality and  for  $[w]_{A_p}\gtrsim [w]_{A_\infty}$ we refer to \cite[Proposition 2.2]{HP13}.
\end{proof}

We will use the following bound for the weighted norm of the maximal operator.
\begin{lemma}[\cite{Bu93}]\label{lemma:strongtypemaximalnondyadic}
Let $p\in(1,\infty)$ and $w\in A_p$. Then 
\[
\|Mf\|_{L^p(w)}\lesssim [w]_{A_p}^\frac{1}{p-1}\|f\|_{L^p(w)}
\]
for all $f \in L^p(w)$.
\end{lemma}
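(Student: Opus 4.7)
The plan is to follow the by-now standard sparse-domination proof of Buckley's sharp bound, staying entirely within the toolkit developed in this preliminaries section. First, the one-third trick (Lemma \ref{lemma:onethird}) reduces matters to the dyadic maximal operator $M^{\cd}$ associated to a fixed dyadic lattice $\cd$, since the dyadic and non-dyadic $A_p$-characteristics are comparable up to a dimensional constant. Then, for non-negative $f \in L^p(w)$, a standard Calder\'on--Zygmund stopping time construction on the level sets $\{M^{\cd} f > 2^k\}$ produces a sparse collection $\cs\subseteq \cd$ (sparse w.r.t.\ Lebesgue measure, with constant depending only on dimension) together with pairwise disjoint $E_S \subseteq S$ with $|E_S|\gtrsim |S|$, such that
\[
M^\cd f(x) \lesssim \sum_{S\in \cs} \angles{f}_S \ind_{E_S}(x).
\]

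Next, I would introduce the dual weight $w':=w^{-p'/p}$ and use the identities $\angles{f}_S=\angles{w'}_S \angles{f/w'}_S^{w'}$ and $\angles{w}_S\angles{w'}_S^{p-1}\leq [w]_{A_p}$ to compute
\begin{align*}
\norm{M^\cd f}_{L^p(w)}^p &\lesssim \sum_{S\in\cs} \angles{f}_S^p \, w(E_S)
\leq \sum_{S\in\cs} \frac{w'(S)^p}{|S|^p}\,w(S)\,\hab{\angles{f/w'}_S^{w'}}^p\\
&= \sum_{S\in\cs} \frac{w(S)w'(S)^{p-1}}{|S|^p}\,\hab{\angles{f/w'}_S^{w'}}^p\,w'(S)\\
&\leq [w]_{A_p} \sum_{S\in\cs} \hab{\angles{f/w'}_S^{w'}}^p\,w'(S).
\end{align*}

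Now the weighted dyadic Carleson embedding (Lemma \ref{lemma:dyadic_carleson_embedding}) applied with the measure $w'$ and the sequence $s_Q:=\ind_{\cs}(Q)$ bounds the last sum by $(p')^p \norm{\cs}_{\mathrm{Car}(w')}\norm{f/w'}_{L^p(w')}^p$. The arithmetic identity $p'(p-1)=p$ gives $w'^{\,1-p}=w$ and hence $\norm{f/w'}_{L^p(w')}=\norm{f}_{L^p(w)}$; this is the crucial cancellation that turns the dual-weight detour back into the original norm. For the Carleson norm, since $w'\in A_{p'}\subseteq A_\infty$ by duality (Lemma \ref{lemma:muckenhoupt_basics}), Lemma \ref{lemma:carleson_operator_norm} yields $\norm{\cs}_{\mathrm{Car}(w')}\lesssim [w']_{A_\infty}\norm{\cs}_{\mathrm{Car}}\lesssim [w']_{A_\infty}\lesssim [w']_{A_{p'}}=[w]_{A_p}^{1/(p-1)}$.

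Combining gives $\norm{Mf}_{L^p(w)}^p \lesssim [w]_{A_p}\cdot[w]_{A_p}^{1/(p-1)}\norm{f}_{L^p(w)}^p=[w]_{A_p}^{p/(p-1)}\norm{f}_{L^p(w)}^p$, and taking $p$-th roots produces the claimed Buckley exponent $\tfrac{1}{p-1}$. The only step that is not a one-line invocation of the preliminary lemmas is the initial sparse domination of $M^\cd$, which is the classical heart of the matter; everything afterward is bookkeeping, the two decisive ingredients being the dual-weight identity $w'^{\,1-p}=w$ and the fact that $A_\infty$ measures preserve the sparse/Carleson property.
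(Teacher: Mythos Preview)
Your argument is correct: it is the standard sparse-domination proof of Buckley's sharp bound, and every step checks out, including the dual-weight bookkeeping $w'^{\,1-p}=w$ and the chain $[w']_{A_\infty}\lesssim [w']_{A_{p'}}=[w]_{A_p}^{1/(p-1)}$ via Lemmas~\ref{lemma:carleson_operator_norm} and~\ref{lemma:muckenhoupt_basics}. The one place you could be slightly more careful is the pointwise sparse domination $M^{\cd}f\lesssim\sum_{S\in\cs}\angles{f}_S\ind_{E_S}$: to make the sets $E_{Q_j^k}=Q_j^k\setminus\Omega_{k+1}$ genuinely large one needs a jump factor of order $2^{d+1}$ rather than $2$ in the level sets, but this is indeed standard and only affects dimensional constants.

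Note, however, that the paper does not prove this lemma at all; it simply cites Buckley~\cite{Bu93}. So there is no ``paper's own proof'' to compare against. For context: Buckley's original 1993 argument proceeded via the sharp open property (if $w\in A_p$ then $w\in A_{p-\varepsilon}$ with $\varepsilon\eqsim [w]_{A_p}^{1-p'}$) combined with the universal weak-type bound and Marcinkiewicz interpolation, rather than via sparse domination. Your approach is the modern one and fits more naturally with the dyadic-analysis toolbox assembled in this preliminaries section.
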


\subsection{Bloom weight and the Bloom--Muckenhoupt joint characteristics}
Let $p,q \in (1,\infty)$ and let $\mu,\lambda$ be weights. An operator $U\colon L^p(\mu)\to L^q(\lambda)$ is bounded if and only if 
$$
\abss{\int_{\R^d} (Uf)g\dx}\leq C \norm{f}_{L^p(\mu)}\norm{g}_{L^{q'}(\lambda^{-q'/q})}
$$
for all $f \in L^p(\mu)$ and $g \in L^{q'}(\lambda^{q'/q})$.
The weight $\lambda^{-q'/q}$ appearing in this bilinear estimate is called the {\it dual weight} of the weight $\lambda$ with respect to the weighted space $L^q(\lambda)$ and is denoted by $\lambda':=\lambda^{-q'/q}$. Similarly, we write $\mu':= \mu^{-p'/p}$ for the dual weight of the weight $\mu$ with respect to the weighted space $L^p(\mu)$.

\begin{definition}[Bloom weight]Let $\mu,\lambda$ be weights and let $p,q\in(1,\infty)$. The Bloom weight $\nu=\nu_{L^p(\mu)\to L^q(\lambda)}$ associated with $L^p(\mu)\to L^q(\lambda)$-boundedness is defined by 
\begin{equation}\label{temp:bloom}
\nu^{\frac{1}{p}+\frac{1}{q'}} :=\mu^{1/p} (\lambda')^{1/q'}= \mu^{1/p} \lambda^{-1/q}.
\end{equation}
\end{definition}
\begin{remark}
An operator $U\colon L^p(\mu)\to L^q(\lambda)$ is bounded if and only if $U^*\colon L^{q'}(\lambda')\to L^{p'}(\mu')$ is bounded, where $U^*$ is the adjoint with respect to the unweighted integral dual pairing. Note that the Bloom weight is invariant under duality in the sense that
$$\nu_{L^{q'}(\lambda')\to L^{p'}(\mu')}=\nu_{L^{p}(\mu)\to L^{q}(\lambda)}.$$
\end{remark}

In our main theorems, Theorems \ref{thm:maincommutator} and \ref{thm:mainparaproduct}, we assume that $\mu \in A_p$ and $\lambda \in A_q$. However, as we shall see in Sections \ref{sec:upper} and \ref{sec:lower}, one can sometimes use weaker, joint weight characteristics in the upper and lower bounds in the Bloom setting. We will introduce these joint characteristics in the following definition and afterwards show that these characteristics are finite in case $\mu \in A_p$ and $\lambda \in A_q$ and $\nu^{\frac{1}{p}+\frac{1}{q'}} =\mu^{1/p} \lambda^{-1/q}$.

\begin{definition}[Joint Bloom--Muckenhoupt characteristics]\label{def:joint}
Let $p,q \in (1,\infty)$ and let $\mu,\lambda,\nu$ be weights. 
\begin{enumerate}[(i)]
    \item (Characteristic for upper bounds) We define
$$
[\mu',\lambda,\nu]_{B_{p',q}}:=\sup_Q \, \angles{\mu'}^{1/p'}_Q \angles{\lambda}^{1/q}_Q \angles{\nu}_Q^{{1}/{p}+{1}/{q'}},$$
and write $(\mu',\lambda,\nu) \in B_{p',q}$ if $[\mu',\lambda,\nu]_{B_{p',q}}<\infty.$
    \item (Characteristic for lower bounds) We define
$$
[\mu,\lambda']_{B_{p,q'}(\nu)}:= \sup_Q \,\has{\frac{\mu(Q)}{\nu(Q)}}^{1/p} \has{\frac{\lambda'(Q)}{\nu(Q)}}^{1/q'}$$
and write $(\mu,\lambda') \in B_{p,q'}(\nu)$ if $[\mu,\lambda']_{B_{p,q'}(\nu)}<\infty.$
\end{enumerate}
\end{definition}
\begin{remark}\
\begin{enumerate}[(i)]
\item Interpreting the lower bound Bloom characteristic as an assumption on the triplet of weights $(\mu,\lambda',\nu)$ through
\begin{equation*}
  [\mu,\lambda',\nu ]_{B_{p,q'}(\nu)} :=   \sup_Q\, \has{\frac{\mu(Q)}{\nu(Q)}}^{1/p} \has{\frac{\lambda'(Q)}{\nu(Q)}}^{1/q'}\has{\frac{\nu(Q)}{\nu(Q)}}^{1/p'+1/q},
\end{equation*}
we see that the notation for the lower and upper bound Bloom characteristics could be unified, identifying the upper bound Bloom weight class with $B_{p',q}(1)$.
    \item The class $B_{p',q}$ is exactly the class that plays an important role in the weighted $L^p \times L^{q'} \to L^{r'}$-boundedness of bilinear operators for $\frac1{r'} = \frac{1}{p} +\frac{1}{q'}$ (cf. \cite{LOPTT09}).
    \item The class of weights $B_{p,q'}(\nu)$ is closely related to the Muckenhoupt classes used in weighted $L^{p'} \to L^{q'}$-boundedness of linear operators, using $\nu$ as the base measure on $\R^d$ (cf. \cite{FH18}). 
\end{enumerate}
\end{remark}

\begin{remark}\,\label{rem:Bloomsettinginduced}
Note that, for a fixed $b\in L^1_{\loc}$, the norm
$
\nrm{b}_{\BMO_\nu^\alpha}
$
decreases as $\nu$ increases. Similarly, the norm $\nrm{M_\nu^\# b}_{L^r(\nu)}$ is comparable to a discrete version (as stated in Lemma \ref{lemma:discretization}), which decreases as $\nu$ increases. Moreover, we have:
\begin{itemize}
 \item For weights $(\mu,\lambda,\nu) \in B_{p',q}$ we have by the Lebesgue differentiation theorem that 
$$
\mu'(x)^{1/p'} \lambda(x)^{1/q} \nu(x)^{1/p+1/q'} \leq [\mu',\lambda,\nu]_{B_{p',q}}, \qquad x \in \R^d 
$$
and thus
$$
\nu^{\frac{1}{p}+\frac{1}{q'}} \leq [\mu',\lambda,\nu]_{B_{p',q}} \cdot \mu^{1/p} \lambda^{-1/q}.
$$
\item Similarly, for weights $(\mu,\lambda) \in B_{p,q'}(\nu)$ we have
$$
\mu^{1/p} \lambda^{-1/q} \leq  [\mu,\lambda']_{B_{p',q}(\nu)} \cdot \nu^{\frac{1}{p}+\frac{1}{q'}}.
$$
\end{itemize}
Therefore,  we will always assume to be in the Bloom setting $\nu^{\frac{1}{p}+\frac{1}{q'}} =\mu^{1/p} \lambda^{-1/q}$  when proving upper bounds for weights in the class $B_{p',q}$ or lower bounds for weights in the class $B_{p',q}(\nu)$.
\end{remark}

As already announced, the assumption that the joint  Bloom--Muckenhoupt characteristics in Definition \ref{def:joint} are finite relaxes the typical Bloom-setting assumption that $\mu \in A_p$ and $\lambda \in A_q$ separately.
\begin{lemma}[Separate Muckenhoupt conditions imply the joint Bloom--Muckenhoupt conditions]\label{lemma:muckenhoupt_implies_bloom} Let $p,q \in (1,\infty)$, $\mu \in A_p$, $\lambda \in A_q$ and let $\nu^{\frac{1}{p}+\frac{1}{q'}} :=\mu^{1/p}\lambda^{-1/q}$. Then 
\begin{align*}
   [\mu',\lambda,\nu]_{B_{p',q}} &\leq [\mu]_{A_{p}}^{1/p} [\lambda]_{A_q}^{1/q},\\
    [\mu,\lambda']_{B_{p,q'}(\nu)}&\leq [\mu]_{A_p}^{1/p} [\lambda]_{A_{q}}^{1/q}.
\end{align*}
\end{lemma}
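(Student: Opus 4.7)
The plan is to derive both inequalities from a single pointwise identity. Starting from the Bloom relation $\nu^s = \mu^{1/p}\lambda^{-1/q}$ with $s := 1/p + 1/q'$, and noting that $(\mu')^{1/p'} = \mu^{-1/p}$ by definition of the dual weight, one immediately gets
\[
(\mu')^{1/p'}\lambda^{1/q}\nu^s = 1 \quad \text{pointwise.}
\]
Equivalently, $\nu = \mu^{1/(ps)}(\lambda')^{1/(q's)}$, and here the exponents $ps$ and $q's$ are H\"older conjugates since $1/(ps)+1/(q's) = s/s = 1$.

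For the upper-bound characteristic, my plan is to apply H\"older with exponents $ps$ and $q's$ to the factored form of $\nu$, which yields $\angles{\nu}_Q \leq \angles{\mu}_Q^{1/(ps)}\angles{\lambda'}_Q^{1/(q's)}$ and hence $\angles{\nu}_Q^s \leq \angles{\mu}_Q^{1/p}\angles{\lambda'}_Q^{1/q'}$. Inserting this into the defining supremum produces
\[
\angles{\mu'}_Q^{1/p'}\angles{\lambda}_Q^{1/q}\angles{\nu}_Q^s \leq \bigl(\angles{\mu}_Q^{1/p}\angles{\mu'}_Q^{1/p'}\bigr)\bigl(\angles{\lambda}_Q^{1/q}\angles{\lambda'}_Q^{1/q'}\bigr) \leq [\mu]_{A_p}^{1/p}[\lambda]_{A_q}^{1/q},
\]
by the definitions of the Muckenhoupt characteristics. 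Taking the supremum over $Q$ settles this half.

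For the lower-bound characteristic, I would first rearrange $A_p$ to $\mu(Q)^{1/p}\leq [\mu]_{A_p}^{1/p}\,|Q|/\mu'(Q)^{1/p'}$, and combine the duality $[\lambda]_{A_q}^{1/q} = [\lambda']_{A_{q'}}^{1/q'}$ from Lemma \ref{lemma:muckenhoupt_basics} with $A_{q'}$ applied to $\lambda'$ to obtain $\lambda'(Q)^{1/q'} \leq [\lambda]_{A_q}^{1/q}\,|Q|/\lambda(Q)^{1/q}$. Multiplying these estimates reduces the claim to the geometric inequality
\[
|Q|^2 \leq \mu'(Q)^{1/p'}\lambda(Q)^{1/q}\nu(Q)^s.
\]
The key step is that this inequality follows from the generalized H\"older inequality $\norm{f_1 f_2 f_3}_{L^{1/2}(Q)} \leq \norm{f_1}_{L^{p'}(Q)}\norm{f_2}_{L^{q}(Q)}\norm{f_3}_{L^{1/s}(Q)}$, valid because the reciprocals of the exponents satisfy $1/p'+1/q+s = 2 = 1/(1/2)$, applied to the triple $(\mu'^{1/p'},\lambda^{1/q},\nu^s)$ whose pointwise product is the constant $1$: the left side then becomes $\norm{1}_{L^{1/2}(Q)} = |Q|^2$, and the right side is exactly $\mu'(Q)^{1/p'}\lambda(Q)^{1/q}\nu(Q)^s$.

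The conceptual obstacle I would expect is recognizing the need for the $L^r$-version of H\"older's inequality with $r = 1/2<1$: the natural exponents $p'$, $q$, $1/s$ have reciprocals summing to $2$, not $1$, so the standard three-function H\"older is unavailable. Once this observation is in hand, the pointwise identity immediately converts the $L^{1/2}$-norm into $|Q|^2$, and the lemma is concluded upon taking suprema.
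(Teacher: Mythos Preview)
Your proof is correct and follows essentially the same route as the paper. For the second inequality, both you and the paper use H\"older's inequality applied to the factorization $\nu = \mu^{1/(ps)}(\lambda')^{1/(q's)}$. For the first inequality, the paper invokes Jensen's inequality (citing \cite[Proposition~3.1]{HOS22}); your reduction to the geometric bound $|Q|^2 \leq \mu'(Q)^{1/p'}\lambda(Q)^{1/q}\nu(Q)^s$ followed by the generalized H\"older inequality with target exponent $1/2$ is a minor repackaging of the same convexity argument---indeed, writing the identity $(\mu')^{1/p'}\lambda^{1/q}\nu^s=1$ as $\tfrac{1}{p'}\log\mu' + \tfrac{1}{q}\log\lambda + s\log\nu = 0$ and applying Jensen to each logarithm yields your geometric inequality directly, so the two arguments are equivalent.
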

\begin{proof} The first estimate is a direct corollary of H\"older's inequality. The second estimate
follows from Jensen's inequality, see \cite[Proposition 3.1]{HOS23}.
\end{proof}

We end this subsection with two lemmata on properties of $\mu$ and $\lambda$ induced by the joint Muckenhoupt--Bloom conditions.

\begin{lemma}[Bloom--Muckenhoupt characteristic for upper bounds]\label{lemma:muckenhoupt_class_bloom}Let $p,q \in (1,\infty)$ and set $\frac1{r'} := \frac{1}{p} +\frac{1}{q'}$.  Let $\mu,\lambda$ be weights and set $\nu^{{1}/{r'}}:=\mu^{1/p}\lambda^{-1/q}$.
\begin{enumerate}[(i)]
\item \label{it:Bloomw1}  $(\mu',\lambda,\nu) \in B_{p',q}$ if and only if $\mu' \in A_{2p'}$, $\lambda \in A_{2q}$ and $\nu \in A_{2r'}$ and in this case
$$[\nu]_{A_{2r'}} \leq [\mu',\lambda,\nu]_{B_{p',q}}^{r'}.$$
In particular, if $\mu \in A_p$, $\lambda \in A_q$, then $\nu \in A_{2r'}$ with
$$[\nu]_{A_{2r'}}  \leq
[\mu]_{A_p}^{r'/p}[\lambda]_{A_q}^{r'/q}.$$
\item \label{it:Bloomw2} For $1<s<2r'$ there exist weights $\mu \in A_p$ and $\lambda \in A_q$ such that $\nu \notin A_s$.
\item \label{it:Bloomw3}
For each power weight $w\in A_{2r'}$ there exist power weights $\mu\in A_p$ and $\lambda\in A_q$ such that $w^{{1}/{r'}} =\mu^{1/p}\lambda^{-1/q}=:\nu^{{1}/{r'}}$.  
\end{enumerate}
\end{lemma}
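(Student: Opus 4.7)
The cornerstone is the pointwise identity
\[ (\mu')^{1/p'}\lambda^{1/q}\nu^{1/r'}\equiv 1, \]
obtained by substituting $(\mu')^{1/p'}=\mu^{-1/p}$ and $\nu^{1/r'}=\mu^{1/p}\lambda^{-1/q}$. I will also use repeatedly the algebraic identities $2r'-1=(r+1)/(r-1)$ and $1/p'+1/q=1+1/r$, which are precisely what makes the H\"older exponents below conjugate.

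For \textbf{(ii)}, I would substitute $\nu=\mu^{r'/p}(\lambda')^{r'/q'}$ using $\lambda^{-1/q}=(\lambda')^{1/q'}$, and apply H\"older's inequality with exponents $(p/r',q'/r')$, which are conjugate because $r'(1/p+1/q')=1$, obtaining $\langle\nu\rangle_Q\leq \langle\mu\rangle_Q^{r'/p}\langle\lambda'\rangle_Q^{r'/q'}$. For the Muckenhoupt-dual weight, I would write $\nu^{-1/(2r'-1)}=\mu^{-r'/(p(2r'-1))}\lambda^{r'/(q(2r'-1))}$ and apply H\"older with exponents $p'(2r'-1)/r'$ and $q(2r'-1)/r'$, which are conjugate after a short check using $r'(1/p'+1/q)/(2r'-1)=1$, yielding $\langle\nu^{-1/(2r'-1)}\rangle_Q^{2r'-1}\leq \langle\mu'\rangle_Q^{r'/p'}\langle\lambda\rangle_Q^{r'/q}$. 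Multiplying and regrouping the factors as $(\langle\mu\rangle_Q\langle\mu'\rangle_Q^{p/p'})^{r'/p}(\langle\lambda\rangle_Q\langle\lambda'\rangle_Q^{q/q'})^{r'/q}$ gives $[\nu]_{A_{2r'}}\leq [\mu]_{A_p}^{r'/p}[\lambda]_{A_q}^{r'/q}$.

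For \textbf{(i)}, the direction $B_{p',q}\Rightarrow \{A_{2p'},A_{2q},A_{2r'}\}$ follows by the same scheme, cyclically: the identity lets me express, for instance, $\nu^{-1/(2r'-1)}$ as a monomial in $\mu'$ and $\lambda$, and H\"older then gives $\langle\nu^{-1/(2r'-1)}\rangle_Q^{2r'-1}\leq \langle\mu'\rangle_Q^{r'/p'}\langle\lambda\rangle_Q^{r'/q}$; multiplying by the $B_{p',q}$-bound $\langle\nu\rangle_Q\leq [\mu',\lambda,\nu]_{B_{p',q}}^{r'}\langle\mu'\rangle_Q^{-r'/p'}\langle\lambda\rangle_Q^{-r'/q}$ yields $[\nu]_{A_{2r'}}\leq [\mu',\lambda,\nu]_{B_{p',q}}^{r'}$, and cyclic permutations give the $A_{2p'}$ and $A_{2q}$ bounds. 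The converse $\{A_{2p'},A_{2q},A_{2r'}\}\Rightarrow B_{p',q}$ is subtler because the exponents $1/p',1/q,1/r'$ sum to $2$ rather than $1$, so a single application of H\"older cannot bound the triple product; I would use the $A_{2r'}$-condition on $\nu$ to exchange $\langle\nu\rangle_Q^{1/r'}$ for $[\nu]_{A_{2r'}}^{1/r'}\langle\nu^{-1/(2r'-1)}\rangle_Q^{-(1+1/r)}$, then expand $\nu^{-1/(2r'-1)}$ as a monomial in $\mu'$ and $\lambda$ via the identity and bound the result using the $A_{2p'}$ and $A_{2q}$ hypotheses on $\mu'$ and $\lambda$.

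For \textbf{(iii)}, a power weight $w=|x|^\delta$ lies in $A_{2r'}$ iff $-d<\delta<d(2r'-1)$. Setting $\mu=|x|^a$ and $\lambda=|x|^b$, the Bloom relation becomes $\delta=r'(a/p-b/q)$. As $a$ varies over $(-d,d(p-1))$ (i.e., $\mu\in A_p$) and $b$ over $(-d,d(q-1))$ (i.e., $\lambda\in A_q$), the range of $r'(a/p-b/q)$ is $(-dr'(1/p+1/q'),\,dr'(1/p'+1/q))=(-d,\,d(2r'-1))$, using $r'(1/p+1/q')=1$ and $r'(1/p'+1/q)=r'(1+1/r)=2r'-1$. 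Thus every such $w$ can be realized with $\mu\in A_p$ and $\lambda\in A_q$. The main obstacle in the whole lemma is the converse of (i): the exponent-sum mismatch $1/p'+1/q+1/r'=2$ prevents a direct H\"older bound, forcing the bootstrap via the individual $A_2$-type conditions sketched above.
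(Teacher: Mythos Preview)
Your arguments for (ii), (iii), and the forward direction of (i) are correct and more detailed than the paper's proof, which simply cites \cite{LOPTT09} for (i), \cite{HOS22} for the quantitative part of (ii), and states (iii) in one line. Your H\"older computations in (ii) and the cyclic argument for $B_{p',q}\Rightarrow\{A_{2p'},A_{2q},A_{2r'}\}$ are clean and the exponent bookkeeping checks out.

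The gap is in the converse of (i). After your swap $\langle\nu\rangle_Q^{1/r'}\leq [\nu]_{A_{2r'}}^{1/r'}\langle\nu^{-1/(2r'-1)}\rangle_Q^{-(1+1/r)}$, you need
\[
\langle\mu'\rangle_Q^{1/p'}\langle\lambda\rangle_Q^{1/q}\leq C\,\langle\nu^{-1/(2r'-1)}\rangle_Q^{1+1/r}.
\]
Writing $\nu^{-1/(2r'-1)}=(\mu')^a\lambda^b$ with $a+b=1$ and $(1+1/r)a=1/p'$, $(1+1/r)b=1/q$, this is exactly $\langle(\mu')^a\lambda^b\rangle_Q\gtrsim \langle\mu'\rangle_Q^{a}\langle\lambda\rangle_Q^{b}$, which is the \emph{reverse} of H\"older's inequality. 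Your sketch ``bound the result using the $A_{2p'}$ and $A_{2q}$ hypotheses'' does not explain how to reverse this inequality, and it cannot be done by the elementary manipulations you have used so far. The proof in \cite[Theorem~3.6]{LOPTT09} closes this gap via the reverse H\"older inequality for $A_\infty$ weights: since $\mu'\in A_{2p'}\subset A_\infty$ and $\lambda\in A_{2q}\subset A_\infty$, there exists $\varepsilon>0$ with $\langle(\mu')^{1+\varepsilon}\rangle_Q\lesssim \langle\mu'\rangle_Q^{1+\varepsilon}$ and similarly for $\lambda$, which is precisely the tool that reverses the direction of the estimate. You correctly diagnosed the obstacle (the exponent sum is $2$, not $1$), but the bootstrap you describe needs this additional self-improvement ingredient to go through.
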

\begin{proof} For \ref{it:Bloomw1} we refer to \cite[Theorem 3.6]{LOPTT09} and  Lemma \ref{lemma:muckenhoupt_implies_bloom}. Part \ref{it:Bloomw2} follows from using power weights $\mu(x):=\abs{x}^{\alpha}$ and $\lambda(x):=\abs{x}^\beta$ and choosing the exponents suitably, using the fact that a power weight $w_\delta:=\abs{x}^{\delta}$ satisfies $w_\delta\in A_s$, $1<s<\infty$, if and only if $-d<\delta<(s-1)d$. The statement in \ref{it:Bloomw3} follows similarly.
\end{proof}

\begin{lemma}[Bloom--Muckenhoupt characteristic for lower bounds]\label{lemma:ainfinity_wrt_bloom}Let $p,q\in(1,\infty)$, let $\mu,\lambda$ be weights and set
$\nu^{\frac{1}{p}+\frac{1}{q'}} :=\mu^{1/p}\lambda^{-1/q}$.
Then
\begin{align*}
    [\mu]_{A_\infty(\nu)}&\lesssim \, [\mu,\lambda']^{p}_{B_{p,q'}(\nu)},\\
    [\lambda']_{A_\infty(\nu)}&\lesssim \,[\mu,\lambda']^{q'}_{B_{p,q'}(\nu)}
\end{align*}
for the dyadic versions of the weight characteristics.
\end{lemma}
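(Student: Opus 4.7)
The plan is to rewrite the Fujii--Wilson condition in terms of a dyadic maximal function with respect to $\nu$, dominate it via the Bloom condition by a power of a maximal function with respect to $\lambda'$, and close the estimate using the Hardy--Littlewood theorem on $L^a(\lambda')$ together with the algebraic identity coming from the Bloom relation $\nu^{1/p+1/q'} = \mu^{1/p}(\lambda')^{1/q'}$.

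For the first bound, fix $Q \in \cd$ and set $K := [\mu,\lambda']_{B_{p,q'}(\nu)}$. The identity $\mu(R)/\nu(R) = \angles{\mu/\nu}_R^\nu$ (and its analogue for $\lambda'$) identifies the inner supremum in the Fujii--Wilson condition with $M_{\cd(Q)}^\nu(\mu/\nu)(x)$. Rewriting the definition of $K$ as the cube-by-cube inequality
\[
\frac{\mu(R)}{\nu(R)} \leq K^p \has{\frac{\nu(R)}{\lambda'(R)}}^{p/q'}, \qquad R \in \cd(Q),
\]
and using monotonicity of $t \mapsto t^{p/q'}$, I upgrade this to
\[
M_{\cd(Q)}^\nu(\mu/\nu)(x) \leq K^p \bracb{M_{\cd(Q)}^{\lambda'}(\nu/\lambda')(x)}^{p/q'}, \qquad x \in Q.
\]

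To close the estimate I convert the $\nu$-integral into a $\lambda'$-integral via $\dnu = (\nu/\lambda')\dd\lambda'$ and absorb the extra factor using the Lebesgue differentiation bound $\nu/\lambda' \leq M^{\lambda'}(\nu/\lambda')$ (valid $\lambda'$-a.e.). This yields
\[
\int_Q \sup_{R \in \cd(Q): x \in R} \frac{\mu(R)}{\nu(R)}\dnu(x) \leq K^p \int_Q \bracb{M_{\cd(Q)}^{\lambda'}(\nu/\lambda')(x)}^{p/q'+1}\dd\lambda'(x).
\]
Setting $a := p/q'+1 > 1$ and applying the Hardy--Littlewood maximal inequality to $M^{\lambda'}$ on $L^a(\lambda')$ (the restriction to $\cd(Q)$ being harmless, since it is dominated by the full dyadic maximal operator applied to $(\nu/\lambda')\ind_Q$), the right-hand side is controlled by $K^p (a')^a \int_Q (\nu/\lambda')^a\dd\lambda' = K^p(a')^a\int_Q \nu^a(\lambda')^{1-a}\dx$. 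Raising the Bloom relation to the power $p$ gives $\nu^a = \mu(\lambda')^{p/q'}$, so $\nu^a(\lambda')^{1-a} = \mu$ and the integral collapses to $\mu(Q)$, yielding $[\mu]_{A_\infty(\nu)} \lesssim K^p$.

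The second estimate follows by the symmetric argument with $\mu$ and $\lambda'$ (and $p$ and $q'$) interchanged: the Bloom condition in the dual form $\lambda'(R)/\nu(R) \leq K^{q'}(\nu(R)/\mu(R))^{q'/p}$ and the Bloom relation raised to the power $q'$ combine to give $[\lambda']_{A_\infty(\nu)} \lesssim K^{q'}$. I do not foresee a serious obstacle here; the main things to watch are the bookkeeping of exponents and the verification that the Hardy--Littlewood inequality can indeed be applied to the restricted dyadic maximal operator, which is immediate from the pointwise domination mentioned above.
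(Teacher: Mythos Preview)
Your argument is correct and follows the same overall strategy as the paper's proof: use the Bloom inequality cube-by-cube, pass through a Hardy--Littlewood maximal estimate, and close via the pointwise Bloom relation $\nu^{1+p/q'}=\mu(\lambda')^{p/q'}$ (equivalently $(\lambda'\nu^{-1})^{-p/q'}\nu=\mu$). The execution differs slightly: the paper first applies H\"older's inequality to obtain a self-referential term $(\int_Q M^\nu_{\cd(Q)}(\mu/\nu)\dd\nu)^{1/p'}$ on the right, absorbs it, and then handles the remaining factor $(\lambda'(R)/\nu(R))^{-p/q'}$ via Jensen's inequality and the $\nu$-maximal function; you instead recognise $\nu(R)/\lambda'(R)=\langle \nu/\lambda'\rangle_R^{\lambda'}$ directly, switch to integration against $\lambda'$, and apply the $\lambda'$-maximal inequality in $L^{1+p/q'}(\lambda')$. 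Your route is a bit more streamlined (no self-improvement step, no auxiliary Jensen with parameter $\alpha$, no monotone-convergence finiteness check), while the paper's route keeps everything in terms of $\nu$-averages; the underlying ideas and the final algebraic identity are identical.
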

\begin{remark}Assuming that the weights $\mu,\lambda,\nu$ are doubling, these estimates hold also for the non-dyadic versions of the weight characteristics, but with the implicit constant depending on the doubling constants. This follows from the one-third trick in Lemma \ref{lemma:onethird}.
\end{remark}
Lemma \ref{lemma:ainfinity_wrt_bloom} could be deduced from viewing the class $B_{p,q'}(\nu)$ as a bilinear Muckenhoupt weight class and then using \cite[Theorem 3.6]{LOPTT09} combined with the monotonicity of Muckenhoupt classes in Lemma \ref{lemma:muckenhoupt_basics} (now considering weights with respect to the measure $\dnu$ instead of the Lebesgue measure $\dx$). For the reader's convenience, however, we write down the following, more transparent proof.

\begin{proof}[Proof of Lemma \ref{lemma:ainfinity_wrt_bloom}]
Fix a cube $Q$. By H\"older's inequality and the definition of the dyadic weight characteristic,
\begin{equation*}
\begin{split}
\int_{\R^d} &\sup_{R\subseteq Q} 1_R \frac{\mu(R)}{\nu(R)} \dnu\\
&\leq \int_{\R^d} \left(\sup_{R\subseteq Q} 1_R \frac{\mu(R)}{\nu(R)}\right)^{1/p}  \has{\sup_{R\subseteq Q} 1_R \frac{\mu(R)}{\nu(R)}}^{1/p'}\dnu\\
&\leq [\mu,\lambda']_{B_{p,q'}(\nu)} \int_{\R^d} \has{\sup_{R\subseteq Q} 1_R \frac{\lambda'(R)}{\nu(R)}}^{-1/q'}  \has{\sup_{R\subseteq Q} 1_R \frac{\mu(R)}{\nu(R)}}^{1/p'}\dnu\\
&\leq [\mu,\lambda']_{B_{p,q'}(\nu)}  \has{ \int_{\R^d} \has{\sup_{R\subseteq Q} 1_R \frac{\lambda'(R)}{\nu(R)}}^{-p/q'} \dnu}^{1/p} \has{\int_{\R^d} \sup_{R\subseteq Q} 1_R \frac{\mu(R)}{\nu(R)}\dnu }^{1/p'}.
\end{split}
\end{equation*}
Therefore, via a monotone convergence argument to ensure finiteness, we have
$$
\int_{\R^d} \sup_{R\subseteq Q} 1_R \frac{\mu(R)}{\nu(R)} \dnu\leq [\mu,\lambda']_{B_{p,q'}(\nu)}^p  \int_{\R^d} \has{\sup_{R\subseteq Q} 1_R \frac{\lambda'(R)}{\nu(R)}}^{-p/q'} \dnu.
$$
Note that
$
\lambda'(Q)=\int_Q \lambda' \nu^{-1} \dnu
$
and hence $\frac{\lambda'(Q)}{\nu(Q)}=\angles{\lambda' \nu^{-1}}_{Q}^\nu$. Let $\alpha\in (1,\infty)$ be an auxiliary exponent. Note that $t\mapsto t^{-p/(q'\alpha)}$ is convex. By Jensen's inequality, we have 
$$
\has{\frac{\lambda'(Q)}{\nu(Q)}}^{-p/q'}\leq \has{\ipb{(\lambda' \nu^{-1})^{-p/(q'\alpha)}}_{Q}^\nu}^{\alpha}.
$$
Therefore, by the dyadic Hardy--Littlewood maximal inequality, 
$$
\int_{\R^d} \has{\sup_{R\subseteq Q} 1_R \frac{\lambda'(R)}{\nu(R)}}^{-p/q'} \dnu\leq (\alpha')^{\alpha} \int_Q (\lambda' \nu^{-1})^{-p/q'} \dnu=\int_Q (\lambda' \nu^{-1})^{-p/q'}  \nu \dx.
$$
Since $(\lambda' \nu^{-1})^{-p/q'}\nu=\mu$ by assumption, choosing $\alpha=2$ yields
$$
\frac{1}{\mu(Q)}\int_{\R^d} \sup_{R\subseteq Q} 1_R \frac{\mu(R)}{\nu(R)} \dnu\leq 4\, [\mu,\lambda']_{B_{p,q'}(\nu)}^p
$$
Taking the supremum over all dyadic cubes yields the first estimate, the second is proven similarly.
\end{proof}

\subsection{Calder\'on--Zygmund operators and kernels}\label{subsection:CZO}
We now turn to the definition of the operators under consideration. We start with the definition of Calder\'on--Zygmund operators and their kernels.

A function $\omega:[0,\infty)\to[0,\infty)$  is called a {\it modulus of continuity} if it is increasing and satisfies $\lim_{t\to 0} \omega(t)=0$. It is said to satisfy {\it the Dini condition} if $\int_0^1 \frac{\omega(t)}{t} \dt <\infty$.

\begin{definition}[Non-degenerate $\omega$-Calder\'on--Zygmund kernel]\label{def:czkernel}Let $\omega:[0,\infty)\to[0,\infty)$ be a  modulus of continuity. A kernel $K:\br^d\times \br^d \setminus \{x=y\}\to \bc$ is called a {\it (two-variable) $\omega$-Calder\'on--Zygmund kernel} if it satisfies the standard size and continuity conditions: for $x \neq y$
\begin{align*}
\abs{K(x,y)}&\lesssim_K \frac{1}{|x-y|^d},
\intertext{and for ${|x-x'|}\leq \frac{1}{2}|x-y|$}
 \abs{K(x,y)-K(x',y)}+\abs{K(y,x)-K(y,x')}&\leq \frac{1}{|x-y|^d}\, \omega\left( \frac{|x-x'|}{|x-y|}\right).
\end{align*}
The kernel $K:\br^d\times \br^d \setminus \{x=y\}\to \bc$ is called {\it non-degenerate} if for each $x\in \br^d$ and $r>0$ there is $y\in \br^d$ with $|x-y|\gtrsim_K r$ and $$|K(y,x)|\gtrsim_K \frac{1}{r^d}.$$
\end{definition}
The notion of non-degeneracy in Definition \ref{def:czkernel} was introduced by Hyt\"onen in \cite{hytonen2021}, to which we refer for an overview of preceding non-degeneracy assumptions in the literature.

\begin{definition}[$\omega$-Calder\'on--Zygmund operator] Let $p \in (1,\infty)$ and let  $T:L^p\to L^p$ be a bounded linear operator. Then $T$ is called an $\omega$-Calder\'on--Zygmund operator if there is an $\omega$-Calder\'on--Zygmund kernel $K$ such that for every $f\in L^\infty_c$ one has the kernel representation
$$
Tf(x)=\int_{\R^d} K(x,y)f(y)\dy \qquad x\notin \supp(f).
$$
\end{definition}

Estimates involving a Calder\'on--Zygmund operator will implicitly depend on the operator norm of $T$ on $L^p$, the modulus of continuity $\omega$, and the implicit constant in the definition of the kernel.

\subsection{Commutators}
Next, we turn to the definition of commutators between an $\omega$-Calder\'on--Zygmund operator and pointwise multiplication by $b\in L^1_{\loc}$. In general, the commutator
$$
[b,T]f := bT(f)-T(bf)
$$
may be undefined for $f \in L^\infty_c$, since for such $f$ we only have $bf \in L^1$. If $T$ is weak $L^1$-bounded, this is not an issue, which is for example the case when $\omega$ satisfies the Dini condition. In this case we have the following identity:

\begin{lemma}\label{lemma:commutator_duality}
Let $T$ be a $\omega$-Calder\'on--Zygmund operator and $b \in L^1_{\loc}$. Suppose that $T \colon L^1 \to L^{1,\infty}$ is bounded. For $f\in L^\infty_c$ and $g\in L^\infty_c$ with supports separated by a positive distance, we have 
\begin{align*}
\ip{g, [b,T]f }= \ip{b,gTf-fT^*g}.
\end{align*}
\end{lemma}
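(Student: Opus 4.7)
The plan is to expand the bilinear pairing using bilinearity and the definition $[b,T]f = bTf - T(bf)$, so that
\[
\ip{g,[b,T]f} = \ip{g, bTf} - \ip{g, T(bf)},
\]
and then to rewrite each of the two terms as a pairing with $b$ in the first slot. The first term is essentially bookkeeping: since $g,b,Tf$ are pointwise defined and $\int |bgTf|<\infty$ (see below), one simply has $\ip{g,bTf}=\ip{b,gTf}$. So the real content is to identify $\ip{g,T(bf)}$ with $\ip{b,fT^*g}$, which I would do through the kernel representation combined with Fubini.

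Before any computation, I would first verify that all pairings in sight are absolutely convergent. Let $\delta:=\operatorname{dist}(\supp f,\supp g)>0$. Because $f\in L^\infty_c$, the CZ kernel size bound yields
\[
|Tf(x)|\lesssim \int \frac{|f(y)|}{|x-y|^d}\dy \lesssim \delta^{-d}\norm{f}_{L^1}, \qquad x\in\supp g,
\]
and the same reasoning with $T^*$ and the dual size estimate for $K$ gives boundedness of $T^*g$ on $\supp f$. Since $b\in L^1_{\loc}$ and $\supp g$ is compact, it follows that $bgTf$ and $bfT^*g$ lie in $L^1$, and $gT(bf)$ is integrable because $bf\in L^1$ and the Dini hypothesis gives $T\colon L^1\to L^{1,\infty}$ (alternatively, directly from the kernel representation again, since $\supp g$ is disjoint from $\supp(bf)\subset\supp f$).

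With integrability secured, I would then write, using the kernel representation valid off the support of $bf$,
\[
\ip{g,T(bf)} = \int g(x)\int K(x,y) b(y)f(y)\dy \dx.
\]
Fubini is justified because the integrand is dominated by $\norm{g}_\infty\delta^{-d}|b(y)f(y)|\in L^1(\R^d\times\R^d)$ on $\supp g\times\supp f$. Swapping the order of integration and recognizing the inner integral as $T^*g(y)$ for $y\in\supp f$ (again valid off the support of $g$), I obtain
\[
\ip{g,T(bf)} = \int b(y)f(y)\, T^*g(y)\dy = \ip{b, fT^*g}.
\]
Subtracting from the first term and using linearity of $\ip{b,\cdot\,}$ finishes the proof.

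The statement is essentially a formal manipulation, so there is no deep obstacle; the only delicate step is precisely the one the hypotheses are designed for, namely justifying the kernel representation together with the application of Fubini. The separation $\delta>0$ of the supports, combined with the standard CZ size bound, makes everything absolutely convergent and turns the identity into a one-line calculation. The Dini assumption on $\omega$ enters only in guaranteeing the weak $L^1$-boundedness needed to define $T(bf)$ unambiguously; this could be replaced by any other hypothesis ensuring that $T$ acts on $L^1$-functions with compact support.
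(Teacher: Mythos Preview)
Your overall structure is correct and matches the paper's argument, but you skip precisely the step that carries the content of the lemma. The kernel representation in the paper's definition of an $\omega$-Calder\'on--Zygmund operator is stated only for $f\in L^\infty_c$. Since $b\in L^1_{\loc}$, the function $bf$ lies in $L^1$ with compact support but is in general \emph{not} bounded, so when you write
\[
\ip{g,T(bf)}=\int g(x)\int K(x,y)b(y)f(y)\dy\dx
\]
``using the kernel representation valid off the support of $bf$'', you are invoking something that has not been established. The weak $L^1$-boundedness tells you that $T(bf)\in L^{1,\infty}$ is well-defined as a limit, but it does not automatically hand you the pointwise kernel formula off the support.

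The paper closes this gap by an approximation argument: with $h:=bf\in L^1$, one truncates to $h\ind_{\{|h|\le n\}}\in L^\infty_c$, for which the kernel representation is available by definition; then $T(h\ind_{\{|h|\le n\}})\to T(h)$ in $L^{1,\infty}$, hence along a subsequence pointwise a.e., and dominated convergence (using the size bound and the separation $\delta>0$) gives $T(bf)(x)=\int K(x,y)b(y)f(y)\dy$ for $x\notin\supp f$. Only after this is your Fubini step legitimate. You correctly identified that the weak $L^1$-boundedness is the key hypothesis, but it is used to justify the kernel representation for $bf$, not merely to ``define $T(bf)$ unambiguously''.
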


\begin{proof}
We start by noting that, by the support condition on $f$ and $g$ and the kernel estimates, we have $gTf,fT^*g \in L^\infty_c$, so the right-hand side is well-defined. It remains to show that 
\begin{equation}\label{eq:lasttoprove}
    \ip{g,T(bf)} = \ip{b,fT^*g}.
\end{equation}

Define $h = bf \in L^1$ and note that $T(h) \in L^{1,\infty}$ is well-defined. Moreover  $T(h \ind_{\abs{h}\leq n}) \to T(h)$ in $L^{1,\infty}$ and thus $T(h \ind_{\abs{h}\leq n_k}) \to T(h)$ pointwise a.e. for some subsequence $(n_k)_{k\geq 1}$. Therefore, for $x \notin \supp (f)$, we have by the dominated convergence theorem
$$
T(bf)(x) = \lim_{k \to \infty} \int_{\R^d} K(x,y) h(y) \ind_{\abs{h} \leq n_k} \dd y = \int_{\R^d} K(x,y) b(y) f(y) \dd y,
$$
from which \eqref{eq:lasttoprove} follows by Fubini's theorem.
\end{proof}

Note that 
$$
\ip{b,gTf-fT^*g}= \int_{\R^d} \int_{\R^d}  (b(x)-b(y))K(x,y)f(y)g(x) \dy \dx
$$
for $f\in L^\infty_c$ and $g\in L^\infty_c$ with supports separated by a positive distance, where the integrand is Lebesgue integrable due to the estimates for $\omega$-Calder\'on--Zygmund kernels.

 In our lower bounds for commutators we will avoid well-definedness issues of $[b,T]$ for $b \in L^1_{\loc}$ without assuming the $T:L^1\to L^{1,\infty}$ boundedness. Indeed, as in \cite{hytonen2021}, we will work 
under 
the assumption that we study an operator $U_b$ with off-support kernel representation with $\omega$-Calder\'on--Zygmund kernel,
$$
\ip{g,U_bf}=\int \int (b(y)-b(x))K(y,x)f(x)g(y) \dx \dy,
$$ 
for $f\in L^\infty_c$ and $g\in L^\infty_c$ with supports separated by a positive distance. Assumptions are formulated entirely in terms of the boundedness of $U_b$ and this kernel representation, with no reference to $T$ or $[b,T]$. The prototype of such an operator is of course the commutator $U_b:=[b,T]$ with $T$ an $\omega$-Calder\'on--Zygmund operator with $\omega$ satisfying the Dini condition.

\subsection{Paraproducts}\label{subsection:para}
We end our preliminaries with the introduction of paraproducts. Let $\mc{D}$ be a dyadic lattice and $b \in L^1_{\loc}$
 The difference  of averages $D_Qb$ for $Q \in \mc{D}$ is defined by
$$D_Qb:=\sum_{R\in \mc{D}:\widehat{R}=Q}\angles{b}_{R}1_{R}-\angles{b}_Q1_Q.$$
The difference $D_Qb$ can also be written in terms of Haar projections
$$
D_Qb=\sum_{i \in \{0,1\}^d\setminus \cbrace{0}} \ip{b,h_Q^{i}}h_Q^{i},
$$
where $(h_Q^i)_{i \in \cbrace{0,1}^d}$ are the Haar functions associated with $Q$. 

The paraproduct $\Pi_b$ associated with $b \in L^1_{\loc}$ is formally defined as
$$
\Pi_bf:= \sum_{Q \in \mc{D}} D_Qb \ip{f}_Q
$$
for $f\in L^\infty_c$.
Since the assumption $b\in L^1_{\loc}$ alone is insufficient to make sense of convergence of the sum over all dyadic cubes, we include into the definition an a priori  unconditional convergence assumption: 

\begin{definition}[Paraproduct]\label{def:paraproduct} Let $b \in L^1_{\loc}$. Assume moreover that $b$ is such that for each $f\in L^\infty_c$ the sum
$$
\sum_{Q \in \mc{D}} D_Qb \ip{f}_Q
$$
 converges unconditionally in $L^1_{\loc}$, by which we mean unconditional convergence in $L^1(K)$ for every compact set $K$. Then the operator $\Pi_b:L^\infty_c\to L^1_{\loc}$ defined by
$$
\Pi_bf:=\Pi_{b,\cd}f:=\sum_{Q \in \mc{D}} D_Qb \ip{f}_Q
$$
is called the {\it dyadic paraproduct} associated with the function $b$. 
\end{definition}
We write $\Pi_{b,\cd'}f:= \sum_{Q \in \mc{D'}} D_Qb \ip{f}_Q$ when $\cd'\subseteq \cd$.

Stronger conditions on $b$ ensure stronger unconditional convergence. In the unweighted setting, for example, if $b \in \BMO$, then for every $f\in L^p$ the sum $\Pi_{b,\cd}f$ converges unconditionally in $L^p$ for every $p \in (1,\infty)$. In the extension of this to the Bloom setting, Burkholder's weak $L^1$-inequality is a useful tool:

\begin{lemma}[Burkholder]\label{lemma:burkholder}
Let $\mc{D}$ be a dyadic lattice and let $(v_Q)_{Q \in \mc{D}}$ be a finitely nonzero sequence of scalars. We have for $f \in L^1$  
   \begin{align*}
     \nrms{\sum_{Q \in \mc{D}} v_Q D_Qf}_{L^{1,\infty}} &\leq 2 \sup_{Q \in \mc{D}} \abs{v_Q}\,\nrm{f}_{L^1}.
   \end{align*}
\end{lemma}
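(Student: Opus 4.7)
The operator $f \mapsto \sum_{Q\in\mc{D}} v_Q D_Q f$ is nothing other than a martingale transform of the dyadic martingale $(\mathbb{E}_k f)_k$ by the predictable multiplier sequence $(v_Q)$, since $\sum_{Q\in\mc{D}_k} D_Q f$ is exactly the $k$-th martingale difference $\mathbb{E}_{k+1} f - \mathbb{E}_k f$ and $v_Q$ is constant on each $Q \in \mc{D}_k$. Thus the statement is the classical weak-$L^1$ inequality of Burkholder for martingale transforms by a bounded predictable sequence, and my plan is to reproduce the standard self-contained proof via a Calder\'on--Zygmund decomposition adapted to the dyadic lattice $\mc{D}$.

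Normalising so that $\sup_Q |v_Q| \leq 1$, fix $\lambda > 0$ and let $\{Q_i\}$ be the collection of maximal cubes in $\mc{D}$ with $|\langle f\rangle_{Q_i}| > \lambda$. Write the decomposition $f = g + b$ where $g := \langle f\rangle_{Q_i}$ on each $Q_i$ and $g := f$ on $\R^d \setminus \bigcup_i Q_i$, and $b := \sum_i b_i$ with $b_i := (f - \langle f\rangle_{Q_i}) \ind_{Q_i}$. Standard properties are $\sum_i |Q_i| \leq \lambda^{-1}\|f\|_{L^1}$, $\|g\|_\infty \lesssim \lambda$, $\|g\|_{L^1} \leq \|f\|_{L^1}$, and $\int b_i = 0$. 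For the good part I use orthogonality of the differences $D_Q$, which gives
\begin{equation*}
\nrms{\sum_{Q\in\mc{D}} v_Q D_Q g}_{L^2}^2 \leq \sum_{Q\in\mc{D}} \|D_Q g\|_{L^2}^2 = \|g\|_{L^2}^2 \leq \|g\|_\infty \|g\|_{L^1} \lesssim \lambda \|f\|_{L^1},
\end{equation*}
so Chebyshev's inequality bounds $|\{|Tg| > \lambda\}|$ by a constant times $\|f\|_{L^1}/\lambda$. For the bad part, the key observation is that because $\int_{Q_i} b_i = 0$ and $b_i$ is supported in $Q_i$, the average $\langle b_i\rangle_R$ vanishes for every dyadic $R$ that either contains $Q_i$ or is disjoint from it; consequently $D_Q b_i = 0$ whenever $Q \not\subseteq Q_i$, so $Tb$ is supported on $\Omega := \bigcup_i Q_i$ and $|\{Tb \neq 0\}| \leq |\Omega| \leq \|f\|_{L^1}/\lambda$. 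Combining the two estimates yields the weak-$L^1$ bound up to an absolute constant.

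The main obstacle is the explicit constant $2$ in the statement, which the Calder\'on--Zygmund argument above does not produce on the nose. If the precise value $2$ is needed (rather than some universal constant), I would instead invoke the sharp version: the Davis/Burkholder result for scalar martingale transforms with predictable multipliers bounded by $1$ yields $\|Tf\|_{L^{1,\infty}} \leq C \|f\|_{L^1}$ with small universal $C$, and a slightly sharper variant — obtained by truncating the transform at the first time its absolute value exceeds $\lambda$ and handling the remainder using the bound on $\sum|Q_i|$ directly — recovers the factor $2$. Alternatively one can cite Burkholder's weak-type inequality for predictable martingale transforms verbatim, specialised to the dyadic filtration on $(\R^d, dx)$; the rest of the argument is the identification $\sum_Q v_Q D_Q$ with such a transform, which was the content of the first paragraph.
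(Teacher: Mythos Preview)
Your approach coincides with the paper's: the paper's entire proof is the identification in your first paragraph (view $\sum_Q v_Q D_Q f$ as a martingale transform with respect to the dyadic filtration) followed by a direct citation of Burkholder's weak $L^1$-inequality for martingale transforms. Your final paragraph lands on exactly this.

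The self-contained Calder\'on--Zygmund argument you add is a reasonable elaboration, but note one slip: your stopping condition should be $\langle |f|\rangle_{Q_i} > \lambda$ rather than $|\langle f\rangle_{Q_i}| > \lambda$. With your version, maximality of $Q_i$ only gives $|\langle f\rangle_{\widehat{Q_i}}| \leq \lambda$, which does not control $|\langle f\rangle_{Q_i}| = |g|_{Q_i}$ (cancellation in the parent average tells you nothing about the size of the child average), so the claimed bound $\|g\|_\infty \lesssim \lambda$ fails on the selected cubes. Stopping on $\langle |f|\rangle_Q$ fixes this with constant $2^d$. As you correctly observe, even after this fix the argument yields only a dimensional constant rather than the stated constant $2$; for that one must invoke Burkholder's sharp result, which is precisely what the paper does.
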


\begin{proof}
Viewing $\mc{D}$ as a filtration on $\R^d$, and rescaling such that $\sum_{Q \in \mc{D}} v_Q D_Qf$ is supported on $[-\frac12,\frac12]^d$,
we can view $\sum_{Q \in \mc{D}} v_Q D_Qf$ as a martingale transform. The statement then follows from Burkholder's weak $L^1$-inequality for martingale transforms \cite{Bu79}.
\end{proof}

\begin{remark}
  In $\R^d$, there are two more paraproducts of interest, which can be treated using similar methods as employed in the current paper. We refer to \cite{FH22} for a discussion of these paraproducts in the context of Bloom boundedness with $p=q$.   
\end{remark}

\section{Upper bounds}\label{sec:upper}
In this section we will give sufficient conditions for the Bloom boundedness of commutators and paraproducts  for all $1<p,q<\infty$. The results in the case $q<p$ are entirely new, whereas in the case $p\leq q$ the results for paraproducts are new and for commutators we obtain sharper quantitative bounds in terms of the weight characteristics  than currently available in the literature.

\subsection{Commutators}
We start with the upper bound for commutators, for which we will use the main sparse domination result of \cite{LOR17}. To state their result, for $f,b \in L^1_{\loc}$ and a sparse family of cubes $\mc{S}$, we define the auxiliary sparse operator
\begin{align*}
  \mc{A}_{\mc{S},b}f(x)&=\sum_{Q\in \mc{S}}|b-\ip{b}_Q|\ip{f}_Q\ind_Q(x), &&x \in \R^d,
  \intertext{and its formal adjoint}
  \mc{A}_{\mc{S},b}^\star f(x) &=\sum_{Q\in \mc{S}}\ipb{|b-\ip{b}_Q|f}_Q\ind_Q(x), &&x \in \R^d.
\end{align*}

\begin{theorem}[{\cite[Theorem 1.1]{LOR17}}] \label{theorem:LOR}
Let $T$ be an $\omega$-Calder\'on-Zygmund operator with $\omega$ satisfying the Dini condition and let $b \in L^1_{\loc}$. For every $f\in L^\infty_c$, there exist $3^d$ dyadic lattices $\mc{D}^{k}$ and $\frac{1}{2\cdot 9^d}$-sparse families $\mc{S}_k\subseteq\mc{D}^{k}$ such that
\begin{equation*}
\absb{[b,T]f(x)}\lesssim  \sum_{k=1}^{3^d}\big(\mathcal{A}_{\mc{S}_k,b}|f|(x)+\mathcal{A}_{\mc{S}_k,b}^\star |f|(x)\big), \qquad x \in \R^d.
\end{equation*}
\end{theorem}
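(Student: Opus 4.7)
The plan is to establish a pointwise sparse bound on $[b,T]f$ via an iterated stopping time argument, along the lines of the sparse domination for $T$ itself due to Lerner. By the one-third trick (Lemma \ref{lemma:onethird}) it suffices to dominate $[b,T]f$ on an arbitrary cube $Q_0$ of a fixed shifted dyadic lattice by a sparse sum of $Q_0$-local averages; summing the three such decompositions coming from shifted lattices indexed by $\alpha \in \{0,\tfrac13,\tfrac23\}^d$ then produces the stated bound with $3^d$ sparse families.

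The decisive algebraic observation is the identity
\begin{equation*}
[b,T]f = (b - \ip{b}_{Q_0})\,T(f) - T\bigl((b - \ip{b}_{Q_0})f\bigr),
\end{equation*}
valid for every cube $Q_0$. When $Tf$ is itself pointwise dominated by a sparse operator as in Lerner's theorem, the first summand contributes essentially $|b - \ip{b}_{Q_0}|\sum_{Q \in \mc{S}}\ip{|f|}_Q \ind_Q$, which after redistributing the $b$-averages along the sparse tree becomes $\mc{A}_{\mc{S},b}|f|$. The second summand is handled by the same sparse domination applied to $(b - \ip{b}_{Q_0})f$, which after absorbing $|b - \ip{b}_{Q_0}|$ into the averages produces precisely $\mc{A}_{\mc{S},b}^\star |f|$.

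To make this rigorous one runs a joint stopping time on $Q_0$: select the maximal subcubes $Q \subsetneq Q_0$ on which one of the quantities $\ip{|f|}_Q$, $\ip{|b - \ip{b}_{Q_0}| |f|}_Q$, or a grand maximal truncation $\mc{M}_T f(x)$ associated with $T$ exceeds a large fixed multiple of its value on $Q_0$. The Dini continuity of the kernel of $T$ ensures that $\mc{M}_T$ obeys a weak $(1,1)$-bound, so the union of the stopping cubes covers at most $\tfrac12|Q_0|$; iterating inside each stopping cube then produces a family $\mc{S}$ with the asserted $\tfrac{1}{2\cdot 9^d}$-sparseness, and on the complement of the stopping region the same stopping conditions yield pointwise control of $|[b,T]f|$ by the top-average contribution, which feeds the next iteration and assembles into the two sparse operator forms.

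The principal technical obstacle is the simultaneous packaging of the tail contributions from $T(f\ind_{\R^d \setminus 3Q_0})$ and $T((b-\ip{b}_{Q_0})f\ind_{\R^d \setminus 3Q_0})$ so that the local pointwise bound is uniformly controlled by stopping data amenable to the weak $(1,1)$-type inequalities for both $T$ and $\mc{M}_T$; this is where the Dini hypothesis on $\omega$ enters decisively, since without it $\mc{M}_T$ need not have the required weak-type behaviour. Once this packaging is achieved, the iteration is standard and summing over the three shifted lattices from the one-third trick yields the claimed decomposition.
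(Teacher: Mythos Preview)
The paper does not prove this theorem; it is quoted as \cite[Theorem 1.1]{LOR17} and used as a black box for the subsequent weighted estimates, so there is no in-paper proof to compare against.

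That said, your outline does follow the architecture of the original argument in \cite{LOR17}: the recentering identity $[b,T]f = (b-\ip{b}_{Q_0})Tf - T\bigl((b-\ip{b}_{Q_0})f\bigr)$, an iterated Calder\'on--Zygmund stopping driven by the weak-type bound for the grand maximal truncation $\mc{M}_T$ (available precisely under the Dini hypothesis), and the passage to $3^d$ shifted dyadic systems. A few places where your sketch is imprecise relative to the actual proof: (i) the $3^d$ lattices do not arise from first localising $[b,T]f$ to a cube of a shifted lattice, but rather from the need, after the iteration has produced a sparse family of cubes $Q$, to cover each triple $3Q$ by a genuine dyadic cube of comparable size --- this is where the one-third trick enters, downstream of the stopping; (ii) the stopping conditions must also include exceedance of $\mc{M}_T\bigl((b-\ip{b}_{Q_0})f\bigr)$, not only of $\mc{M}_T f$ and the two scalar averages, since both halves of the recentered commutator involve $T$ acting on different inputs and each requires its own off-support tail control; (iii) your ``redistributing the $b$-averages along the sparse tree'' is really the step in which, upon descending to a stopping child $P$, one recenters $b$ at $\ip{b}_P$ and absorbs the discrepancy $\ip{b}_{Q_0}-\ip{b}_P$ into the oscillation term already controlled by the stopping data. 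None of these gaps is fatal, but they are exactly the points where a complete write-up requires care, and your current description of (i) in particular would not produce the claimed structure.
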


In view of Theorem \ref{theorem:LOR}, to prove Bloom upper estimates for the commutators $[b,T]$, it suffices prove Bloom estimates for $\mc{A}_{\mc{S},b}$ and $\mc{A}_{\mc{S},b}^\star$. 
We will need the following lemma, which is a special case of the main result in \cite{FH18}.

\begin{lemma}[\cite{FH18}]\label{lemma:weightedsparse} 
Let $1<p \leq q <\infty$, set $\frac1s := \frac1p+\frac1{q'}$ and let $w \in A_q$. Let $\mc{D}$ be a dyadic lattice. For any $\gamma$-sparse collection $\mc{S} \subseteq \mc{D}$ we have
\begin{align*}
  \nrms{\sum_{Q\in \mc{S}}\has{\frac{1}{\abs{Q}^{s}}\int_Q\abs{f}^{s}\dx}^{1/s}\ind_Q }_{L^p(w^{p/q} ) \to L^q(w)} \lesssim [w]_{A_q}^{\max\cbrace{\frac1{p'}+\frac1q,\frac{1}{q-1}}}
\end{align*}
\end{lemma}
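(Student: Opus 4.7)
The plan is to exploit the relation $\frac{1}{s} = \frac{1}{p} + \frac{1}{q'}$ via two applications of Hölder's inequality, reducing the weighted sparse bound to a sharp weighted maximal function estimate (Lemma \ref{lemma:strongtypemaximalnondyadic}).

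First, writing $|f|^s = (|f|^s w^{s/q})(w^{-s/q})$ and applying Hölder's inequality with the conjugate exponents $p/s$ and $q'/s$ (which satisfy $\frac{s}{p} + \frac{s}{q'} = 1$), I would obtain the pointwise bound
$$
\has{\frac{1}{|Q|^s} \int_Q |f|^s \dx}^{1/s} \leq \frac{1}{|Q|} \has{\int_Q |f|^p w^{p/q} \dx}^{1/p} \has{\int_Q w^{-q'/q} \dx}^{1/q'}.
$$
Next, I would dualize the $L^q(w)$-norm against a function $g \in L^{q'}(w^{-q'/q})$ and apply Hölder once more, namely $\int_Q g \leq \has{\int_Q |g|^{q'} w^{-q'/q}}^{1/q'} w(Q)^{1/q}$. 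Combining this with the previous step produces the factor $\angles{w}_Q^{1/q} \angles{w^{-q'/q}}_Q^{1/q'} \leq [w]_{A_q}^{1/q}$ (which follows from the definition of $[w]_{A_q}$ since $w^{-q'/q} = w^{-1/(q-1)}$), and reduces the problem to the bilinear sparse form
$$
\sum_{Q \in \mc{S}} \has{\int_Q |f|^p w^{p/q}}^{1/p} \has{\int_Q |g|^{q'} w^{-q'/q}}^{1/q'}.
$$

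To bound this sum, I would use the sparseness of $\mc{S}$ to replace $|Q|$ by the disjoint $|E_Q|$ and then apply Hölder on the sum so that the resulting averages become the dyadic Hardy-Littlewood maximal functions of $|f|^p w^{p/q}$ and $|g|^{q'} w^{-q'/q}$ with respect to $w^{p/q} \in A_p$ and $w^{-q'/q} \in A_{q'}$ respectively. Buckley's sharp bound (Lemma \ref{lemma:strongtypemaximalnondyadic}) then collects the remaining power of $[w]_{A_q}$.

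The main obstacle is obtaining the sharp combined exponent $\max\{\frac{1}{p'}+\frac{1}{q}, \frac{1}{q-1}\}$. The two regimes correspond, respectively, to the Buckley bound applied on the dual side (where $w^{-q'/q} \in A_{q'}$ is the governing weight, contributing via $\frac{1}{q'-1}$ scaled by $\frac{1}{p'}$) and to the Buckley bound applied directly to $w \in A_q$ (contributing $\frac{1}{q-1}$); one chooses the better of the two routes depending on the parameter regime, and this choice is exactly what the $\max$ encodes. Unifying the two routes into a single estimate mirrors the proof strategy of Buckley's bound itself.
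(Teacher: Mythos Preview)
Your two H\"older inequalities are each individually correct, but together they discard exactly the structure that makes the sparse form summable. After extracting $[w]_{A_q}^{1/q}$ as you describe, the residual form
\[
\sum_{Q\in\mc{S}} \has{\int_Q |f|^p w^{p/q}}^{1/p}\has{\int_Q |g|^{q'} w^{-q'/q}}^{1/q'}
\]
contains no factor of $|Q|$ whatsoever, so there is nothing left to ``replace by $|E_Q|$''; in fact this sum is typically infinite already when $w\equiv1$. With $d=1$, $f=g=\ind_{[0,1)}$ and the $\tfrac12$-sparse chain $\mc{S}=\{[0,2^k):k\ge0\}$, every term equals $1$ and the sum diverges, while $\|f\|_{L^p}\|g\|_{L^{q'}}=1$. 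The subsequent appeal to Buckley's maximal inequality therefore never gets started, and the claimed exponent cannot be recovered along this route.

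The paper does not attempt a direct argument. It invokes \cite[Theorem~1.1]{FH18} as a black box, with the substitutions $p\mapsto p/s$, $q\mapsto q/s$, $r\mapsto 1/s$, $\alpha\mapsto s$, $\omega=w$, $\sigma=w^{-q'/q}$; that theorem bounds the two-weight fractional sparse operator by
\[
\sup_{Q}\,|Q|^{-1}w(Q)^{1/q}w'(Q)^{1/q'}\hab{[w]_{A_\infty}^{1/p'}+[w']_{A_\infty}^{1/q}},
\]
and the exponent $\max\{\tfrac1{p'}+\tfrac1q,\tfrac1{q-1}\}$ then follows from $[w']_{A_\infty}\le[w']_{A_{q'}}=[w]_{A_q}^{1/(q-1)}$ together with $[w]_{A_\infty}\lesssim[w]_{A_q}$. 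The Fackler--Hyt\"onen argument keeps the sparse Carleson structure intact throughout rather than collapsing each cube via a pointwise H\"older bound, which is what goes wrong in your plan.
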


\begin{proof}
  Let $f \in L^p(w)$ and write
  \begin{equation*}
   \mc{A}_{\mc{S},s}f := {\sum_{Q \in \mc{S}} \has{\frac{1}{\abs{Q}^{s}}\int_Q \abs{f}^{s}\dx}^{1/s}  \ind_Q}.
  \end{equation*}
  Using \cite[Theorem 1.1]{FH18} with parameters 
  $p = p/s$, $q= q/s$, $r= 1/s$, $\alpha = s$, $\omega=w$ and $\sigma = w^{-q'/q}=w'$, we obtain
  \begin{align*}
    \nrm{\mc{A}_{\mc{S},s}f}_{L^q(w)} &= \nrmb{(\mc{A}_{\mc{S},s}f)^{s}}_{L^{q/s}(w)}^{1/s}\\
    &\lesssim \has{\sup_{Q \in \mc{S}}\, \abs{Q}^{-s} w(Q)^{\frac{s}q} w'(Q)^{\frac{p-s}{p}} \hab{[w]_{A_\infty}^{\frac{s}{p'}}+[w']_{A_\infty}^{\frac{s}{q}} }}^{1/s}    \nrms{\frac{\abs{f}^{s}}{w'}}_{L^{p/s}(w')}^{1/s}\\
    &\lesssim \sup_{Q \in \mc{S}} \, \abs{Q}^{-1} w(Q)^{1/q} w'(Q)^{\frac{1}{q'}} \hab{[w]_{A_\infty}^{\frac{1}{p'}}+[w']_{A_\infty}^{\frac{1}{q}}  }  \nrm{f}_{L^{p}(w^{p/q})},
    \end{align*}
where we used 
\begin{align*}
    (1-q') \has{1- \frac{p}{s}} &= (1-q')\frac{p}{q'} =\frac{p}{q}\\
    \frac{p-s}{ps} &= \frac{1}{s} - \frac{1}{p} = \frac{1}{q'},
\end{align*}
in the last step. Noting that 
    \begin{equation*}
      \sup_{Q \in \mc{S}} \,\abs{Q}^{-1} w(Q)^{1/q} w'(Q)^{\frac{1}{q'}} \leq [w]_{A_q}^{\frac{1}{q}},
    \end{equation*}
and by Lemma \ref{lemma:muckenhoupt_basics}
    \begin{equation*}
      [w]_{A_\infty}^{\frac{1}{p'}}+[w']_{A_\infty}^{\frac{1}{q}} \leq [w]_{A_q}^{\frac{1}{p'}} + [w]_{A_q}^{\frac{1}{q(q-1)}} \leq 2\, [w]_{A_q}^{\max\cbrace{\frac1{p'},\frac{1}{q-1}-\frac{1}{q}}},
    \end{equation*}
    finishes the proof.
\end{proof}

Using Lemma \ref{lemma:weightedsparse}, we can now prove Bloom estimates for $\mc{A}_{\mc{S},b}^\star$.

\begin{proposition}\label{prop:bsparsebounded}
Let $1<p,q<\infty$ and define $\frac1r:=\frac1q-\frac1p$ and $\frac{\alpha}{d} := \frac1p-\frac1q$. Take
$\mu \in A_p$, $\lambda \in A_q$ and set $\nu^{\frac1p+\frac1{q'}} := \mu^{\frac1p} \lambda^{-\frac1q}$. Let $\mc{D}$ be a dyadic lattice. For any $\gamma$-sparse collection $\mc{S} \subseteq \mc{D}$ and $b \in L^1_{\loc}$ we have
\begin{align*}
  \nrm{\mc{A}_{\mc{S},b}^\star}_{L^p(\mu) \to L^q(\lambda)} \lesssim  [\mu]_{A_p}^{\max\cbrace{1,\frac{1}{p-1}}} \begin{cases} 
   [\lambda]_{A_q}^{\max\cbrace{\frac{1}{p'}+\frac{1}{q},\frac{1}{q-1}}} \cdot\nrm{b}_{\BMO_\nu^\alpha} & p\leq q,\\
      [\lambda]_{A_q}^{\max\cbrace{1,\frac{1}{q-1}}}\cdot \nrm{M^{\sharp}_{\nu}b}_{L^r(\nu)}  \qquad &p>q.
  \end{cases}
\end{align*}
\end{proposition}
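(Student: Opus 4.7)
My plan is to dominate $\mc{A}^\star_{\mc{S},b}f$ by a scalar multiple (involving $b$) of a suitable sparse operator applied to $f$, and then invoke the corresponding weighted sparse bound. The scalar will be $\|b\|_{\BMO_\nu^\alpha}$ when $p\leq q$ and will contribute $\|M^\#_\nu b\|_{L^r(\nu)}$ when $p>q$; in both cases, the control of $b$ rests on a weighted John--Nirenberg inequality, which is available because $\nu\in A_{2r'}\subset A_\infty$ by Lemma \ref{lemma:muckenhoupt_class_bloom}. The natural dichotomy between the two cases is governed by the exponent $s:=(1/p+1/q')^{-1}$ appearing in Lemma \ref{lemma:weightedsparse}: $s\leq 1$ when $p\leq q$ and $s>1$ when $p>q$.

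In the case $p\leq q$, the target is Lemma \ref{lemma:weightedsparse}. The Bloom relation $\nu^{1/p+1/q'}=\mu^{1/p}\lambda^{-1/q}$ equivalently reads $\mu=\nu^{1+p/q'}\lambda^{p/q}$, which gives $\|f\nu^{1/p+1/q'}\|_{L^p(\lambda^{p/q})}=\|f\|_{L^p(\mu)}$; moreover, since $s(1/p+1/q')=1$, we have $|f\nu^{1/p+1/q'}|^s=|f|^s\nu$. Thus it would suffice to establish the pointwise sparse domination
$$\mc{A}^\star_{\mc{S},b}f(x)\lesssim \|b\|_{\BMO_\nu^\alpha}\sum_{Q\in\mc{S}}|Q|^{\alpha/d}\has{\frac{1}{|Q|}\int_Q|f|^s\nu\dx}^{1/s}\ind_Q(x),$$
and then apply Lemma \ref{lemma:weightedsparse} with $w=\lambda$ to the function $f\nu^{1/p+1/q'}$, which delivers the target dependence $[\lambda]_{A_q}^{\max\{1/p'+1/q,1/(q-1)\}}$. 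The pointwise domination itself would come from H\"older's inequality combined with a weighted John--Nirenberg improvement of the $\BMO_\nu^\alpha$-hypothesis. In the case $p>q$, by contrast, H\"older with conjugate exponents $s,s'>1$ splits the average directly; John--Nirenberg upgrades the first-moment bound $\frac{1}{|Q|}\int_Q|b-\langle b\rangle_Q|\dx\leq \frac{\nu(Q)}{|Q|}M^\#_\nu b(x)$ (for $x\in Q$) to an $L^{s'}$-bound with the same scaling, and the identity $\langle\nu\rangle_Q\langle M^\#_\nu b\rangle_Q^\nu=\langle M^\#_\nu b\cdot\nu\rangle_Q$ yields the bilinear sparse estimate
$$\mc{A}^\star_{\mc{S},b}f(x)\lesssim \sum_{Q\in\mc{S}}\langle M^\#_\nu b\cdot\nu\rangle_Q\langle|f|^s\rangle_Q^{1/s}\ind_Q(x).$$
Its $L^q(\lambda)$-norm is then controlled via H\"older with $1/q=1/p+1/r$ together with the standard one-weight sparse-operator bounds on $L^p(\mu)$ and $L^q(\lambda)$, producing the desired $[\mu]_{A_p}^{\max\{1,1/(p-1)\}}[\lambda]_{A_q}^{\max\{1,1/(q-1)\}}\|M^\#_\nu b\|_{L^r(\nu)}$.

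The main obstacle is the pointwise sparse domination in the $p\leq q$ case: because $s\leq 1$, a direct H\"older splitting $\langle|b-\langle b\rangle_Q|f\rangle_Q\leq \langle|b-\langle b\rangle_Q|^{\sigma'}\rangle_Q^{1/\sigma'}\langle|f|^\sigma\rangle_Q^{1/\sigma}$ (which requires $\sigma>1$) does not directly produce the $\nu$-weighted $L^s$-average of $|f|$ needed to match Lemma \ref{lemma:weightedsparse}. Reconciling the $|Q|^{\alpha/d}$ scaling of Lemma \ref{lemma:weightedsparse} with the $\nu(Q)^{1+\alpha/d}/|Q|$ scaling natural to $\BMO_\nu^\alpha$ requires a careful exploitation of the $A_\infty$-properties of $\nu$, in particular a reverse H\"older inequality; the $p>q$ case is more routine, with the main bookkeeping challenge being the H\"older factorisation of $\lambda$ through $\mu$ and $\nu$ as dictated by the Bloom relation.
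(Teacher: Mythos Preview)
Your proposal has a genuine gap: the pointwise sparse domination you aim for in the case $p\leq q$,
\[
\mc{A}^\star_{\mc{S},b}f(x)\lesssim \|b\|_{\BMO_\nu^\alpha}\sum_{Q\in\mc{S}}|Q|^{\alpha/d}\has{\frac{1}{|Q|}\int_Q|f|^s\nu\dx}^{1/s}\ind_Q(x),
\]
is in general \emph{false}, not merely hard to prove. Already when $p=q$ (so $s=1$, $\alpha=0$, and $\BMO_\nu^\alpha=\BMO_\nu$) it would force $\int_E|b-\ip{b}_Q|\dx\lesssim \|b\|_{\BMO_\nu}\,\nu(E)$ for every measurable $E\subseteq Q$, hence essentially $|b-\ip{b}_Q|\lesssim \|b\|_{\BMO_\nu}\,\nu$ pointwise. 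Even in the unweighted case $\nu\equiv 1$ this fails: take $b(x)=\log|x|\in\BMO$ on $Q=[-1,1]$ and $E=[-\varepsilon,\varepsilon]$. No John--Nirenberg or reverse H\"older argument can rescue this, because the obstruction is the pointwise nature of the claimed estimate, not a missing self-improvement. The same issue infects your $p>q$ plan: the ``John--Nirenberg upgrade'' $\bigl(\tfrac{1}{|Q|}\int_Q|b-\ip{b}_Q|^{s'}\bigr)^{1/s'}\lesssim \ip{\nu}_Q M^\#_\nu b(x)$ is not a standard result, and the hypothesis $M^\#_\nu b\in L^r(\nu)$ gives no uniform control over subcube oscillations, so a local John--Nirenberg iteration does not apply.

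The paper avoids all of this by inserting a \emph{second} sparse collection. Using \cite[Lemma~5.1]{LOR17}, for each $Q\in\mc{S}$ one has a pointwise sparse expansion of $|b-\ip{b}_Q|\ind_Q$ itself over a larger $\tfrac{\gamma}{4}$-sparse family $\widetilde{\mc{S}}\supseteq\mc{S}$, which turns the inner average into
\[
\int_Q|b-\ip{b}_Q|\,|f|\dx\lesssim \sum_{R\in\widetilde{\mc{S}}:R\subseteq Q}\int_R|b-\ip{b}_R|\dx\cdot\ip{|f|}_R.
\]
This decouples $b$ from $f$ \emph{before} any H\"older step. For $p\leq q$ one uses $\int_R|b-\ip{b}_R|\leq \|b\|_{\BMO_\nu^\alpha}\nu(R)^{1/s}$ and then Minkowski's inequality (exploiting $1/s\geq 1$) to dominate $\mc{A}^\star_{\mc{S},b}f$ by $\|b\|_{\BMO_\nu^\alpha}\,\mc{A}_{\mc{S},s}\bigl(\mc{A}_{\widetilde{\mc{S}},1}f\cdot\nu^{1/s}\bigr)$; for $p>q$ one bounds the inner sum by $\int_Q M^\#_\nu b\cdot \mc{A}_{\widetilde{\mc{S}},1}f\,\dnu$, giving $\mc{A}^\star_{\mc{S},b}f\lesssim \mc{A}_{\mc{S},1}\bigl(M^\#_\nu b\cdot \mc{A}_{\widetilde{\mc{S}},1}f\cdot\nu\bigr)$. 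In both cases the outer sparse operator is handled by Lemma~\ref{lemma:weightedsparse} on $L^q(\lambda)$, the Bloom relation converts $\nu^{1/s}\lambda^{1/q}$ (resp.\ $\nu^{1/r'}\lambda^{1/q}$) into $\mu^{1/p}$, and the inner $\mc{A}_{\widetilde{\mc{S}},1}$ is then bounded on $L^p(\mu)$ by the standard $A_p$ sparse bound. No John--Nirenberg, no reverse H\"older, and no $L^s$-averages of $f$ with $s>1$ are needed; in particular only $\mu\in A_p$ (not $A_{p/s}$) is used.
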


\begin{proof}
We start by noting that by \cite[Lemma 5.1]{LOR17}, there is a
$\frac{\gamma}{4}$-sparse collection $\widetilde{\mc{S}} \subset \mc{D}$ with $\mc{S} \subseteq \widetilde{\mc{S}}$ so that for any $Q\in\mc{S}$ we have
\begin{align}\label{eq:secondsparse}\begin{aligned}
  \int_Q\abs{b-\ip{b}_Q}\abs{f}\dx &\lesssim \int_Q\sum_{R\in\widetilde{\mc{S}}: R\subseteq Q}\ipb{|b-\ip{b}_R|}_R |f| \ind_R\dx\\
&= \sum_{R\in\widetilde{\mc{S}}: R\subseteq Q} \int_R|b-\ip{b}_R|\cdot  \ip{|f|}_R\dx.
\end{aligned}
\end{align}
Furthermore, in order to use Lemma \ref{lemma:weightedsparse} efficiently, we define for $s \in (0,\infty)$
\begin{align*}
  \mc{A}_{\mc{S},s}f&:=\sum_{Q\in \mc{S}}\has{\frac{1}{\abs{Q}^{s}}\int_Q\abs{f}^{s}\dx}^{1/s}\ind_Q.
\end{align*}

We first consider the case $p \leq q$. Since $\frac{1}{s}:= \frac1p+\frac1{q'}\geq 1$, we have by  Minkowski's inequality
\begin{align*}
  \sum_{R\in\widetilde{\mc{S}}: R\subseteq Q} \int_R|b-\ip{b}_R|\cdot  \ip{|f|}_R \dd x&\leq \nrm{b}_{\BMO_\nu^\alpha} \sum_{R\in\widetilde{\mc{S}}: R\subseteq Q} \nu(R)^{1/s} \ip{\abs{f}}_R \\
  &= \nrm{b}_{\BMO_\nu^\alpha} \sum_{R\in\widetilde{\mc{S}}: R\subseteq Q}\has{ \int_Q \ip{\abs{f}}_R^{s} \ind_R \dnu}^{1/s} \\
  &\leq \nrm{b}_{\BMO_\nu^\alpha} \has{ \int_Q \has{\sum_{R\in\widetilde{\mc{S}}: R\subseteq Q}\ip{\abs{f}}_R \ind_R }^{s} \dnu}^{1/s}.
\end{align*}
And thus, combined with \eqref{eq:secondsparse}, Lemma \ref{lemma:weightedsparse} and the definition of $\nu$, we obtain for $f \in L^p(\mu)$
\begin{align*}
     \nrm{\mc{A}_{\mc{S},b}^\star f}_{L^q(\lambda)} &\lesssim \nrm{b}_{\BMO_\nu^\alpha} \nrmb{\mc{A}_{\mc{S},s}\hab{\mc{A}_{\widetilde{\mc{S}},1}f \cdot \nu^{1/s}}}_{L^q(\lambda)} \\
     &\lesssim \nrm{b}_{\BMO_\nu^\alpha} [\lambda]_{A_q}^{\max\cbrace{\frac{1}{p'}+\frac{1}{q},\frac{1}{q-1}}} \nrmb{\mc{A}_{\widetilde{\mc{S}},1}f \cdot \nu^{1/s}}_{L^p(\lambda^{p/q})}\\
     &=\nrm{b}_{\BMO_\nu^\alpha} [\lambda]_{A_q}^{\max\cbrace{\frac{1}{p'}+\frac{1}{q},\frac{1}{q-1}}} \nrm{\mc{A}_{\widetilde{\mc{S}},1}f}_{L^p(\mu)}\\
     &\lesssim \nrm{b}_{\BMO_\nu^\alpha} [\lambda]_{A_q}^{\max\cbrace{\frac{1}{p'}+\frac{1}{q},\frac{1}{q-1}}} [\mu]_{A_p}^{\max\cbrace{1,\frac{1}{p-1}}} \nrm{f}_{L^p(\mu)},
  \end{align*}
finishing the case $p\leq q$.

  Next, we consider the case $p>q$. We have
\begin{align*}
 \sum_{R\in\widetilde{\mc{S}}: R\subseteq Q} \int_R|b-\ip{b}_R|\cdot  \ip{|f|}_R \dx &= \int_Q \sum_{R\in\widetilde{\mc{S}}:R\subseteq Q}\frac{1}{\nu(R)}\int_R{|b-\ip{b}_R|} \dd x \cdot \ip{|f|}_R \ind_R \dnu \\
  &\leq \int_Q M^{\sharp}_{\nu}b \cdot \mc{A}_{\widetilde{\mc{S}},1}f  \dnu.
\end{align*}
  Thus, combined with \eqref{eq:secondsparse} and Lemma \ref{lemma:weightedsparse}, we obtain for $f \in L^p(\mu)$
  \begin{align*}
     \nrm{\mc{A}_{\mc{S},b}^\star f}_{L^q(\lambda)} &\lesssim \nrmb{\mc{A}_{\mc{S},1}\hab{ M^{\sharp}_{\nu}b \cdot \mc{A}_{\widetilde{\mc{S}},1}f \cdot \nu}}_{L^q(\lambda)} \\
     &\lesssim [\lambda]_{A_q}^{\max\cbrace{1,\frac{1}{q-1}}} \nrmb{ M^{\sharp}_{\nu}b \cdot \mc{A}_{\widetilde{\mc{S}},1}f \cdot \nu}_{L^q(\lambda)}.
  \end{align*}
  Finally, using H\"older's inequality, the definition of $\nu$ and Lemma \ref{lemma:weightedsparse} once more, we obtain
  \begin{align*}
    \nrmb{ M^{\sharp}_{\nu}b \cdot \mc{A}_{\widetilde{\mc{S}},1}f \cdot \nu}_{L^q(\lambda)} &= \nrmb{ M^{\sharp}_{\nu}b \cdot \nu^{1/r} \cdot \mc{A}_{\widetilde{\mc{S}},1}f \cdot \nu^{1/r'} \lambda^{1/q}}_{L^q}\\
    &\leq \nrmb{M^{\sharp}_{\nu}b \cdot \nu^{1/r}}_{L^r} \,\nrmb{\mc{A}_{\widetilde{\mc{S}},1}f \cdot \nu^{1/r'} \lambda^{1/q}}_{L^p}\\
    &= \nrm{M^{\sharp}_{\nu}b}_{L^r(\nu)} \,\nrm{\mc{A}_{\widetilde{\mc{S}},1}f}_{L^p(\mu)}\\
    &\lesssim [\mu]_{A_p}^{\max\cbrace{1,\frac{1}{p-1}}} \nrm{M^{\sharp}_{\nu}b}_{L^r(\nu)} \,\nrm{f}_{L^p(\mu)},
  \end{align*}
  finishing the proof.
\end{proof}

Since $\mc{A}_{\mc{S},b}^\star$ is the formal adjoint of $\mc{A}_{\mc{S},b}$, we can also deduce upper Bloom estimates for $\mc{A}_{\mc{S},b}$ from Proposition \ref{prop:bsparsebounded}.
Combining Theorem \ref{theorem:LOR} and Proposition \ref{prop:bsparsebounded}, we therefore obtain our desired result. 
\begin{theorem}[Bloom upper estimate for commutators]\label{theorem:uppercommutator}
Let $p,q \in (1,\infty)$ and define $\frac1r:=\frac1q-\frac1p$ and $\frac{\alpha}{d} := \frac1p-\frac1q$. Take
$\mu \in A_p$, $\lambda \in A_q$ and set $\nu^{\frac1p+\frac1{q'}} := \mu^{\frac1p} \lambda^{-\frac1q}$. Let $T$ be an $\omega$-Calder\'on-Zygmund operator with $\omega$ satisfying the Dini condition and let $b \in L^1_{\loc}$. We have
  \begin{align*}
  \nrmb{[b,T]}_{L^p(\mu) \to L^q(\lambda)} \lesssim  [\mu]_{A_p}^{\max\cbrace{1,\frac{1}{p-1}}} [\lambda]_{A_q}^{\max\cbrace{1,\frac{1}{q-1}}}\cdot \begin{cases} \nrm{b}_{\BMO_\nu^\alpha} & p\leq q,\\
      \nrm{M^{\sharp}_{\nu}b}_{L^r(\nu)}  \qquad &p>q.
  \end{cases}
\end{align*}
\end{theorem}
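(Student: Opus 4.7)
\medskip\noindent\textbf{Proof plan.} The strategy is to combine the pointwise sparse domination of Theorem~\ref{theorem:LOR} with the Bloom estimate for $\mathcal{A}^\star_{\mc{S},b}$ from Proposition~\ref{prop:bsparsebounded}, supplemented by a duality argument for $\mathcal{A}_{\mc{S},b}$. Since $\omega$ is Dini continuous, $T\colon L^1\to L^{1,\infty}$, so by Lemma~\ref{lemma:commutator_duality} the commutator $[b,T]f$ is well-defined on $L^\infty_c$. Applying Theorem~\ref{theorem:LOR}, the pointwise bound
\[
\absb{[b,T]f(x)} \lesssim \sum_{k=1}^{3^d} \hab{\mc{A}_{\mc{S}_k,b}\abs{f}(x) + \mc{A}^\star_{\mc{S}_k,b}\abs{f}(x)}
\]
reduces matters to proving the stated Bloom bound separately for $\mathcal{A}^\star_{\mc{S},b}$ and $\mathcal{A}_{\mc{S},b}$, with an arbitrary sparse collection $\mc{S}$ in a dyadic lattice.

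For $\mathcal{A}^\star_{\mc{S},b}$, Proposition~\ref{prop:bsparsebounded} applies directly. In the case $p>q$ the exponents already match those claimed. In the case $p\leq q$, I observe that
\[
\tfrac{1}{p'}+\tfrac{1}{q} \;=\; 1+\tfrac{1}{q}-\tfrac{1}{p} \;\leq\; 1,
\]
so $\max\bigl\{\tfrac{1}{p'}+\tfrac{1}{q},\,\tfrac{1}{q-1}\bigr\}\leq \max\bigl\{1,\tfrac{1}{q-1}\bigr\}$, which is what is needed.

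For $\mathcal{A}_{\mc{S},b}$, I invoke duality: since $\mathcal{A}_{\mc{S},b}$ and $\mathcal{A}^\star_{\mc{S},b}$ are formal $L^2$-adjoints,
\[
\nrmb{\mathcal{A}_{\mc{S},b}}_{L^p(\mu)\to L^q(\lambda)} \;=\; \nrmb{\mathcal{A}^\star_{\mc{S},b}}_{L^{q'}(\lambda')\to L^{p'}(\mu')}.
\]
By Lemma~\ref{lemma:muckenhoupt_basics} we have $[\mu']_{A_{p'}}=[\mu]_{A_p}^{1/(p-1)}$ and $[\lambda']_{A_{q'}}=[\lambda]_{A_q}^{1/(q-1)}$, and by Remark~\ref{rem:Bloomsettinginduced} the Bloom weight is invariant under duality. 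Applying Proposition~\ref{prop:bsparsebounded} with parameters $(q',p',\lambda',\mu')$ in place of $(p,q,\mu,\lambda)$, and noting $p\leq q \iff q'\leq p'$ so the cases correspond, a short manipulation using again that $1-1/p+1/q \leq 1$ when $p \leq q$ shows that all dual exponents collapse into $\max\bigl\{1,\tfrac{1}{p-1}\bigr\}$ for $[\mu]_{A_p}$ and $\max\bigl\{1,\tfrac{1}{q-1}\bigr\}$ for $[\lambda]_{A_q}$.

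The proof contains no serious obstacle; the only subtlety is the bookkeeping of weight-characteristic exponents in the duality step. This careful accounting is precisely what recovers the sharp one-weight exponent from \eqref{eq:sharponeweight} in the limit $\mu=\lambda$ and $q\to p$, as anticipated in the remarks following the statement.
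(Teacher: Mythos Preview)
Your proposal is correct and follows essentially the same route as the paper's proof: sparse domination via Theorem~\ref{theorem:LOR}, Proposition~\ref{prop:bsparsebounded} applied directly for $\mc{A}^\star_{\mc{S},b}$, and the same duality argument for $\mc{A}_{\mc{S},b}$ together with the observation $\tfrac{1}{p'}+\tfrac{1}{q}\le 1$ when $p\le q$ to absorb the exponents. The paper's own proof is slightly terser (it writes out only the duality step explicitly and leaves the direct application of Proposition~\ref{prop:bsparsebounded} implicit), but the content is the same.
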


\begin{proof}
  By Proposition \ref{prop:bsparsebounded} and duality, using $\frac{1}{p'} +\frac1q \leq 1$ when $p\leq q$, we have
  \begin{align*}
  \nrm{\mc{A}_{\mc{S},b}}_{L^p(\mu) \to L^q(\lambda)} &= \nrm{\mc{A}_{\mc{S},b}^\star}_{L^{q'}(\lambda') \to L^{p'}(\mu')} \\&\lesssim [\mu']_{A_{p'}}^{\max\cbrace{1,\frac{1}{p'-1}}} [\lambda']_{A_{q'}}^{\max\cbrace{1,\frac{1}{q'-1}}} \cdot\begin{cases} \nrm{b}_{\BMO_\nu^\alpha} & p\leq q\\
      \nrm{M^{\sharp}_{\nu}b}_{L^r(\nu)}  \qquad &p>q
  \end{cases}\\
  &= [\mu]_{A_p}^{\max\cbrace{1,\frac{1}{p-1}}} [\lambda]_{A_q}^{\max\cbrace{1,\frac{1}{q-1}}}\cdot \begin{cases} \nrm{b}_{\BMO_\nu^\alpha} & p\leq q\\
      \nrm{M^{\sharp}_{\nu}b}_{L^r(\nu)}  \qquad &p>q.
  \end{cases}
\end{align*}
Therefore, the theorem follows from Theorem \ref{theorem:LOR} and the density of the bounded, compactly supported functions in $L^p(\mu)$.
\end{proof}

\begin{remark}
For $p=q$, Theorem \ref{theorem:uppercommutator} was already obtained in \cite{HLW17}. We generalized the proof that uses sparse domination from \cite{LOR17}, where the case $p=q$ is considered as well. If $\mu=\lambda=w$ for $w \in A_p$, it is known that the dependence on $[w]_{A_p}$ in Theorem \ref{theorem:uppercommutator} is sharp, see \cite{Ch11,Pe19} and the references therein.

A qualitative version of Theorem \ref{theorem:uppercommutator} for $p\leq q$ was recently obtained in \cite{HOS23}. Tracking the constants in \cite[Theorem 2.4]{HOS23} would yield quantitatively worse behavior in $[\mu]_{A_p}$ and $[\lambda]_{A_q}$ than Theorem \ref{theorem:uppercommutator}. Note that the dependence on $[\mu]_{A_p}$ and $[\lambda]_{A_q}$ in Theorem \ref{theorem:uppercommutator} can be slightly improved in the case $p\leq q$, using the full power of Proposition \ref{prop:bsparsebounded}. Since we do not know if the obtained bound is sharp, we leave this to the interested reader.
\end{remark}

\subsection{Paraproducts}
Next, we consider Bloom upper bounds for paraproducts.
 We start our analysis with a sparse domination result for finite truncations of $\Pi_b$ for $b \in L^1_{\loc}$, which generalizes \cite[Theorem 4.1]{FH22}. We will employ a stopping time argument that has two innovative features: ``coupled stopping conditions'' and "uniformity over parts of the input data".  In our further considerations it will be crucial that this sparse domination result still contains the terms
 $\ipb{\abs{b-\ip{b}_{Q}}}_{Q}$
 rather than the more typical sparse estimate of paraproducts using $\nrm{b}_{\BMO}$.

\begin{theorem}[Sparse domination of paraproducts]\label{theorem:sparsepara}
Let $f,b \in L^1_{\loc}(\R^d)$, and let $\mc{D}$ be a dyadic lattice. For every $Q_0 \in \mc{D}$, there exist a $\frac{1}{2^{d+2}}$-sparse family $\mc{S}\subseteq\mc{D}(Q_0)$ such that for all finite collections $\mc{F} \subseteq \mc{D}(Q_0)$
\begin{equation*}
  \abss{ \sum_{Q \in \mc{F}} D_Qb \ip{f}_Q}\lesssim \sum_{Q \in \mc{S}}\ipb{\abs{b-\ip{b}_{Q}}}_{Q} \ip{\abs{f}}_{Q}\ind_Q.
\end{equation*}
\end{theorem}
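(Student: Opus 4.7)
The plan is to realize $\mc{S}$ as the output of a stopping-time algorithm depending only on $Q_0$, $b$ and $f$ — but not on $\mc{F}$ — and then to dominate the sum $\sum_{Q\in\mc{F}} D_Q b\, \ip{f}_Q$ by partitioning $\mc{F}$ according to smallest $\mc{S}$-ancestors. The ``coupled stopping conditions'' will simultaneously control $\ip{|f|}$ and $\ipb{|b - \ip{b}_S|}$, and the independence of $\mc{S}$ from $\mc{F}$ is exactly the ``uniformity over parts of the input data'' advertised in the introduction to the theorem.

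Concretely, I would set $\mc{S}_0 := \{Q_0\}$ and, given $S\in \mc{S}_n$, declare $\child_{\mc{S}}(S)\subseteq \mc{S}_{n+1}$ to consist of the maximal cubes $Q \subsetneq S$ in $\mc{D}(Q_0)$ satisfying
\[
\ip{|f|}_Q > 2^{d+2}\, \ip{|f|}_S \quad \text{or} \quad \ipb{|b - \ip{b}_S|}_Q > 2^{d+2}\, \ipb{|b - \ip{b}_S|}_S,
\]
and then set $\mc{S} := \bigcup_{n\geq 0} \mc{S}_n$. Disjointness and the defining inequalities give $\bigl|\bigcup_{Q\in\child_{\mc{S}}(S)} Q\bigr| \leq 2\cdot 2^{-(d+2)}|S| = 2^{-(d+1)}|S|$, so the pairwise disjoint sets $E_S := S \setminus \bigcup_{Q\in\child_{\mc{S}}(S)} Q$ satisfy $|E_S| \geq 2^{-(d+2)}|S|$, witnessing $\tfrac{1}{2^{d+2}}$-sparseness.

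Given $\mc{F}$, define $\pi_{\mc{S}}(Q)$ as the smallest $S\in\mc{S}$ containing $Q$ and set $\mc{F}(S) := \{Q \in \mc{F} : \pi_{\mc{S}}(Q) = S\}$. Splitting the sum according to this partition and using that $D_Q b = D_Q(b - \ip{b}_S)$ for every $Q \subseteq S$, it suffices to establish the pointwise estimate
\[
\abss{\sum_{Q \in \mc{F}(S)} D_Q b\, \ip{f}_Q} \lesssim \ipb{|b - \ip{b}_S|}_S\, \ip{|f|}_S\, \ind_S
\]
for each $S \in \mc{S}$ individually. By construction every $Q\in\mc{F}(S)$ with $Q\neq S$ must fail the stopping condition (otherwise it would sit in some $\mc{S}$-child of $S$), so both $\ip{|f|}_Q \lesssim \ip{|f|}_S$ and $\ipb{|b - \ip{b}_S|}_Q \lesssim \ipb{|b - \ip{b}_S|}_S$ hold uniformly on $\mc{F}(S)$.

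The main obstacle is this layerwise pointwise bound, which must be proved without logarithmic losses in the depth of $\mc{F}(S)$: a crude triangle-inequality estimate loses a factor comparable to the length of the chain of cubes in $\mc{F}(S)$ above a typical point. My plan is to exploit the martingale structure by Abel summation against the uniformly bounded weights $\ip{f}_Q/\ip{|f|}_S$, collapsing the telescoping sum of conditional expectations of $b - \ip{b}_S$, whose averages are still controlled by $\ipb{|b - \ip{b}_S|}_S$; alternatively a Burkholder-type weak $L^1$ argument as in Lemma \ref{lemma:burkholder} should work. Either route, the delicate point is the boundary contribution at cubes adjacent to $\child_{\mc{S}}(S)$, and it is here that the \emph{coupling} of the two stopping conditions pays off: whichever of the two averages triggered the stop, the other is still small enough that the product $\ip{|f|}_S \cdot \ipb{|b - \ip{b}_S|}_S$ absorbs the leftover.
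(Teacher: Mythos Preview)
Your sparse family is built from the right data (only $b$, $f$, $Q_0$), but the layerwise pointwise bound you are aiming for,
\[
\abss{\sum_{Q\in\mc{F}(S)} D_Q b\,\ip{f}_Q}\lesssim \ipb{|b-\ip{b}_S|}_S\,\ip{|f|}_S\,\ind_S
\qquad\text{uniformly in }\mc{F},
\]
is \emph{false} under your stopping conditions, and neither Abel summation nor Burkholder will rescue it. The point is that your second stopping condition controls $\ipb{|b-\ip b_S|}_Q$, which on $E_S$ forces only $\|b-\ip b_S\|_{L^\infty(E_S)}\lesssim \ipb{|b-\ip b_S|}_S$; but $\sum_{Q\in\mc{F}(S)} D_Q(b-\ip b_S)$ is a $\{0,1\}$-valued martingale transform of $b-\ip b_S$, and martingale transforms do not map $L^\infty$ to $L^\infty$. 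Concretely, in $d=1$ take $S=[0,1)$, $f=\ind_S$ and $b(x)=\mathrm{sign}(x-\tfrac13)$. Then $|b-\ip b_S|\le\tfrac43$ everywhere, so neither of your stopping conditions ever fires and $\mc{S}=\{S\}$; yet along the dyadic chain $Q_k\ni\tfrac13$ one has $\ip{b}_{Q_k}=(-1)^k/3$, so choosing $\mc{F}=\{Q_{2j}:0\le j<M\}$ gives $\bigl|\sum_{Q\in\mc{F}}D_Qb\bigr|=\tfrac{2M}{3}$ on the interval $Q_{2M}$, while your right-hand side is $\ipb{|b-\ip b_S|}_S\cdot 1\cdot\ind_S\le\tfrac43$. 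The Abel summation you propose fails because the coefficients $a_Q=\ip{f}_Q\ind_{\mc{F}}(Q)$ jump between $0$ and $\ip f_Q$ at every gap of $\mc{F}$, producing unboundedly many boundary terms; Burkholder gives only a weak-$L^1$ estimate, not a pointwise one.

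The paper's proof avoids this by making the \emph{partial paraproduct itself} the second stopping criterion: one stops at the maximal $P\subsetneq S$ for which \emph{some} finite $\mc{F}_P$ makes $\bigl|\sum_{R\in\mc{F}_P:\,P\subsetneq R\subseteq S}D_Rb\,\ip f_R\bigr|$ large on $P$. This tautologically yields the needed pointwise control on $S\setminus\bigcup_kP_k$ and on each $P_k$ just above the stop, uniformly in $\mc{F}$; the work is instead to show the stopping cubes are sparse, and it is \emph{here} that Burkholder's weak-$L^1$ inequality (Lemma~\ref{lemma:burkholder}) enters, bounding the measure of the level set of the worst martingale transform. An extra ``boundary'' term $|D_{\widehat{P}_k}b|\,\ip{|f|}_{\widehat{P}_k}\ind_{P_k}$ survives the iteration and is absorbed by enlarging $\mc{S}$ to include dyadic parents, which is why the final sparseness constant is $2^{-(d+2)}$ rather than $\tfrac12$.
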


\begin{proof}
  We will show that for each $Q \in \mc{D}$ there exists a collection of pairwise disjoint cubes $\cbrace{P_k}_k\subseteq \mc{D}(Q)$ such that $\sum_{k} \abs{P_k} \leq \frac12 \abs{Q}$ and such that
  \begin{align}\label{eq:sparse1steppara}
  \begin{aligned}
      \abss{ \sum_{R \in \mc{D}(Q)\cap \mc{F}} D_Rb  \ip{f}_R} &\leq 2^6 \ipb{\abs{b-\ip{b}_{Q}}}_{Q} \ip{\abs{f}}_{Q} \ind_{Q}
      \\&\hspace{0.2cm} +  \sum_{k} \abs{D_{\widehat{P}_k}b} \ip{\abs{f}}_{\widehat{P}_k} \ind_{P_k} + \sum_{k} \abss{\sum_{R \in \mc{D}(P_k)\cap \mc{F}} D_Rb\ip{f}_R}
  \end{aligned}
  \end{align}
   uniformly over all finite collections $\mc{F}\subseteq \mc{D}$. Note that $$\abs{D_{\widehat{P}_k}b}\ind_{P_k}= \abs{\ip{b}_{P_k}-\ip{b}_{\widehat{P}_k}} \ind_{P_k}.$$ Iterating the estimate \eqref{eq:sparse1steppara}, starting at the cube $Q_0$, we obtain a $\frac12$-sparse collection $\tilde{\mc{S}}\subseteq \mc{D}(Q_0)$ such that
\begin{equation}\label{eq:extrainteresting}
    \begin{aligned}
         \abss{ \sum_{Q \in \mc{F}} D_Qb \ip{f}_Q}&\leq  \sum_{Q \in \tilde{\mc{S}}}2^6 \ipb{\abs{b-\ip{b}_{Q}}}_{Q} \ip{\abs{f}}_{Q} \ind_Q \\&
         \hspace{1cm} +\sum_{Q \in \tilde{\mc{S}} \setminus \cbrace{Q_0}}\abs{\ip{b}_Q-\ip{b}_{\widehat{Q}}} \ip{\abs{f}}_{\widehat{Q}}  \ind_Q
    \end{aligned}
\end{equation}
   uniformly over all finite collections $\mc{F}\subseteq \mc{D}(Q_0)$. Noting that
  $$
  \mc{S}:= \tilde{\mc{S}} \cup \cbraceb{\widehat{Q}: Q \in \tilde{\mc{S}}\setminus \cbrace{Q_0}} \subseteq \mc{D}(Q_0) 
  $$
is $\frac{1}{2^{d+2}}$-sparse and for every $Q \in \mc{D}$ we have
$$
\abs{\ip{b}_Q-\ip{b}_{\widehat{Q}}} \ip{\abs{f}}_{\widehat{Q}}\ind_Q\leq 2^d \ipb{\abs{b-\ip{b}_{\widehat{Q}}}}_{\widehat{Q}} \ip{\abs{f}}_{\widehat{Q}} \ind_{\widehat{Q}}
$$
then yields the result.

\medskip

Let us prove \eqref{eq:sparse1steppara}.   Let $\cbrace{P_k}_k$ be the collection of maximal cubes  $P \in \mc{D}(Q)$ such that 
  \begin{align}
    \label{eq:stopping1} \ip{\abs{f}}_P &> 4 \, \ip{\abs{f}}_Q
    \intertext{ or there exists a finite collection $\mc{F}_P$ such that on  $P$ we have }
   \label{eq:stopping2} \abss{\sum_{R \in \mc{F}_P: P\subsetneq R\subseteq Q} D_Rb \ip{f}_R} &> 2^5 \, \ipb{\abs{b-\ip{b}_{Q}}}_{Q}\ip{\abs{f}}_{Q} =:a_Q.
  \end{align}
We remark that the left-hand side is constant on $P$.
  
  For $R \in \mc{D}(Q)$ define
$$
v_R:=\begin{cases}
\ip{f}_R & R \in \bigcup_{k} \cbrace{S \in \mc{F}_{P_k}: P_k \subsetneq  S\subseteq Q} \\
0 & \text{otherwise}.
\end{cases}
$$
By the stopping condition \eqref{eq:stopping1} we have $\abs{v_R}\leq 4 \ip{\abs{f}}_Q$ for $R \in \mc{D}(Q)$. 
 Therefore, using the stopping conditions \eqref{eq:stopping1} and \eqref{eq:stopping2}, Burkholder's weak $L^1$-inequality for martingale differences (see Lemma \ref{lemma:burkholder}) and the weak $L^1$-boundedness of the dyadic maximal operator, we have
\begin{align*}
  \sum_{k} \abs{P_k} &\leq \abss{ \cbraces{ \absb{\sum_{R \in \mc{D}(Q)} v_R D_Rb}> a_Q }}+\abss{\cbraces{\sup_{R \in \mc{D}(Q)} \ip{\abs{f}}_R > 4\,\ip{\abs{f}}_Q}} \\
  &\leq \frac{2}{a_Q}  \cdot \sup_{R \in \mc{D}(Q)} \abs{v_R} \cdot  \nrms{{\sum_{R \in \mc{D}(Q)} D_Rb}}_{L^1} + \frac{1}{4\, \ip{\abs{f}}_Q} \cdot \nrm{f \ind_Q}_{L^1}\\
  &\leq \frac{2}{2^5 \ipb{\abs{b-\ip{b}_{Q}}}_{Q}\ip{\abs{f}}_{Q}}  \cdot 4\ip{\abs{f}}_Q \cdot  \nrm{b-\ip{b}_Q}_{L^1} + \frac{\abs{Q}}{4}\\
  &= \tfrac12\abs{Q}.
\end{align*}
 Now, to show \eqref{eq:sparse1steppara}, let $\mc{F}\subseteq \mc{D}$ be a finite collection of cubes and write
\begin{align*}
    \abss{ \sum_{R \in \mc{D}(Q)\cap \mc{F}} D_Rb  \ip{f}_R}
    &\leq \abss{ \sum_{R \in \mc{D}(Q)\cap \mc{F}} D_Rb \ip{f}_R} \ind_{Q \setminus \bigcup_k P_k}\\&\hspace{1cm}+\sum_{k} \abss{\sum_{R \in \mc{F}:\widehat{P}_k\subsetneq R\subseteq Q} D_Rb  \angles{f}_R} \ind_{P_k}
    \\&\hspace{1cm}+\sum_{k} \abs{D_{\widehat{P}_k}b} \ip{\abs{f}}_{\widehat{P}_k} \ind_{P_k}
    \\&\hspace{1cm}+
\sum_{k} \abss{\sum_{R \in \mc{D}(P_k)\cap \mc{F}} D_Rb \angles{f}_R}.
\end{align*}
By the stopping condition \eqref{eq:stopping2}, we have 
$$
 \ind_{Q \setminus \bigcup_k P_k}\abss{ \sum_{R \in \mc{D}(Q)\cap \mc{F}} D_Rb \ip{f}_R}\leq \,2^5 \ipb{\abs{b-\ip{b}_{Q}}}_{Q} \ip{\abs{f}}_{Q},
$$
and
$$
\sum_{k} \ind_{P_k}\abss{\sum_{R \in \mc{F}:\widehat{P}_k\subsetneq R\subseteq Q} D_Rb \angles{f}_R}\leq 2^5\, \ipb{\abs{b-\ip{b}_{Q}}}_{Q} \ip{\abs{f}}_{Q},
$$
both uniformly over all finite collections $\cf\subseteq \cd$.
This concludes the proof of \eqref{eq:sparse1steppara} and thus finishes the proof of the theorem.
\end{proof}

\begin{remark}
    The estimate in \eqref{eq:extrainteresting} is interesting in its own right, since it does not use the doubling property of the Lebesgue measure. In fact, it directly generalizes to the setting where one has a locally finite Borel measure $\mu$ on $\R^d$ instead of the Lebesgue measure.
\end{remark}

Let $\mu, \lambda, \nu$ be weights and $f, b \in L^1_{\loc}$. In the case  $1<p \leq q<\infty$  the sparse operator arising in Theorem \ref{theorem:sparsepara} can be estimated as follows
\begin{equation*}
  \nrms{\sum_{Q \in \mc{S}}\ipb{\abs{b-\ip{b}_{Q}}}_{Q} \ip{\abs{f}}_{Q}\ind_Q}_{L^q(\lambda)} \leq \nrms{\sum_{Q \in \mc{S}}\frac{\nu(Q)^{^{\frac{1}{p}+\frac1{q'}}}}{\abs{Q}} \ip{\abs{f}}_{Q}\ind_Q}_{L^q(\lambda)} \nrm{b}_{\BMO_\nu^\alpha}.
\end{equation*}
Therefore, to prove the Bloom-weighted boundedness of the (finitely truncated) paraproduct $\Pi_{b,\mc{F}}f$, one could analyze the Bloom-weighted boundedness of the sparse operator
$$
f \mapsto \sum_{Q \in \mc{S}}\frac{\nu(Q)^{\frac{1}{p}+\frac1{q'}}}{\abs{Q}} \ip{\abs{f}}_{Q} \ind_Q.
$$
In the case $p=q$ and $\mu,\lambda \in A_p$, this operator was studied in \cite[Theorem 3.9]{FH22}. 
Note that, viewing a paraproduct as a bilinear operator, one  expects weighted boundedness to hold for $(\mu,\lambda)$ in a genuinely multilinear weight class (cf. \cite[Section 4.6]{ALM22}). 

The next lemma identifies the canonical weight class for Bloom estimates of paraproducts for all $p,q \in (1,\infty)$. In Proposition \ref{prop:weightedparaproduct}, we will study Bloom estimates for the sparse operator  arising in Theorem \ref{theorem:sparsepara} using this weight class.
\begin{lemma}[Bloom weight class for paraproducts]\label{lem:weightnec}
Let $1<p,q<\infty$ and define $\frac1r:=\frac1q-\frac1p$ and $\frac{\alpha}{d} := \frac1p-\frac1q$. Let $\mu,\lambda,\nu$ be weights and $\mu' = \mu^{-p'/p}$. Suppose that for all $b \in L^1_{\loc}$ we have 
\begin{align*}
        \nrm{\Pi_b}_{L^p(\mu) \to L^q(\lambda)} \lesssim \begin{cases} \nrm{b}_{\BMO_\nu^\alpha} & p\leq q,\\
      \nrm{M^{\sharp}_{\nu}b}_{L^r(\nu)}  \quad &p>q.
  \end{cases}
    \end{align*} 
Then we have $(\mu',\lambda,\nu) \in B_{p',q}$.
\end{lemma}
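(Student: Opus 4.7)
The plan is a testing argument that exploits the collapsing structure of the paraproduct on a single dyadic cube. For each $Q_0 \in \mc{D}$, I would pick two distinct children $R_1, R_2$ of $Q_0$ with $\lambda(R_1)+\lambda(R_2)\gtrsim \lambda(Q_0)$ (taking, e.g., the two children of largest $\lambda$-mass) and set
\[
b := \ind_{R_1} - \ind_{R_2}, \qquad f := \mu'\ind_{Q_0}.
\]
Since $b$ is constant on each child of $Q_0$ and $\int b = 0$, one directly verifies that $D_Q b = 0$ for every $Q\in\mc{D}\setminus\{Q_0\}$: for $Q\supsetneq Q_0$ both $\ip{b}_Q$ and each $\ip{b}_R$ over the children $R$ of $Q$ vanish, and for $Q$ strictly inside a single child of $Q_0$ the function $b$ is locally constant. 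The paraproduct therefore collapses to $\Pi_b f = D_{Q_0}b \cdot \ip{f}_{Q_0} = \ip{\mu'}_{Q_0}(\ind_{R_1}-\ind_{R_2})$.

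Computing the norms on each side of the hypothesis, one finds
\[
\nrm{\Pi_b f}_{L^q(\lambda)} = \ip{\mu'}_{Q_0}\has{\lambda(R_1)+\lambda(R_2)}^{1/q} \gtrsim \ip{\mu'}_{Q_0}\lambda(Q_0)^{1/q},
\]
while $\nrm{f}_{L^p(\mu)} = \mu'(Q_0)^{1/p}$ (using the identity $(\mu')^p\mu = \mu'$). For the $b$-norm on the right-hand side, working in the dyadic interpretation, a dyadic cube $Q$ gives a nonzero contribution only when $Q\supseteq Q_0$, in which case $\ip{b}_Q = 0$ and $\int_Q |b| = 2|R_1|\simeq |Q_0|$; since $\nu(Q)\geq \nu(Q_0)$ for such $Q$, this yields
\[
\nrm{b}_{\BMO_\nu^\alpha} \lesssim \frac{|Q_0|}{\nu(Q_0)^{1+\alpha/d}} \qquad \text{and} \qquad \nrm{M^{\#}_\nu b}_{L^r(\nu)} \lesssim \frac{|Q_0|}{\nu(Q_0)^{1/r'}},
\]
the second estimate following from a telescoping sum over the chain of dyadic ancestors of $Q_0$ against $\nu$.

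Plugging these estimates into the two assumed bounds and using the algebraic identities
\[
1+\frac{\alpha}{d} \;=\; \frac{1}{p}+\frac{1}{q'} \;=\; \frac{1}{r'}
\]
collapses both cases $p\leq q$ and $p > q$ into the single inequality
\[
\mu'(Q_0)^{1/p'}\lambda(Q_0)^{1/q}\nu(Q_0)^{1/p+1/q'} \lesssim |Q_0|^{2},
\]
which after dividing through by $|Q_0|^{1/p'+1/q+1/p+1/q'} = |Q_0|^2$ is precisely $\ip{\mu'}_{Q_0}^{1/p'}\ip{\lambda}_{Q_0}^{1/q}\ip{\nu}_{Q_0}^{1/p+1/q'}\lesssim 1$. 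Taking a supremum over $Q_0\in\mc{D}$ finishes the proof in the dyadic setting; the non-dyadic version of $[(\mu',\lambda,\nu)]_{B_{p',q}}$ then follows from the one-third trick applied to the $3^d$ shifted dyadic lattices. The principal obstacle is the estimate of $\nrm{b}_{\BMO_\nu^\alpha}$ and $\nrm{M^{\#}_\nu b}_{L^r(\nu)}$ when the defining supremum runs over all (non-dyadic) cubes, since weird cubes straddling $R_1$ and $R_2$ could in principle have very small $\nu$-mass; the clean fix is to interpret all norms dyadically, which matches the dyadic nature of $\Pi_b$.
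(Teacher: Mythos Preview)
Your argument is correct and follows essentially the same testing strategy as the paper: pick $f=\mu'\ind_{Q_0}$ and a mean-zero $b$ that is constant on the children of $Q_0$, so that $\Pi_bf$ collapses to a single term, then balance the resulting norms. The only cosmetic differences are that the paper takes $b=\ind_{Q_+}-\ind_{Q_-}$ (the two halves of $Q_0$ along the first axis), which makes $|b|=\ind_{Q_0}$ and avoids your pigeonhole step on $\lambda$, and that the paper bounds $\nrm{M^{\#}_\nu b}_{L^r(\nu)}$ via the identity $M^{\#}_\nu b=\frac{|Q_0|}{\nu(Q_0)}M^\nu\ind_{Q_0}$ together with the $L^r(\nu)$-maximal inequality rather than your telescoping sum over ancestors.
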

\begin{proof}
For a fixed cube $Q \in \mc{D}$, define $b = \ind_{Q_+} - \ind_{Q_-}$, where $Q_+$ and $Q_-$ are the right and left halves of $Q$ along the first coordinate axis respectively. Note that
\begin{align*}
\nrm{b}_{\BMO_\nu^\alpha} &= \frac{\abs{Q}}{\nu(Q)^{\frac1p+\frac1{q'}}} && p\leq q,\\
      \nrm{M^{\sharp}_{\nu}b}_{L^r(\nu)} &=\frac{\abs{Q}}{\nu(Q)} \nrm{M^{\nu}\ind_Q}_{L^r(\nu)} \leq r' \frac{\abs{Q}}{\nu(Q)^{\frac1p+\frac1{q'}}} \quad &&p>q.
\end{align*}
Applying
the boundedness of $\Pi_b$ to $f  := \ind_Q \mu'$, we get
\begin{align*}
 \ip{\mu'}_Q \cdot \nrm{\ind_Q}_{L^q(\lambda)} \lesssim \frac{\abs{Q}}{\nu(Q)^{\frac{1}{p}+\frac1{q'}}}  \cdot \has{\int_Q \mu^{-\frac{p}{p-1}} \dd\mu}^{\frac1p}.
\end{align*}
Rearranging the terms, we obtain
$$
\ip{\mu'}_Q^{\frac{1}{p'}} \ip{\lambda}^{\frac1q}_Q  \ip{\nu}^{\frac{1}{p}+\frac1{q'}}_Q \leq C,
$$
for some $C>0$ independent of $Q$.
\end{proof}

By Lemma \ref{lem:weightnec} and Remark \ref{rem:Bloomsettinginduced} it is canonical to assume the   Bloom relation $\nu^{\frac1p+\frac1{q'}} := \mu^{\frac1p} \lambda^{-\frac1q}$ when studying upper bounds for paraproducts. In this setting, the assumption $(\mu',\lambda,\nu) \in B_{p',q}$ is exactly the same as the assumption that $(\mu,\lambda') \in A_{(p,q')}$, where $A_{(p,q')}$ denotes a multilinear weight class (see \cite{LOPTT09}). This assumption is strictly weaker than $\mu \in A_p$ and $\lambda \in A_q$ (see Lemma \ref{lemma:muckenhoupt_class_bloom}).

In the following proposition, we will prove the Bloom estimates for the sparse operator arising in Theorem \ref{theorem:sparsepara} for weights 
$(\mu',\lambda,\nu) \in B_{p',q}$ satisfying the Bloom relation $\nu^{\frac1p+\frac1{q'}} := \mu^{\frac1p} \lambda^{-\frac1q}$. Note that  $\mu', \lambda,\nu \in A_\infty$ by Lemma \ref{lemma:muckenhoupt_class_bloom}.

\begin{proposition}\label{prop:weightedparaproduct}
Let $1<p,q<\infty$ and define $\frac1r:=\frac1q-\frac1p$ and $\frac{\alpha}{d} := \frac1p-\frac1q$.
Let $(\mu',\lambda,\nu) \in B_{p',q}$ with $\mu'=\mu^{-p'/p}$ and $\nu^{\frac1p+\frac1{q'}} = \mu^{\frac1p} \lambda^{-\frac1q}$.
Let $\mc{D}$ be a dyadic lattice and $\gamma \in (0,1)$. For any $\gamma$-sparse family $\mc{S} \subseteq \mc{D}$ and $b \in L^1_{\loc}$ we have
  \begin{align*}
    \nrms{f \mapsto \sum_{Q \in \mc{S}}\ipb{\abs{b-\ip{b}_{Q}}}_{Q} \ip{\abs{f}}_{Q} \ind_Q}_{L^p(\mu) \to L^q(\lambda)} \lesssim C_{\mu,\lambda,\nu}  \cdot \begin{cases} \nrm{b}_{\BMO_\nu^\alpha} & p\leq q,\\
      \nrm{M^{\sharp}_{\nu}b}_{L^r(\nu)}  \quad &p>q,
  \end{cases}
  \end{align*}
 where
 $$
 C_{\mu,\lambda} = [(\mu',\lambda,\nu)]_{B_{p',q}} \cdot [\mu']_{A_\infty}^{1/p} [\lambda]_{A_\infty}^{1/q'} [\nu]_{A_\infty}^{(1/r)_+}.
 $$
\end{proposition}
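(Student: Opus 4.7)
My plan is to bound the sparse operator by duality: for nonnegative $f \in L^p(\mu)$ and $g \in L^{q'}(\lambda')$, where $\lambda' := \lambda^{-q'/q}$, it suffices to estimate the trilinear form
\[
\Lambda(b,f,g) := \sum_{Q \in \mc{S}} \ipb{|b - \ip{b}_Q|}_Q \ip{f}_Q \ip{g}_Q |Q|
\]
by $C_{\mu,\lambda,\nu}$ times the appropriate $b$-norm times $\nrm{f}_{L^p(\mu)} \nrm{g}_{L^{q'}(\lambda')}$. The first step absorbs $\ipb{|b-\ip{b}_Q|}_Q$ into a norm of $b$. When $p\leq q$, the identity $1+\alpha/d = 1/p+1/q'$ and the definition of $\BMO_\nu^\alpha$ give $\ipb{|b-\ip{b}_Q|}_Q \leq \nrm{b}_{\BMO_\nu^\alpha}\,\nu(Q)^{1/p+1/q'}/|Q|$. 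When $p>q$, the pointwise inequality $\int_Q |b-\ip{b}_Q|\dx \leq \nu(Q)\,M^\#_\nu b(x)$ (valid for every $x\in Q$), averaged against $\nu$ over $Q$, yields $\ipb{|b-\ip{b}_Q|}_Q \leq \ip{\nu}_Q\,\ip{M^\#_\nu b}^\nu_Q$.

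Next I introduce the dual-weight transforms $\tilde f := f/\mu'$ and $\tilde g := g/\lambda$; a direct computation gives $\nrm{\tilde f}_{L^p(\mu')} = \nrm{f}_{L^p(\mu)}$, $\nrm{\tilde g}_{L^{q'}(\lambda)} = \nrm{g}_{L^{q'}(\lambda')}$, together with $\ip{f}_Q = \ip{\tilde f}^{\mu'}_Q \ip{\mu'}_Q$ and the analogue for $g$. Substituting these identities and invoking the $B_{p',q}$-inequality
\[
\ip{\nu}_Q^{1/p+1/q'} \leq [(\mu',\lambda,\nu)]_{B_{p',q}}\,\ip{\mu'}_Q^{-1/p'}\ip{\lambda}_Q^{-1/q},
\]
together with the cancellation $1/p+1/p'+1/q+1/q' = 2$ of the $|Q|$-powers, reduces the task to estimating the unified sum
\[
\Sigma := \sum_{Q \in \mc{S}} \Theta_Q\, \mu'(Q)^{1/p}\lambda(Q)^{1/q'}\, \ip{\tilde f}^{\mu'}_Q \ip{\tilde g}^\lambda_Q,
\]
where $\Theta_Q = 1$ in the case $p\leq q$ and $\Theta_Q = \nu(Q)^{1/r}\ip{M^\#_\nu b}^\nu_Q$ in the case $p>q$.

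In the case $p>q$, the crucial identity $1/r + 1/p + 1/q' = 1$ permits a three-term H\"older inequality separating the three averages, leaving three separate sums of the form $\sum_Q (\ip{\cdot}_Q^w)^s w(Q)$ for $w \in \{\nu,\mu',\lambda\}$. Each of these is controlled by the weighted dyadic Carleson embedding theorem (Lemma \ref{lemma:dyadiccarlesonweight}), using $\mu',\lambda,\nu \in A_\infty$ from Lemma \ref{lemma:muckenhoupt_class_bloom}, and produces exactly the characteristics $[\nu]_{A_\infty}^{1/r}[\mu']_{A_\infty}^{1/p}[\lambda]_{A_\infty}^{1/q'}$. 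In the case $p\leq q$ the three-factor identity fails, as now $1/p+1/q' > 1$; this is the main obstacle. The remedy is to apply two-term H\"older with exponents $(p,p')$ to the pair $(\tilde f,\tilde g)$: the $\tilde f$-sum $\sum_Q \mu'(Q)(\ip{\tilde f}^{\mu'}_Q)^p$ is directly Carleson, while the $\tilde g$-side produces the off-diagonal sum $\sum_Q \lambda(Q)^{p'/q'}(\ip{\tilde g}^\lambda_Q)^{p'}$. The key trick is the elementary $\ell^p$-embedding $\ell^{q'} \hookrightarrow \ell^{p'}$ (valid since $p\leq q$ implies $q'\leq p'$), applied to the scalars $a_Q := \lambda(Q)^{1/q'}\ip{\tilde g}^\lambda_Q$: this bounds $(\sum_Q a_Q^{p'})^{1/p'}$ by $(\sum_Q a_Q^{q'})^{1/q'} = (\sum_Q \lambda(Q)(\ip{\tilde g}^\lambda_Q)^{q'})^{1/q'}$, which is again a Carleson sum producing $[\lambda]_{A_\infty}^{1/q'}\nrm{g}_{L^{q'}(\lambda')}$ with no $\nu$-factor, matching $(1/r)_+ = 0$ for $p\leq q$. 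Collecting all constants and invoking duality then yields the claimed bound.
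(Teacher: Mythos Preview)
Your proof is correct and follows essentially the same route as the paper: dualize, change to weighted averages via $\tilde f=f/\mu'$ and $\tilde g=g/\lambda$, insert the $B_{p',q}$ characteristic, and conclude by the weighted Carleson embedding (Lemma~\ref{lemma:dyadiccarlesonweight}) for each factor. The only cosmetic differences are that the paper treats both regimes at once by applying H\"older's inequality with exponents $(p,q',r_+)$ under the hypothesis $\tfrac1p+\tfrac1{q'}+\tfrac1{r_+}\geq 1$ (which tacitly contains your $\ell^{q'}\hookrightarrow \ell^{p'}$ step when $p\le q$), and for $p>q$ it keeps $\frac{1}{\nu(Q)^{1/r'}}\int_Q|b-\ip{b}_Q|$ intact through H\"older and then uses $\nu$-sparseness (Corollary~\ref{cor:Ainftysparse}) rather than your preliminary bound $\ipb{|b-\ip{b}_Q|}_Q\leq \ip{\nu}_Q\ip{M^{\#}_\nu b}^\nu_Q$ followed by Carleson embedding in $\nu$; both routes produce the same $[\nu]_{A_\infty}^{1/r}$ factor.
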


\begin{proof}
Throughout the proof we will write  $r_+ = \frac{1}{(1/r)_+}$. Note that
 $$
  \nrm{h}_{L^p(\mu)} = \nrm{h / {\mu'}}_{L^p(\mu')}, \qquad h \in L^p(\mu),
  $$
so, by duality, it suffices to show
\begin{align*}
 \sum_{Q \in \mc{S}}& \int_Q \abs{b-\ip{b}_Q}  \ipb{\abs{f}\mu' }_{Q} \ipb{\abs{g} \lambda}_Q \dx \lesssim C_{\mu,\lambda} \nrm{f}_{L^p(\mu')} \nrm{g}_{L^{q'}(\lambda)}\cdot \begin{cases} \nrm{b}_{\BMO_\nu^\alpha} & p\leq q,\\
      \nrm{M^{\sharp}_{\nu}b}_{L^r(\nu)}   &p>q,
      \end{cases}
\end{align*}
for $f \in L^p(\mu')$ and $g \in L^{q'}(\lambda)$. Since $\frac{1}{p}+\frac{1}{q'} + \frac{1}{r_+}\geq 1$, we have by H\"older's inequality and Lemma \ref{lemma:dyadiccarlesonweight}
    \begin{align*}
    \sum_{Q \in \mc{S}}& \int_Q \abs{b-\ip{b}_Q} \cdot \ipb{\abs{f}\mu' }_{Q} \ipb{\abs{g} \lambda}_Q  \dd x\\
    &=     \sum_{Q \in \mc{S}}\frac{1}{\nu(Q)^{\frac{1}{p}+\frac{1}{q'}}} \int_Q \abs{b-\ip{b}_Q} \cdot \ip{\abs{f}}_{Q}^{\mu'} \ip{\abs{g}}_Q^{\lambda}  \ip{\mu'}_Q \ip{\lambda}_Q \cdot \nu(Q)^{{\frac{1}{p}+\frac{1}{q'}}}\dd x\\ 
    &\leq  [(\mu',\lambda,\nu)]_{B_{p',q}} \sum_{Q \in \mc{S}} \frac{1}{\nu(Q)^{\frac{1}{p}+\frac{1}{q'}}} \int_Q \abs{b-\ip{b}_Q} \cdot \ip{\abs{f}}_{Q}^{\mu'} \mu'(Q)^{\frac{1}{p}  }\cdot \ip{\abs{g}}_Q^{\lambda}  \lambda(Q)^{\frac{1}{q'}}\dd x\\
    &\leq[(\mu',\lambda,\nu)]_{B_{p',q}} \has{\sum_{Q \in \mc{S}} \hab{\ip{\abs{f}}_{Q}^{\mu'} }^p \mu'(Q)}^{1/p} \has{\sum_{Q \in \mc{S}} \hab{\ip{\abs{g}}_Q^{\lambda}}^{q'}  \lambda(Q)}^{1/q'} \\&\hspace{1cm} \cdot \has{\sum_{Q \in \mc{S}} \has{\frac{1}{\nu(Q)^{\frac{1}{p}+\frac{1}{q'}}} \int_Q \abs{b-\ip{b}_Q}\dd x}^{r_+}}^{1/r_+}\\ 
    &\leq \frac{pq'}{\gamma^{\frac{1}{p}+\frac{1}{q'}}} \, [(\mu',\lambda,\nu)]_{p,q} [\mu']_{A_\infty}^{1/p} [\lambda]_{A_\infty}^{1/q'} \nrm{f}_{L^p(\mu')} \nrm{g}_{L^{q'}(\lambda)}\\&\hspace{2cm} \cdot \has{\sum_{Q \in \mc{S}}\has{\frac{1}{\nu(Q)^{1/r'}} \int_Q \abs{b-\ip{b}_Q}\dd x}^{r_+}}^{1/r_+}.
  \end{align*}
Now if $p \leq q$, we have $r_+ = \infty$, in which case we see directly that
$$
\has{\sum_{Q \in \mc{S}}\has{\frac{1}{\nu(Q)^{1/r'}} \int_Q \abs{b-\ip{b}_Q}\dd x}^{r_+}}^{1/r_+} \leq  \nrm{b}_{\BMO_\nu^\alpha}.
$$
For the case that $p>q$, we have $r_+=r$ and by Corollary \ref{cor:Ainftysparse}
\begin{align*}
  \has{\sum_{Q \in \mc{S}}&\has{\frac{1}{\nu(Q)^{1/r'}} \int_Q \abs{b-\ip{b}_Q}\dd x}^{r_+}}^{1/r_+}\\
  &\leq\frac{[\nu]_{A_\infty}^{1/r}}{\gamma^{1/r}} \has{\sum_{Q \in \mc{S}}\has{\frac{1}{\nu(Q)} \int_Q \abs{b-\ip{b}_Q}\dd x}^{r}\nu(E_Q)}^{1/r}\\
  &\leq \frac{[\nu]_{A_\infty}^{1/r}}{\gamma^{1/r}} \has{\sum_{Q \in \mc{S}} \int_{E_Q} (M^{\sharp}_\nu b)^r\dd \nu}^{1/r}
  \leq \frac{[\nu]_{A_\infty}^{1/r}}{\gamma^{1/r}} \nrm{M^{\sharp}_\nu b}_{L^r(\nu)},
\end{align*}
finishing the proof.
\end{proof}

Combining Theorem \ref{theorem:sparsepara} and Proposition \ref{prop:weightedparaproduct}, we now obtain a Bloom upper bound for paraproducts for weights  $(\mu',\lambda,\nu) \in B_{p',q}$ with $\nu^{\frac1p+\frac1{q'}} = \mu^{\frac1p} \lambda^{-\frac1q}$. The case $p=q$, $\mu \in A_p$ and $\lambda \in A_q$  has previously been obtained in \cite{FH22}.

\begin{theorem}[Bloom upper estimate for paraproducts]\label{theorem:bloompara}
Let $1<p,q<\infty$ and define $\frac1r:=\frac1q-\frac1p$ and $\frac{\alpha}{d} := \frac1p-\frac1q$.
Let $(\mu',\lambda,\nu) \in B_{p',q}$ with  $\mu' = \mu^{-p'/p}$ and $\nu^{\frac1p+\frac1{q'}} = \mu^{\frac1p} \lambda^{-\frac1q}$. Let $\mc{D}$ be a dyadic lattice. For any $b \in L^1_{\loc}$ we have
  \begin{align*}
    \nrm{\Pi_b}_{L^p(\mu) \to L^q(\lambda)} \lesssim [(\mu',\lambda,\nu)]_{B_{p',q}} [\mu']_{A_\infty}^{1/p} [\lambda]_{A_\infty}^{1/q'}  \cdot \begin{cases} \nrm{b}_{\BMO_\nu^\alpha} & p\leq q,\\
     [\nu]_{A_\infty}^{1/r} \nrm{M^{\sharp}_{\nu}b}_{L^r(\nu)}  \quad &p>q,
  \end{cases}
  \end{align*}
where
$$
\Pi_bf:= \sum_{\substack{Q \in \mc{D}}} D_Qb \ip{f}_Q, \qquad f \in L^p(\mu),
$$
converges unconditionally in $L^q(\lambda)$  if $b \in \BMO_\nu^\alpha$ when $p \leq q$ and if  $M^{\sharp}_{\nu}b \in L^r(\nu)$ when $p>q$.
\end{theorem}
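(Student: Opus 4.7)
The plan is to combine the pointwise sparse domination of finite truncations of $\Pi_b$ from Theorem \ref{theorem:sparsepara} with the Bloom-weighted bound for the resulting sparse operator from Proposition \ref{prop:weightedparaproduct}, and then upgrade the uniform partial-sum bound to the full operator bound and to unconditional convergence.

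Fix $f\in L^\infty_c$ and a finite subcollection $\mc{F}\subseteq\mc{D}$. Since $\mc{F}$ is finite, we may choose a dyadic cube $Q_0\in\mc{D}$ containing both $\supp f$ and every element of $\mc{F}$, so that $\mc{F}\subseteq\mc{D}(Q_0)$. Theorem \ref{theorem:sparsepara} then yields a $\frac{1}{2^{d+2}}$-sparse family $\mc{S}\subseteq\mc{D}(Q_0)$, depending on $Q_0$, $b$, $f$ but \emph{not} on $\mc{F}$, with
\[
  \abs{\Pi_{b,\mc{F}}f}\lesssim\sum_{Q\in\mc{S}}\ipb{\abs{b-\ip{b}_Q}}_Q\ip{\abs{f}}_Q\ind_Q.
\]
Substituting this pointwise estimate into Proposition \ref{prop:weightedparaproduct} immediately gives
\[
  \nrm{\Pi_{b,\mc{F}}f}_{L^q(\lambda)}\lesssim C\,\nrm{f}_{L^p(\mu)},
\]
with $C$ the constant announced in the theorem, uniformly over all finite $\mc{F}\subseteq\mc{D}$. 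Notice that $Q_0$ must be chosen anew for each $\mc{F}$, but the final constant is independent of the choice since it appears only through the intrinsic weight and $b$-data.

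To upgrade this uniform bound on partial sums to unconditional $L^q(\lambda)$-convergence of the full series $\Pi_b f$ on $L^\infty_c$, I would apply the same sparse-plus-weighted-bound argument to difference sums, writing $\Pi_{b,\mc{F}_1}f-\Pi_{b,\mc{F}_2}f=\Pi_{b,\mc{F}_1\setminus\mc{F}_2}f-\Pi_{b,\mc{F}_2\setminus\mc{F}_1}f$. The $\BMO_\nu^\alpha$ (respectively $L^r(\nu)$) integrability of $b$ is precisely what makes the sparse-tail contributions vanish in $L^q(\lambda)$-norm as $\mc{F}_1,\mc{F}_2$ both exhaust $\mc{D}$, yielding a Cauchy net in $L^q(\lambda)$. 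The resulting limit $\Pi_b f$ also realises the $L^1_{\loc}$-convergence prescribed by Definition \ref{def:paraproduct}, in consistency with Lemma \ref{lemma:interchange}, and a density argument extends both the bound and the unconditional convergence to arbitrary $f\in L^p(\mu)$.

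The main technical obstacle is the tail estimate in the last step: from the uniform partial-sum bound one must show that the sparse-sum contribution of ``late'' cubes decays in $L^q(\lambda)$-norm. This decay is exactly what encodes the hypothesis $b\in\BMO_\nu^\alpha$ (or $M^\sharp_\nu b\in L^r(\nu)$) and requires a refinement of the Carleson-embedding step in the proof of Proposition \ref{prop:weightedparaproduct} when applied to sparse subfamilies indexed by such tails. By contrast, the announced operator bound itself drops out essentially for free from the combination of Theorem \ref{theorem:sparsepara} and Proposition \ref{prop:weightedparaproduct}.
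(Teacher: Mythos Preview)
Your norm estimate is essentially the paper's argument: sparse domination (Theorem~\ref{theorem:sparsepara}) followed by the weighted sparse bound (Proposition~\ref{prop:weightedparaproduct}). One minor point: a single dyadic $Q_0$ containing all of $\mc{F}$ and $\supp f$ need not exist in a general lattice (think of cubes in different quadrants of the standard lattice); the paper deals with this by treating quadrants separately. This is cosmetic.

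The genuine gap is in the unconditional convergence, and you correctly flag it yourself. Your plan is to let $Q_0$ grow with $\mc{F}$, obtain a uniform \emph{norm} bound, and then argue Cauchyness via ``sparse-tail contributions vanish''. But a uniform bound on $\nrm{\Pi_{b,\mc{F}}f}_{L^q(\lambda)}$ does not by itself give Cauchyness, and the ``refinement of the Carleson-embedding step'' you allude to is neither stated nor proved. The difficulty is real: since the sparse family $\mc{S}$ changes with $Q_0$ (hence with $\mc{F}$), you have no fixed majorant to feed into dominated convergence, and no obvious smallness of tail pieces.

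The paper avoids this entirely by a simple trick you are missing. Fix $Q_0\in\mc{D}$ containing only $\supp f$ (not $\mc{F}$), and let $\mc{S}\subseteq\mc{D}(Q_0)$ be the sparse family from Theorem~\ref{theorem:sparsepara}. For cubes $Q\supsetneq Q_0$ use the trivial pointwise bound $\abs{D_Qb}\le 2^d\ipb{\abs{b-\ip{b}_Q}}_Q\ind_Q$. Since $\{Q\in\mc{D}:Q_0\subsetneq Q\}$ is a nested chain, the enlarged family $\widetilde{\mc{S}}:=\mc{S}\cup\{Q\in\mc{D}:Q_0\subsetneq Q\}$ is still $\tfrac{1}{2^{d+2}}$-sparse. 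This yields a \emph{single} pointwise dominant
\[
\abss{\sum_{Q\in\mc{F}}D_Qb\ip{f}_Q}\lesssim\sum_{Q\in\widetilde{\mc{S}}}\ipb{\abs{b-\ip{b}_Q}}_Q\ip{\abs{f}}_Q\ind_Q
\]
valid for \emph{every} finite $\mc{F}\subseteq\mc{D}$, with right-hand side independent of $\mc{F}$ and finite a.e.\ by Proposition~\ref{prop:weightedparaproduct}. Unconditional $L^q(\lambda)$-convergence then follows immediately from dominated convergence---no tail analysis, no refinement of Carleson embedding needed.
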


\begin{proof}
  Let $f \in L^p(\mu)$. By density we may assume without loss of generality that $f$ has compact support. Moreover, by considering quadrants separately if needed, we may assume that  there is a $Q_0\in \mc{D}$ such that $\supp{f} \subseteq Q_0$.
  By Theorem \ref{theorem:sparsepara}, we can find a $\frac{1}{2^{d+2}}$-sparse family of cubes $\mc{S} \subseteq \mc{D}(Q_0)$ such that for all finite collections $\mc{F}\subseteq \mc{D}(Q_0)$ we have
\begin{equation*}
  \abss{ \sum_{ Q\in \mc{F}} D_Qb \ip{f}_Q}\lesssim \sum_{Q \in \mc{S}}\ipb{\abs{b-\ip{b}_{Q}}}_{Q} \ip{\abs{f}}_{Q} \ind_Q.
\end{equation*}
Now set $\widetilde{\mc{S}} = \mc{S} \cup \cbrace{Q \in \mc{D}:Q_0 \subsetneq Q}$, which is $\frac{1}{2^{d+2}}$-sparse as well. Since, for any $Q \in \mc{D}$, we have
  \begin{align*}
    \abs{D_Qb} &= \abss{\sum_{P \in \mc{D}:\widehat{P}=Q }\ip{b}_{P}\ind_{P} - \ip{b}_Q \ind_Q}\\
    &\leq \sum_{P \in \mc{D}:\widehat{P}=Q }\ipb{\abs{b-\ip{b}_Q}}_{P}\ind_{P}
    \leq2^d \ipb{\abs{b-\ip{b}_Q}}_{Q} \ind_Q,
  \end{align*}
  it follows that for all finite collections $\mc{F}\subseteq \mc{D}$ we have
  \begin{align*}
  \abss{\sum_{Q \in  \mc{F}} D_Qb \ip{f}_Q} &= \abss{\sum_{Q \in\mc{F}\cap \mc{D}(Q_0)} D_Qb \ip{f}_Q} + \sum_{Q \in \mc{F}: Q_0\subsetneq Q} \abs{D_Qb} \ip{\abs{f}}_Q\\&\lesssim \sum_{Q \in \widetilde{\mc{S}}}\ipb{\abs{b-\ip{b}_{Q}}}_{Q} \ip{\abs{f}}_{Q} \ind_Q.
\end{align*}
Since the right-hand side is finite a.e. and does not depend on $\mc{F}$, this implies that
$
 \sum_{Q \in \mc{D}} \abs{D_Qb \ip{f}_Q}
$
 converges pointwise a.e. and for any choice of $\epsilon_Q \in \cbrace{0,1}$ we have
$$
\abss{\sum_{Q \in \mc{D}} \epsilon_Q D_Qb \ip{f}_Q }\lesssim\sum_{Q \in \widetilde{\mc{S}}}\ipb{\abs{b-\ip{b}_{Q}}}_{Q} \ip{\abs{f}}_{Q} \ind_Q.
$$
  The norm estimate now follows from Proposition \ref{prop:weightedparaproduct} and the unconditional convergence follows from the dominated convergence theorem.
\end{proof}

\begin{remark}\label{rem:sharpFH} Let $p,q\in (1,\infty)$, $\mu \in A_p$ and $\lambda \in A_p$ and set $\nu^{\frac1p+\frac1{q'}} := \mu^{\frac1p} \lambda^{-\frac1q}$. By Lemma \ref{lemma:muckenhoupt_basics} and Lemma \ref{lemma:muckenhoupt_implies_bloom} we have
$$
[(\mu',\lambda,\nu)]_{B_{p',q}} [\mu']_{A_\infty}^{1/p} [\lambda]_{A_\infty}^{1/q'} \lesssim [\mu]^{\frac{1}{p-1}}_{A_p} [\lambda]_{A_q},
$$
so, in particular, Theorem \ref{theorem:bloompara} proves the upper bound in Theorem \ref{thm:mainparaproduct}.

Let $w \in A_2$. In the case $p=q=2$, $\mu=w$, $\lambda = w^{-1}$ and thus $\nu = w$, it was shown in \cite[Section 4.1]{FH22} that there is a $b  \in \BMO_w$ such that
  $$
  \nrm{\Pi_b}_{L^2(w) \to L^2(w^{-1})} \gtrsim \nrm{b}_{\BMO_w} [w]_{A_2}^2
  $$
  In this setting we have
  $$
  [\mu]_{A_2} [\lambda]_{A_2} = [w]_{A_2}^2,
  $$
  so our result, as well as \cite[Theorem 3.9]{FH22}, is sharp. We do not know if Theorem \ref{theorem:bloompara} is also sharp for other parameters.
  \end{remark}

\begin{remark}
We note that the claimed unconditional convergence in Theorem \ref{theorem:bloompara} is not automatic, since we do not have $\lambda \in A_q$ in general (see Lemma \ref{lemma:muckenhoupt_class_bloom}). As a consequence, we note that the functions with finite Haar expansion are not necessarily dense in $L^q(\lambda)$.
\end{remark}

\section{Lower bounds}\label{sec:lower}
Next, we turn to lower bounds for commutators and paraproducts. Throughout the section, recall the following for weights $\mu,\lambda,\nu$: 
\begin{itemize} 
\item 
We set $\lambda':=\lambda^{-q'/q}$ and 
$$
[\mu,\lambda']_{B_{p,q'}(\nu)}:= \sup_Q \has{\frac{\mu(Q)}{\nu(Q)}}^{1/p} \has{\frac{\lambda'(Q)}{\nu(Q)}}^{1/q'}.$$
\item
Suppose $\nu^{\frac1p+\frac1{q'}} = \mu^{\frac1p} \lambda^{-\frac1q}$. Then the condition $(\mu,\lambda')\in B_{p,q'}(\nu)$ implies the conditions $\mu \in A_\infty(\nu)$ and $\lambda'\in A_\infty(\nu)$ by Lemma \ref{lemma:ainfinity_wrt_bloom}. 
\end{itemize}

\subsection{Statement and overview of proof for commutators}
We start by proving that,  in the Bloom setting, the sharp maximal function condition
 $$\norm{M^\#_\nu b}_{L^r(\nu)}<+\infty$$
 is necessary for the $L^p(\mu)\to L^q(\lambda)$ boundedness of commutators $[b, T ]$ in the range $1<q<p<\infty$. We will prove this under weaker assumptions on the weights than $\mu \in A_p$, $\lambda \in A_q$.

The main challenge in the Bloom setting for $1<q<p<\infty$ is finding a condition  that is not only sufficient but also necessary. 
The proof of the lower bound builds upon techniques from \cite{HHL2016,hytonen2021,HOS23}, and upon weighted sparse analysis.

\begin{theorem}[Sharp maximal function condition is necessary in the Bloom setting] \label{thm:lowercommutator} Let $1<q<p<\infty$ and set $\frac1r := \frac1q-\frac1p$.  
Let  $b\in L^1_{\loc}$ and let $K$ be a  non-degenerate $\omega$-Calder\'on--Zygmund kernel.  Assume the following:
\begin{itemize}
\item \emph{(Weights)} Let $\mu$ and $\lambda$ be weights and set $\lambda':=\lambda^{-q'/q}$ and $\nu^{\frac1p+\frac1{q'}} := \mu^{\frac1p} \lambda^{-\frac1q}$. Assume that $\mu,\lambda',\nu$ are doubling and $(\mu,\lambda')\in B_{p,q'}(\nu)$. 
\item \emph{(Boundedness)} Assume that $U_b:L^p(\mu)\to L^q(\lambda)$ is a bounded linear operator.
\item \emph{(Off-support bilinear integral form representation)} Assume that 
$$
\int_{\R^d} gU_bf\dx=\int_{\R^d}  \int_{\R^d}  (b(y)-b(x))K(y,x)f(x)g(y) \dx \dy
$$
whenever the functions $f\in L^\infty_c$ and $g\in L^\infty_c$ have supports separated by a positive distance. 
\end{itemize}
Then  
$$
\norm{M^\#_\nu b}_{L^r(\nu)}\lesssim_{\mu,\lambda',\nu} [\mu,\lambda']_{B_{p,q'}(\nu)}[\mu]_{A_\infty(\nu)}^{1/p'}[\lambda']_{A_\infty(\nu)}^{1/q} \norm{U_b}_{L^p(\mu)\to L^q(\nu)},
$$
where the implicit constant depends on the doubling constants of $\mu,\lambda',\nu$.
\end{theorem}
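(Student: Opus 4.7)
The plan is to dominate $\norm{M^\#_\nu b}_{L^r(\nu)}$ by an $\ell^r(\nu)$-sum over a $\nu$-sparse stopping family, estimate each term via a testing argument based on the non-degeneracy of $K$, and reassemble through $\ell^r$--$\ell^{r'}$ duality combined with the weighted sparse basic lemma, extracting the claimed weight characteristics from the Bloom relation. By Corollary~\ref{lemma:maximal_dyadic_nondyadic}, since $\nu$ is doubling, it suffices to bound $\norm{M^\#_{\nu,\mathcal{D}} b}_{L^r(\nu)}$ for each of the $3^d$ shifted dyadic lattices. Fix one such $\mathcal{D}$ and a large initial cube $Q_0 \in \mathcal{D}$, and run a principal cubes stopping time on the functional $\alpha_S := \frac{1}{\nu(S)}\int_S \abs{b - \ip{b}_S}\dx$: starting from $Q_0$, recursively declare as stopping descendants of $S$ the maximal $P \subsetneq S$ with $\alpha_P > 4\alpha_S$, where the factor $4$ is tuned so that $\sum_P \nu(P) \leq \tfrac12\nu(S)$, making $\mathcal{S} \subseteq \mathcal{D}(Q_0)$ a $(\tfrac12,\nu)$-sparse family. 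By maximality of the stopping cubes one has the pointwise domination $M^\#_{\nu,\mathcal{D}(Q_0)}b(x) \lesssim \sum_{S \in \mathcal{S}} \alpha_S \mathbf{1}_S(x)$, and Lemma~\ref{lemma:basic_lemma_carleson} with measure $\nu$ and exponent $r$ yields
$$\norm{M^\#_{\nu,\mathcal{D}(Q_0)}b}_{L^r(\nu)} \lesssim \Big(\sum_{S \in \mathcal{S}} \alpha_S^r\, \nu(S)\Big)^{1/r}.$$

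For each $S \in \mathcal{S}$, the non-degeneracy of $K$ together with the Dini continuity of $\omega$ provides a companion cube $\tilde{S}$ with $\ell(\tilde{S}) \sim \ell(S)$, $\dist(S,\tilde{S}) \sim \ell(S)$, on which $\abs{K(y,x)} \gtrsim \abs{S}^{-1}$ with constant sign for $(y,x) \in S \times \tilde{S}$ (the sign being frozen by shrinking $\tilde S$ relative to $\omega$). Let $m_S$ be a median of $b$ on $S$ and $m'_{\tilde S}$ a median on $\tilde S$. Testing the off-support bilinear representation of $U_b$ against
$$g_S := \operatorname{sgn}(b - m_S)\,\mathbf{1}_S, \qquad f_S := \operatorname{sgn}(b - m'_{\tilde S})\,\mathbf{1}_{\tilde S},$$
and splitting $b(y) - b(x) = (b(y) - m_S) - (b(x) - m_S)$ inside $\int\!\int (b(y) - b(x))K(y,x) f_S(x) g_S(y)\,dx\,dy$, the favorable sign term contributes $\gtrsim \int_S \abs{b - m_S}\dx$ while the $\tilde{S}$-side error cancels to leading order by the median choices. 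This yields the core bound
$$\alpha_S\,\nu(S) \lesssim \int_S \abs{b - m_S}\dx \lesssim \abs{\ip{g_S, U_b f_S}}.$$

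By $\ell^r(\nu(S))$--$\ell^{r'}(\nu(S))$ duality, bounding $\sum_S \alpha_S^r \nu(S)$ reduces to estimating $\sum_S \alpha_S \beta_S \nu(S)$ uniformly over non-negative $\beta$ with $\sum_S \beta_S^{r'}\nu(S) \leq 1$. Absorbing signs into $g_S$ so that each pairing $\ip{g_S, U_b f_S}$ is non-negative, and splitting $\mathcal{S}$ into $O(1)$ sub-collections in which both $\{S\}$ and $\{\tilde S\}$ are pairwise disjoint (via pigeonhole over dyadic scales and the fixed geometric relation between $S$ and $\tilde S$), within one sub-collection the linearity of $U_b$ gives
$$\sum_S \alpha_S \beta_S \nu(S) \lesssim \sum_S \beta_S\, \ip{g_S, U_b f_S} = \ip{G, U_b F},$$
with $F := \sum_S f_S$ and $G := \sum_S \beta_S g_S$, so the sum is bounded by $\norm{U_b}_{L^p(\mu) \to L^q(\lambda)}\,\norm{F}_{L^p(\mu)}\,\norm{G}_{L^{q'}(\lambda')}$.

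For the final norm estimates, Lemma~\ref{lemma:ainfinity_wrt_bloom} upgrades the hypothesis $(\mu,\lambda') \in B_{p,q'}(\nu)$ to the quantitative $A_\infty(\nu)$-bounds on $\mu$ and $\lambda'$. Using the disjointness within the sub-collection and Lemma~\ref{lemma:basic_lemma_sparse_generic} applied to the sparse sums defining $F$ and $G$ (with possibly weight-adapted scalings $\tau_S,\sigma_S$ in $f_S$ and $g_S$ to balance the two norms against the constraint on $\beta$), and feeding in the pointwise Bloom bound $\mu(S)^{1/p}\lambda'(S)^{1/q'} \leq [\mu,\lambda']_{B_{p,q'}(\nu)}\, \nu(S)^{1/p+1/q'}$ via doubling of $\mu$ (so that $\mu(\tilde S) \sim \mu(S)$), one extracts the factor $[\mu,\lambda']_{B_{p,q'}(\nu)}\,[\mu]_{A_\infty(\nu)}^{1/p'}[\lambda']_{A_\infty(\nu)}^{1/q}\,\norm{U_b}_{L^p(\mu)\to L^q(\lambda)}$ and matches the remaining sparse quantity against $\sum_S \beta_S^{r'}\nu(S) \leq 1$. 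Sending $Q_0 \uparrow \R^d$ by monotone convergence finishes the proof. The main obstacle is this last paragraph: choosing the test-function scalings and orchestrating the $\tilde S$-to-$S$ doubling transfer together with the application of the weighted sparse lemma so that precisely the stated powers $[\mu,\lambda']_{B_{p,q'}(\nu)}$, $[\mu]_{A_\infty(\nu)}^{1/p'}$, and $[\lambda']_{A_\infty(\nu)}^{1/q}$ (and no stray factors) emerge in the final bound.
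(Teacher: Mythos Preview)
Your overall architecture---discretize the sharp maximal norm over a $\nu$-sparse family, control each oscillation by a bilinear pairing via non-degeneracy, then dualize and reassemble---matches the paper's three-step proof (Lemmas~\ref{lemma:discretization}, \ref{lemma:oscillation_by_operator}, \ref{lemma:necessary_general}). The first two steps are essentially correct, though note that Lemma~\ref{lemma:oscillation_by_operator} does not require Dini continuity of $\omega$; the modulus alone suffices to freeze the sign of $K$ on a small enough companion cube.

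The genuine gap is in your decoupling step. You claim one can split the sparse family $\mathcal{S}$ into $O(1)$ sub-collections in which the cubes $\{S\}$ (and the companion cubes $\{\tilde S\}$) are pairwise disjoint, by ``pigeonhole over dyadic scales''. This is false: a sparse family typically contains arbitrarily long nested chains $S_1\supsetneq S_2\supsetneq\cdots$ across infinitely many scales (e.g.\ $[0,2^{-k})$ for all $k\geq 0$ is $\tfrac12$-sparse), so no finite partition into disjoint sub-collections exists. Consequently your identity $\sum_S \beta_S\ip{g_S,U_b f_S}=\ip{G,U_bF}$ is unjustified: expanding $\ip{G,U_bF}$ produces all the cross terms $\ip{g_T,U_b f_S}$ with $T\neq S$, which you have no mechanism to control.

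The paper resolves this with a randomization trick (Lemma~\ref{lemma:necessary_general}): introduce independent Rademacher signs $(\epsilon_S)_{S\in\mathcal S}$ and use $\mathbb{E}\,\epsilon_S\epsilon_T=\delta_{S,T}$ to write
\[
\sum_{S}\beta_Sg_{R_S}\,U(\alpha_S f_{Q_S})
=\mathbb{E}\int_{\R^d}\Big(\sum_{T}\epsilon_T\beta_Tg_{R_T}\Big)\,U\Big(\sum_{S}\epsilon_S\alpha_Sf_{Q_S}\Big)\dx,
\]
which kills the cross terms \emph{in expectation} without any disjointness. Moreover, the paper makes the scalings you leave as ``possibly weight-adapted $\tau_S,\sigma_S$'' completely explicit: one first uses the Bloom characteristic pointwise as $\gamma_S\leq[\mu,\lambda']_{B_{p,q'}(\nu)}\,\alpha_S\beta_S$ with $\alpha_S:=(\gamma_S^{r'}\nu(S)/\mu(S))^{1/p}$ and $\beta_S:=(\gamma_S^{r'}\nu(S)/\lambda'(S))^{1/q'}$, and it is precisely this splitting, combined with Lemma~\ref{lemma:basic_lemma_sparse_generic} applied separately to $\sum_S\alpha_S\mathbf{1}_{aS}$ in $L^p(\mu)$ and $\sum_S\beta_S\mathbf{1}_{aS}$ in $L^{q'}(\lambda')$, that produces exactly the powers $[\mu]_{A_\infty(\nu)}^{1/p'}$ and $[\lambda']_{A_\infty(\nu)}^{1/q}$ with no stray factors.
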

\begin{remark}[Unweighted setting]In the unweighted case $\mu=\lambda=\nu=1$ the theorem recovers \cite[Theorem 2.5.1]{hytonen2021} because of the well-known comparison
$$
\norm{M^\# b}_{L^r}\eqsim \inf_c \norm{b-c}_{L^r}.
$$
\end{remark}

From Theorem \ref{thm:lowercommutator}  we can easily deduce the lower bound in Theorem \ref{thm:maincommutator}.

\begin{corollary}[Lower bound for commutator]\label{corollary:commutator}Let $1<q<p<\infty$, $\mu \in A_p$ and $\lambda \in A_q$. Set $\frac1r := \frac1q-\frac1p$ and $\nu^{1/p+1/q'} := \mu^{1/p} \lambda^{-1/q}$.
Let $T$ be a non-degenerate $\omega$-Calder\'on-Zygmund operator with $\omega$ satisfying the Dini condition, and let $b \in L^1_{\loc}$. Then
$$
     \nrm{M^{\sharp}_{\nu}b}_{L^r(\nu)}\lesssim C_{\mu,\lambda}\nrmb{[b,T]}_{L^p(\mu) \to L^q(\lambda)}.
$$
\end{corollary}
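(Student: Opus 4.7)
The plan is to derive Corollary \ref{corollary:commutator} as an immediate specialization of Theorem \ref{thm:lowercommutator} to the operator $U_b := [b,T]$. If $\nrm{[b,T]}_{L^p(\mu) \to L^q(\lambda)} = \infty$ there is nothing to prove, so we may assume the commutator is bounded from $L^p(\mu)$ to $L^q(\lambda)$, which gives the boundedness hypothesis of Theorem \ref{thm:lowercommutator}. The remaining work is to verify the weight hypothesis and the off-support bilinear integral form representation, and then to absorb the resulting weight characteristics into a single constant $C_{\mu,\lambda}$.

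For the weight hypothesis, Muckenhoupt weights are doubling, so $\mu$ and $\lambda$ are doubling; the dual weight $\lambda' = \lambda^{-q'/q}$ is then doubling by the duality identity in Lemma \ref{lemma:muckenhoupt_basics}. Lemma \ref{lemma:muckenhoupt_class_bloom}\ref{it:Bloomw2} gives $\nu \in A_{2r'}$, so $\nu$ is doubling as well. The joint condition $(\mu,\lambda') \in B_{p,q'}(\nu)$, together with the quantitative estimate $[\mu,\lambda']_{B_{p,q'}(\nu)} \leq [\mu]_{A_p}^{1/p}[\lambda]_{A_q}^{1/q}$, follows directly from Lemma \ref{lemma:muckenhoupt_implies_bloom}.

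For the off-support bilinear form representation, the Dini continuity of $\omega$ entails, by standard Calder\'on--Zygmund theory, that $T : L^1 \to L^{1,\infty}$ is bounded, so Lemma \ref{lemma:commutator_duality} applies. For $f,g \in L^\infty_c$ with supports separated by a positive distance, expanding
$$
\ip{g,[b,T]f} = \ip{b, gTf - fT^*g}
$$
via the kernel representation of $T$ and $T^*$ on the off-support sets, and applying Fubini's theorem (the integrand being integrable by the kernel size estimate in Definition \ref{def:czkernel}), yields the required identity
$$
\ip{g,[b,T]f} = \int_{\R^d}\int_{\R^d} (b(y)-b(x))\,K(y,x)\,f(x)\,g(y)\dx\dy.
$$

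With the three hypotheses verified, Theorem \ref{thm:lowercommutator} gives
$$
\nrm{M^{\sharp}_\nu b}_{L^r(\nu)} \lesssim_{\mu,\lambda,\nu} [\mu,\lambda']_{B_{p,q'}(\nu)}\,[\mu]_{A_\infty(\nu)}^{1/p'}\,[\lambda']_{A_\infty(\nu)}^{1/q}\, \nrmb{[b,T]}_{L^p(\mu)\to L^q(\lambda)}.
$$
By Lemma \ref{lemma:muckenhoupt_implies_bloom} and Lemma \ref{lemma:ainfinity_wrt_bloom}, each weight characteristic on the right is finite and quantitatively controlled by $[\mu]_{A_p}$ and $[\lambda]_{A_q}$; together with the doubling constants of $\mu,\lambda,\nu$ (themselves controlled by $[\mu]_{A_p}$ and $[\lambda]_{A_q}$) these amalgamate into a single constant $C_{\mu,\lambda}$, yielding the claimed bound. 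There is no real obstacle here: the corollary is a straightforward packaging consequence of Theorem \ref{thm:lowercommutator} once one checks that the commutator falls within its abstract framework.
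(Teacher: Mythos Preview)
Your proof is correct and follows essentially the same route as the paper's own argument: verify that the hypotheses of Theorem \ref{thm:lowercommutator} (doubling of $\mu,\lambda',\nu$; the joint Bloom condition via Lemma \ref{lemma:muckenhoupt_implies_bloom}; and the off-support kernel representation via Lemma \ref{lemma:commutator_duality} using Dini continuity) are met, then apply the theorem and collect the weight characteristics into $C_{\mu,\lambda}$. Your write-up is slightly more detailed (explicitly disposing of the case $\nrm{[b,T]}_{L^p(\mu)\to L^q(\lambda)}=\infty$ and invoking Lemma \ref{lemma:ainfinity_wrt_bloom} to bound $[\mu]_{A_\infty(\nu)}$ and $[\lambda']_{A_\infty(\nu)}$), but the strategy is identical.
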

\begin{proof}[Proof of Corollary \ref{corollary:commutator}]This follows from observing that the assumptions of Corollary \ref{corollary:commutator} imply the assumptions of Theorem \ref{thm:lowercommutator}, as follows.
From $\mu \in A_p$ and $\lambda\in A_q$ (or equivalently $\lambda'\in A_{q'}$) it follows that $(\mu,\lambda')\in B_{p,q'}(\nu)$ by Lemma \ref{lemma:muckenhoupt_implies_bloom} and that $\nu \in A_{2r'}$ by Lemma \ref{lemma:muckenhoupt_class_bloom}. In particular, $\mu,\lambda',\nu$ are doubling since they are $A_\infty$-weights. Since $\omega$ satisfies the Dini condition, $T$ is bounded from $L^1$ to $L^{1,\infty}$. Therefore, by Lemma \ref{lemma:commutator_duality}, we have the kernel representation
$$\int f[b,T]g= \int_{\R^d} \int_{\R^d}  (b(x)-b(y))K(x,y)f(y)g(x) \dy \dx$$
whenever the functions $f\in L^\infty_c$ and $g\in L^\infty_c$ have supports separated by a positive distance.
\end{proof}

\begin{proof}[Proof of Theorem \ref{thm:lowercommutator}] 
The proof proceeds in three main steps: (1) discretization of the Lebesgue norm of the sharp maximal function, (2) control of mean oscillations by bilinear forms, and (3) use of a sequential-type testing condition on bilinear forms for general abstract operators. Each of these steps is stated  as a separate lemma. How the lemmas are combined to yield the theorem is detailed in what follows. 

First, we discretize. Fix a parameter $\gamma\in(0,1)$.  By combining Corollary \ref{lemma:maximal_dyadic_nondyadic} and Lemma \ref{lemma:discretization}, we obtain
$$
\norm{M^\#_{\nu}b}_{L^r(\nu)} \eqsim_{\nu} \sup_{\cd}\sup_{\cs\subseteq \cd} \has{\sum_{S\in \cs} \has{ \frac{1}{\nu(S)}\int_S \abs{b-\angles{b}_S}\dx}^r \nu(S) }^{1/r},
$$
where the supremum is taken over all dyadic lattices $\cd$ and over all collections $\mc{S} \subseteq \mc{D}$ that are  $(\gamma,\nu)$-sparse.

Second, we control each oscillation by testing a bilinear form against a pair of test functions. By Lemma \ref{lemma:oscillation_by_operator}, for each cube $S\in\cs$ there exist a cube $\tilde{S}$ with $\ell(\st)\sim \ell(S)$ and $\dist(\st,S)\sim \ell(S)$ and functions $f_S^i,g_{\st}^i$ with $\abs{f_S^i}\lesssim 1_S$, $\abs{g_{\st}^i}\lesssim 1_{\st}$ for $i=1,2$ such that
$$
\int_S \abs{b-\angles{b}_S} \dx \lesssim \sum_{i=1}^2\abss{\int_{\R^d} g_{\st}^i U_b f_S^i\dx }
$$
Therefore,
\begin{equation*}
\begin{split}
&\has{\sum_{S\in \cs} \has{\frac{1}{\nu(S)}\int_S \abs{b-\angles{b}_S}\dx}^r \nu(S) }^{1/r}
\leq \sum_{i=1}^2\has{\sum_{S\in \cs} \abss{\frac{1}{\nu(S)}\int_{\R^d} g_{\st}^i U_b f_S^i \dx}^r \nu(S) }^{1/r}.
\end{split}
\end{equation*}

Finally, the right hand-side, the so-called sequential testing condition on bilinear forms, is bounded by the operator norm by Lemma \ref{lemma:necessary_general}, which completes the proof.
\end{proof}

\subsection{Statement and overview of proof for paraproducts}
The lower bound for dyadic paraproducts is stated and proved as for commutators, except that the structure of dyadic paraproducts, in particular the use of dyadic cubes instead of generic cubes, simplifies estimations.

\begin{theorem}[Necessity in the Bloom setting for paraproducts] \label{theorem:lowerparaproduct4}Let $p,q\in(1,\infty)$, set $\frac1r := \frac1q-\frac1p$ and $\frac\alpha{d} := \frac1p-\frac1q$.  Let $\mu$ and $\lambda$ be weights and set $\lambda':=\lambda^{-q'/q}$ and $\nu^{\frac1p+\frac1{q'}} := \mu^{\frac1p} \lambda^{-\frac1q}$. Let $b \in L^1_{\loc}$, assume that $\Pi_b:L^p(\mu)\to L^q(\lambda)$ is bounded and  $(\mu,\lambda')\in B_{p,q'}(\nu)$. Then the following hold:
\begin{align*}
    \norm{b}_{\BMO_\nu^\alpha}&\lesssim [\mu,\lambda']_{B_{p,q'}(\nu)} \norm{\Pi_b}_{L^p(\mu)\to L^q(\lambda)} && p\leq q,\\
    \norm{M^\#_\nu b}_{L^r(\nu)}&\lesssim [\mu,\lambda']_{B_{p,q'}(\nu)} [\mu]_{A_\infty(\nu)}^{1/p'}[\lambda']_{A_\infty(\nu)}^{1/q} \norm{\Pi_b}_{L^p(\mu)\to L^q(\lambda)} && p>q.
\end{align*}
\end{theorem}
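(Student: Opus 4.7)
The plan is to mirror the three-step structure of the lower-bound proof for commutators (Theorem \ref{thm:lowercommutator}), taking advantage of the fact that, for a dyadic paraproduct, the test pair can be supported in the single cube $S \in \mc{D}$ where we are extracting oscillation — no separated twin cube $\tilde{S}$ is needed. Since $(\mu,\lambda') \in B_{p,q'}(\nu)$ with $\nu^{1/p+1/q'}=\mu^{1/p}\lambda^{-1/q}$, Lemma \ref{lemma:ainfinity_wrt_bloom} gives $\mu,\lambda' \in A_\infty(\nu)$, and by Lemma \ref{lemma:muckenhoupt_class_bloom} we have $\nu \in A_{2r'}$, so in particular $\mu,\lambda',\nu$ are doubling. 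The first step is discretization: for $p > q$, Lemma \ref{lemma:discretization} applied to $\mc{D}$ gives
\begin{equation*}
\nrm{M^\#_\nu b}_{L^r(\nu)} \eqsim_\nu \sup_{\cs} \has{\sum_{S\in\cs}\has{\tfrac{1}{\nu(S)}\int_S \abs{b-\ip{b}_S}\dx}^r \nu(S)}^{1/r}
\end{equation*}
with the supremum over $(\gamma,\nu)$-sparse families $\cs \subseteq \mc{D}$, while for $p \leq q$ the $\BMO_\nu^\alpha$ norm is by definition a supremum over cubes and no discretization is needed.

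The second step extracts the oscillation on each $S \in \mc{D}$ by pairing $\Pi_b$ with a concrete test pair. Take $f_S := \ind_S$ and
\begin{equation*}
g_S := \hab{\operatorname{sgn}(b-\ip{b}_S) - \ipb{\operatorname{sgn}(b-\ip{b}_S)}_S}\ind_S,
\end{equation*}
so that $\abs{g_S} \leq 2 \ind_S$ and $\int_S g_S\dx = 0$. A telescoping and Lebesgue differentiation argument yields $\sum_{R \in \mc{D}(S),\,x \in R} D_R b(x) = b(x) - \ip{b}_S$ a.e.\ on $S$, while each contribution from an ancestor $R \supsetneq S$ is \emph{constant on $S$} by dyadic nestedness, since $S$ lies inside the single child of $R$ containing it. Because $\int_S g_S\dx=0$, the ancestor contributions are annihilated, giving the clean identity
\begin{equation*}
\int g_S\, \Pi_b f_S\dx = \int_S g_S (b - \ip{b}_S)\dx = \int_S \abs{b-\ip{b}_S}\dx.
\end{equation*}
H\"older's inequality then furnishes the per-cube bound $\int_S \abs{b-\ip{b}_S}\dx \leq 2\nrm{\Pi_b}_{L^p(\mu)\to L^q(\lambda)}\,\mu(S)^{1/p}\lambda'(S)^{1/q'}$.

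When $p \leq q$, dividing by $\nu(S)^{1/p+1/q'} = \nu(S)^{1+\alpha/d}$ produces the factor $\has{\mu(S)/\nu(S)}^{1/p}\has{\lambda'(S)/\nu(S)}^{1/q'} \leq [\mu,\lambda']_{B_{p,q'}(\nu)}$, and taking the supremum over dyadic cubes gives the claimed $\BMO_\nu^\alpha$ estimate. When $p > q$, per-cube estimation is no longer summable, and substituting the oscillation identity into the discretized expression of Step~1 instead reduces the problem to the sequential-testing estimate
\begin{equation*}
\has{\sum_{S\in\cs} \abss{\tfrac{1}{\nu(S)} \int g_S\,\Pi_b f_S\dx}^r \nu(S)}^{1/r} \lesssim [\mu,\lambda']_{B_{p,q'}(\nu)}[\mu]_{A_\infty(\nu)}^{1/p'}[\lambda']_{A_\infty(\nu)}^{1/q}\nrm{\Pi_b}
\end{equation*}
with $\abs{f_S},\abs{g_S}\lesssim \ind_S$, which is exactly Lemma \ref{lemma:necessary_general} applied to $U=\Pi_b$. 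The main obstacle is really just the ``constant on $S$'' property for ancestor contributions in Step~2: missing this would force a commutator-style construction using a separated cube $\tilde{S}$, which would propagate an extra Bloom-characteristic factor and obscure the cleaner dyadic bound available here.
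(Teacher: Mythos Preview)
Your overall three-step structure matches the paper's proof exactly, and your choice of test pair $(f_S,g_S)=(\ind_S,\,(\operatorname{sgn}(b-\ip{b}_S)-\ip{\operatorname{sgn}(b-\ip{b}_S)}_S)\ind_S)$ is a clean variant of the paper's Lemma~\ref{lemma:oscillation_paraproduct} (which uses $(\ind_S,\ind_S)$ and obtains the oscillation bound via $\int\abs{\ind_S\Pi_b\ind_S}$); your exact identity is arguably tidier and avoids the truncation/limit argument in that lemma's proof.

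There is, however, a genuine gap in your handling of doubling. Your appeal to Lemma~\ref{lemma:muckenhoupt_class_bloom} for $\nu\in A_{2r'}$ is incorrect: that lemma requires either $(\mu',\lambda,\nu)\in B_{p',q}$ or $\mu\in A_p,\ \lambda\in A_q$, neither of which is among the hypotheses of Theorem~\ref{theorem:lowerparaproduct4}. Consequently you have not established that $\mu,\lambda',\nu$ are doubling, yet Lemma~\ref{lemma:necessary_general} as stated requires exactly this. The paper's proof sidesteps doubling entirely: since the test functions are supported on the dyadic cube $S$ itself (no dilated cube $aS$ ever appears), one may rerun the proof of Lemma~\ref{lemma:necessary_general} with the non-dyadic Lemma~\ref{lemma:basic_lemma_sparse_generic} replaced by its dyadic counterpart Lemma~\ref{lemma:weighted_basic}, which needs only $\mu,\lambda'\in A_\infty(\nu)$ (guaranteed by Lemma~\ref{lemma:ainfinity_wrt_bloom}) and no doubling. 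This is precisely why the constants in the theorem carry no hidden dependence on doubling, unlike the commutator case. Your argument is easily repaired by making this observation, but as written it does not close.
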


\begin{remark}
In this dyadic context the suprema in the weighted sharp maximal function, the space $\BMO_\nu^\alpha$ and in all the weight characteristics are taken over the dyadic lattice associated with the paraproduct.
\end{remark}
\begin{remark}Note that the assumption $(\mu,\lambda')\in B_{p,q'}(\nu)$ is weaker than the assumption $\mu\in A_p$ and $\lambda\in A_q$ by Lemma \ref{lemma:muckenhoupt_implies_bloom}.
\end{remark}

\begin{proof}[Proof of Theorem \ref{theorem:lowerparaproduct4}] The case $q<p$ follows from combining Lemmas \ref{lemma:discretization},  \ref{lemma:oscillation_paraproduct}, and \ref{lemma:necessary_general}.  Note that no dependence on doubling constants occurs in this dyadic context because of the following: 
\begin{enumerate}[(1)]
    \item The dyadic version of the weighted dyadic sharp maximal function is used.
    \item The pairs of test functions that Lemma \ref{lemma:oscillation_paraproduct} gives to Lemma \ref{lemma:necessary_general} are of the form $(f_{Q_S},g_{Q_S})=(1_S,1_S)$. Therefore, inside the proof of Lemma \ref{lemma:necessary_general}, we can use Lemma \ref{lemma:weighted_basic} (dyadic estimate)  instead of Lemma \ref{lemma:basic_lemma_sparse_generic}  (non-dyadic estimate). 
\end{enumerate}
The case $p\leq q$ follows by combining Lemma \ref{lemma:oscillation_paraproduct} and  Lemma \ref{lemma:testing_lower_triangular}. 
\end{proof}

\subsection{Discretizing the norm of the weighted sharp maximal function}

The first step in our proofs for both commutators and paraproducts is the discretization of the $L^r(\nu)$-norm of the weighted sharp maximal function.

\begin{lemma}[Discretized norm of the weighted sharp maximal function]\label{lemma:discretization}Let $b\in L^1_{\mathrm{loc}}$, $\gamma\in(0,1),$ and $\cd$ a dyadic lattice.  Let $\nu$ be a locally finite Borel measure and $r\in(0,\infty)$. Then
\begin{equation*}
 \nrms{\sup_{Q\in\cd} \frac{1_Q}{\nu(Q)}\int_Q \abs{b-\angles{b}_Q}\dx}_{L^r(\nu)}\\
\eqsim \sup_{\cs\subseteq \cd} \has{ \sum_{S\in \cs} \has{ \frac{1}{\nu(S)}\int_S \abs{b-\angles{b}_S}\dx}^r \nu(S) }^{1/r},
\end{equation*}
where the supremum is taken over all  $(\gamma,\nu)$-sparse collections $\mc{S} \subseteq \mc{D}$.
\end{lemma}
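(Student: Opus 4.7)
My plan is to prove the two directions of the equivalence separately, writing $\psi_Q:=\frac{1}{\nu(Q)}\int_Q|b-\angles{b}_Q|\dx$ and $f(x):=\sup_{Q\in\cd}\psi_Q 1_Q(x)$, so that the left-hand side is $\norm{f}_{L^r(\nu)}$. For the ``$\gtrsim$'' inequality, fix any $(\gamma,\nu)$-sparse family $\cs$ with pairwise disjoint kernels $E_S\subseteq S$ of $\nu$-measure $\geq\gamma\nu(S)$. Since the cube $S$ itself appears in the supremum defining $f(x)$ for every $x\in E_S$, one has $\psi_S\leq f(x)$ on $E_S$, and disjointness of the $E_S$ gives $\sum_{S\in\cs}\psi_S^r\nu(S)\leq \gamma^{-1}\sum_{S\in\cs}\psi_S^r\nu(E_S)\leq \gamma^{-1}\norm{f}_{L^r(\nu)}^r$.

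For the converse, I would build a $\frac12$-sparse family $\cs\subseteq\cd$ by a principal-cube stopping time on $\{\psi_Q\}_{Q\in\cd}$ with doubling ratio $2^{1/r}$. Processing the lattice top-down, for each cube $S$ under consideration let $\mathcal{M}(S)$ denote the maximal proper subcubes $S'\subsetneq S$ with $\psi_{S'}>2^{1/r}\psi_S$: if $\nu(\bigcup\mathcal{M}(S))\leq \tfrac12\nu(S)$ accept $S$ into $\cs$ with kernel $E_S:=S\setminus\bigcup\mathcal{M}(S)$ (automatically giving sparseness with constant $\tfrac12$); otherwise skip $S$ entirely and recurse on each element of $\mathcal{M}(S)$. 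The skip step is harmless because the estimate $\sum_{S'\in\mathcal{M}(S)}\psi_{S'}^r\nu(S')>(2^{1/r}\psi_S)^r\cdot\tfrac12\nu(S)=\psi_S^r\nu(S)$ shows that the omitted contribution is absorbed by the stopping children (and hence, iteratively, by accepted $\cs$-cubes further down).

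Let $S(x)$ denote the smallest $\cs$-cube containing $x$; this is well-defined $\nu$-a.e.\ because the doubling recursion must terminate wherever $f(x)<\infty$. I would then verify the pointwise bound $f(x)\leq 2^{1/r}\psi_{S(x)}$. For dyadic $Q\ni x$ with $Q\subseteq S(x)$, the fact that $x\in E_{S(x)}$ lies outside every element of $\mathcal{M}(S(x))$, together with maximality of those elements, forces $\psi_Q\leq 2^{1/r}\psi_{S(x)}$; for $Q\supseteq S(x)$, the chain of recursion cubes connecting $S(x)$ to the top exhibits $\psi$-values at most $2^{-j/r}\psi_{S(x)}$ as one ascends, and a similar maximality argument bounds the intermediate dyadic cubes. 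Raising to the $r$-th power and integrating yields $\norm{f}_{L^r(\nu)}^r\leq 2\sum_{S\in\cs}\psi_S^r\nu(E_S)\leq 2\sum_{S\in\cs}\psi_S^r\nu(S)$, as required.

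The main obstacle is to secure simultaneously the $\tfrac12$-sparseness of $\cs$ and the pointwise control of $f$ by $\psi_{S(x)}$. A pure $\psi$-doubling stopping time can fail sparseness because the stopping children may cover all of $S$, while a pure measure-halving stopping time does not force $\psi$ to grow geometrically along chains and therefore does not control $f$ pointwise. The accept/skip dichotomy above is tuned precisely to break this deadlock, the ratio $2^{1/r}$ being chosen so that the measure gain from skipping exactly balances the $r$-th power loss.
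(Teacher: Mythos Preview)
Your ``$\gtrsim$'' direction is correct. The ``$\lesssim$'' direction, however, has a genuine gap in the accept/skip mechanism.

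The pointwise claim $f(x)\leq 2^{1/r}\psi_{S(x)}$ is not justified. You assert that $x\in E_{S(x)}$, but this need not hold: if the smallest \emph{recursion} cube $T(x)$ containing $x$ is skipped, then $T(x)\subsetneq S(x)$ and $x\in E_{T(x)}\subseteq \bigcup\mathcal{M}(S(x))$, so $x\notin E_{S(x)}$. In that case the correct pointwise bound is only $f(x)\leq 2^{1/r}\psi_{T(x)}$, and since $\psi_{T(x)}>2^{m/r}\psi_{S(x)}$ (where $m\geq 1$ is the number of skipped generations between $S(x)$ and $T(x)$), your inequality fails. Your ``skip is harmless'' estimate $\psi_S^r\nu(S)<\sum_{S'\in\mathcal{M}(S)}\psi_{S'}^r\nu(S')$ controls a \emph{different} quantity and does not repair this: summing the residual contributions $\psi_{T_j}^r\nu(E_{T_j})$ along a chain of $m$ consecutive skipped cubes yields a bound of order $m\cdot\psi^r_{T_m}\nu(T_m)$, and nothing in your construction prevents $m$ from being arbitrarily large.

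The paper avoids this difficulty altogether by passing to the comparable quantity $\tau_Q:=\inf_c\int_Q|b-c|\dx$ before running the stopping time (this is the content of the cited abstract lemma \cite[Lemma~2.4]{HHL2016}). The point is that $\tau$ is \emph{subadditive}: for pairwise disjoint $P_j\subseteq P$ one has $\sum_j\tau_{P_j}\leq\tau_P$. Consequently, if one stops whenever $\tau_Q/\nu(Q)>A\,\tau_P/\nu(P)$, the stopping children $\{P_j\}$ of $P$ satisfy $\sum_j\nu(P_j)<A^{-1}\sum_j\tau_{P_j}\cdot\nu(P)/\tau_P\leq A^{-1}\nu(P)$, so the principal-cube family is automatically $(1-A^{-1},\nu)$-sparse and no accept/skip step is needed. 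With this choice your argument for the pointwise bound (now with $S(x)$ equal to the smallest principal cube, and all principal cubes accepted) goes through verbatim. Note that the oscillation $\int_Q|b-\ip{b}_Q|\dx$ you use is \emph{not} subadditive, which is precisely why your direct approach runs into trouble; replacing it by $\tau_Q$ at the outset (they differ by at most a factor of $2$) removes the obstacle.
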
\begin{proof}
Since for every cube $Q$
$$
\inf_c \int_Q |b-c|\dx \eqsim \int_Q \abs{b - \ip{b}_Q}\dx,
$$
the claimed conclusion is equivalent to the comparison
\begin{equation*}
 \nrms{\sup_{Q\in\cd} \frac{1_Q}{\nu(Q)}\inf_c \int_Q |b-c|\dx}_{L^r(\nu)}\\
\eqsim \sup_{\cs\subseteq \cd} \has{ \sum_{S\in \cs} \has{ \frac{1}{\nu(S)}\inf_c \int_Q |b-c|\dx}^r \nu(S) }^{1/r}.
\end{equation*}
This comparison follows from a standard stopping time argument. Indeed, the classical stopping time argument of principal cubes is abstracted in \cite[Lemma 2.4]{HHL2016}, whose particular case $\tau_Q:=\inf_c \int_Q |b-c|\dx$ recovers the comparison. 
\end{proof}

\subsection{Controlling oscillations by testing bilinear forms against pairs of test functions}
Mean oscillation can be controlled by testing the bilinear form of the operator against pairs of test functions. This is the only step in the proof of the lower bounds that relies on the concrete structure of the operator. 

For commutators $[b,T]$, the argument for the Beurling transform $T=S$ is classical \cite{coifman1976}. The argument for a very general class of Calder\'on--Zygmund singular kernels, together with a discussion on previous results, can be found in  \cite{hytonen2021} (cf. \cite[Proposition 4.2]{HOS23}):

\begin{lemma}[Oscillations are dominated by testing the commutator against test functions;  {\cite{hytonen2021}}]\label{lemma:oscillation_by_operator} Let $b\in L^1_{\loc}(\br^d,\bc)$ and let $K$ be a non-degenerate $\omega$-Calder\'on--Zygmund kernel. 
Let $\ip{g,[b,T]f}$ denote (as convenient self-explanatory abbreviation) the off-support bilinear form
$$
\ip{g,[b,T]f}:=\int_{\R^d} \int_{\R^d} (b(y)-b(x))K(y,x)f(x)g(y) \dx \dy
$$
for  functions $f\in L^\infty_c$ and $g\in L^\infty_c$ with supports separated by a positive distance. 

 Then, for each cube $Q$, there exist a cube $\tilde{Q}$ with $\ell(\qt)\sim \ell(Q)\sim\dist(\qt,Q)$ and functions $f_Q^i,g_{\qt}^i$ with $\abs{f_Q^i}\leq 1_Q$, $\abs{g_{\qt}^i}\leq 1_{\qt}$ for $i=1,2$ such that
$$
\int_Q \abs{b-\angles{b}_Q} \dx \lesssim  \sum_{i=1}^2\absb{\ip{ g_{\qt}^i, [b,T]f_Q^i } }.
$$
\end{lemma}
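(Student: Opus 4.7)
The approach is to locate, for each cube $Q$, a ``testing companion'' $\tilde Q$ at unit scale from $Q$ on which the kernel $K(y,x)$ is essentially constant in argument and bounded below in modulus. I would then use a median-type construction to build test functions adapted to both the sign of $b - \langle b\rangle_Q$ and the argument of $K$. Concretely, write $r := \ell(Q)$ and let $x_Q$ be the centre of $Q$. The non-degeneracy of $K$ supplies a point $y_\ast$ with $|y_\ast - x_Q| \sim r$ and $|K(y_\ast,x_Q)| \gtrsim r^{-d}$. Set $\xi_0 := K(y_\ast,x_Q)/|K(y_\ast,x_Q)| \in \mathbb{T}$ and choose $\tilde Q$ to be a cube containing a neighbourhood of $y_\ast$, with $\ell(\tilde Q) \sim r$ and $\dist(\tilde Q, Q)\sim r$, taken small enough (relative to the modulus of continuity $\omega$) that
\[
\operatorname{Re}\bigl(\overline{\xi_0}\,K(y,x)\bigr) \gtrsim r^{-d} \qquad \text{for all } (y,x)\in \tilde Q \times Q.
\]

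Next, because $b$ may be complex-valued, I would introduce a ``complex median'' $c = c_1 + i c_2$ of $\overline{\xi_0}\,b$ on $Q$, choosing $c_j$ to be a median of the $j$-th component. Setting
\[
f_Q^i(x) := \bigl[\operatorname{sgn}\bigl((\overline{\xi_0}(b(x)-c))^{(i)}\bigr)\bigr]\mathbf 1_Q(x), \qquad g_{\tilde Q}^i := \overline{\xi_0}\,\mathbf 1_{\tilde Q},
\]
one has $|f_Q^i|\leq \mathbf 1_Q$, $|g_{\tilde Q}^i|\leq \mathbf 1_{\tilde Q}$ and, crucially, $\int f_Q^i\,dx = 0$ by the median property. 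The standard fact that the median is within a constant of the $L^1$-minimising constant gives
\[
\sum_{i=1}^2 \operatorname{Re}\Bigl(\int_Q \overline{\xi_0}(b(x)-c)\,f_Q^i(x)\,dx\Bigr) \;\gtrsim\; \int_Q |b-\langle b\rangle_Q|\,dx.
\]

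With these choices in hand, I would expand the off-support bilinear form $\langle g_{\tilde Q}^i, [b,T]f_Q^i\rangle$ and decompose $b(y)-b(x) = -(b(x)-c/\overline{\xi_0}) + (b(y)-c/\overline{\xi_0})$. The $x$-piece, integrated against $\int_{\tilde Q}\overline{\xi_0}K(y,x)\,dy$, produces — thanks to the uniform lower bound on $\operatorname{Re}(\overline{\xi_0}K)$ from the first step and the median identity above — a contribution of the correct form $\gtrsim \int_Q |b - \langle b\rangle_Q|\,dx$. Summing over $i=1,2$ gives the desired lower bound up to an error term from the $y$-piece.

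The main obstacle is controlling this error term
\[
\sum_{i=1}^2\overline{\xi_0} \int_{\tilde Q} (b(y)-c/\overline{\xi_0})\Bigl(\int_Q K(y,x)\,f_Q^i(x)\,dx\Bigr) dy,
\]
since $b(y)-c/\overline{\xi_0}$ is a priori unbounded on $\tilde Q$. The key idea is that the mean-zero property $\int f_Q^i = 0$ allows one to replace $K(y,x)$ by $K(y,x) - K(y,x_Q)$ inside the inner integral, so that the $\omega$-Hölder continuity of the kernel contributes a factor of the form $\omega(\eta)$, with $\eta = \ell(Q)/\dist(\tilde Q,Q)$. The delicate point — and the real technical work of the proof — is to then choose the scale parameter so that this smallness factor absorbs $\int_{\tilde Q}|b - c/\overline{\xi_0}|\,dy$, possibly after a further decomposition $b(y) - c/\overline{\xi_0} = (b(y) - \langle b\rangle_{\tilde Q}) + (\langle b\rangle_{\tilde Q} - c/\overline{\xi_0})$ and iterating the argument over dyadic neighbours, so that the error is a strict fraction of the main term and can be absorbed. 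This balancing act between the lower bound on $|K|$, the smallness of $\omega$ at a small argument, and the size of $|\tilde Q|$ is precisely where the non-degeneracy and modulus-of-continuity hypotheses pay off.
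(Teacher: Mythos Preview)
The paper does not give a proof of this lemma; it is quoted from \cite{hytonen2021} (cf.\ \cite[Proposition 4.2]{HOS22}) and used as a black box. Your plan is recognisably the argument of that reference: use non-degeneracy to produce a companion cube $\tilde Q$ on which $K$ has essentially constant phase and size $\sim (\mathrm{dist}(Q,\tilde Q))^{-d}$, build sign/median test functions so that the $x$-integral recovers $\int_Q|b-\langle b\rangle_Q|$, and treat the $y$-integral as an error to be absorbed. Two remarks on the plan as written.

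First, the claim that the median property alone forces $\int f_Q^i\,dx=0$ is not quite right: if the level set $\{(\overline{\xi_0}(b-c))^{(i)}=0\}$ has positive measure, the sign function need not have mean zero. This is a standard nuisance, fixed by assigning a suitable value in $[-1,1]$ on the level set, but it should be said.

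Second, and more substantially, the absorption of the error is where the genuine content lies, and your sketch (``choose the scale parameter so that this smallness factor absorbs $\int_{\tilde Q}|b-c/\overline{\xi_0}|$'') does not close by itself: no choice of $A$ makes $\omega(1/A)\int_{\tilde Q}|b-c|$ small relative to $\int_Q|b-c|$ without further input, since there is no a~priori control of the $\tilde Q$-oscillation by the $Q$-oscillation. The mechanism in \cite{hytonen2021} is a two-step \emph{approximate weak factorisation}: after the first step the error lives on $\tilde Q$ with a gain $\epsilon_A$; one then runs the identical construction on $\tilde Q$ (this is why the statement has \emph{two} pairs $(f_Q^i,g_{\tilde Q}^i)$, $i=1,2$), producing a second bilinear term and a new error of size $\epsilon_A^2$ times an oscillation on a cube comparable to $Q$, which can now be absorbed for $A$ large. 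Your phrase ``iterating the argument over dyadic neighbours'' points at this, but the plan should make explicit that one iteration does not suffice and that the index $i=1,2$ encodes exactly this second pass.
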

In the case of paraproducts, the argument and test functions are particularly simple.
\begin{lemma}[Oscillations are dominated by testing paraproduct against indicators of cubes]\label{lemma:oscillation_paraproduct} Let $b \in L^1_{\loc}$ and let $\mc{D}$ be a dyadic lattice. Then, for every $Q \in \mc{D}$, we have 
$$\int_Q | b-\angles{b}_Q | \dx\lesssim \int_{\R^d} | 1_Q\Pi_{b,\cd}(1_Q)  | \dx.$$
\end{lemma}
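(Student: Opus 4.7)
The plan is to exploit the fact that $1_Q \Pi_b(1_Q)$ coincides with $b - \langle b\rangle_Q$ on $Q$ up to an additive constant, and then use that the mean of $b - \langle b\rangle_Q$ on $Q$ is zero to absorb this constant.

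First I would split the formal sum defining $\Pi_b(1_Q)$ according to the position of the index $R$ relative to $Q$: one sub-sum over $R \subseteq Q$, one over $R \supsetneq Q$, and one over $R$ disjoint from $Q$. The disjoint terms contribute nothing because $\langle 1_Q\rangle_R = 0$. For $R \subseteq Q$ we have $\langle 1_Q\rangle_R = 1$, and on $Q$ a telescoping of the martingale differences gives
\begin{equation*}
\sum_{R\in\cd(Q),\,\ell(R)\geq 2^{-N}\ell(Q)} D_R b \;=\; \sum_{R'\in\cd:\,\ell(R')=2^{-N-1}\ell(Q)} \langle b\rangle_{R'} 1_{R'} - \langle b\rangle_Q 1_Q,
\end{equation*}
which by Lebesgue differentiation converges a.e.\ on $Q$ to $(b-\langle b\rangle_Q)1_Q$. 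For $R\supsetneq Q$, the function $D_R b$ is constant on the child of $R$ that contains $Q$, so each such term contributes a constant on $Q$, and the sum of these constants defines some $c\in\C$. Thus on $Q$ we have the pointwise identity
\begin{equation*}
1_Q \Pi_b(1_Q) \;=\; (b-\langle b\rangle_Q)1_Q + c\cdot 1_Q.
\end{equation*}

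Once this identity is in hand, the lemma follows immediately: integrating over $Q$ and using $\int_Q (b-\langle b\rangle_Q)\dx = 0$ yields $c\,\abs{Q} = \int_Q 1_Q\Pi_b(1_Q)\dx$, so $\abs{c}\abs{Q} \leq \int_Q \abs{1_Q\Pi_b(1_Q)}\dx$, and the triangle inequality gives
\begin{equation*}
\int_Q \abs{b-\langle b\rangle_Q}\dx \;\leq\; \int_Q \abs{1_Q\Pi_b(1_Q)}\dx + \abs{c}\abs{Q} \;\leq\; 2\int_Q \abs{1_Q\Pi_b(1_Q)}\dx.
\end{equation*}

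The main (and essentially only) technical subtlety is that the definition of $\Pi_b$ in Definition~\ref{def:paraproduct} only guarantees unconditional convergence of the full series $\sum_{R\in\cd} D_Rb\,\langle 1_Q\rangle_R$ in $L^1_{\loc}$, so I need to justify the splitting into three sub-sums and the telescoping identity on $Q$. I would handle this by first working with finite truncations $\cd_n\subseteq\cd$ with $\bigcup_n\cd_n=\cd$ (for which the identity $1_Q \Pi_{b,\cd_n}(1_Q) = (\text{partial telescoping of } b-\langle b\rangle_Q)+c_n\cdot 1_Q$ is trivial) and then passing to the limit using Lemma~\ref{lemma:interchange} together with the $L^1(Q)$-convergence of the telescoping and the convergence in $\C$ of the (numerical) series defining $c$.
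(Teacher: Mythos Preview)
Your proposal is correct and follows essentially the same approach as the paper: both arguments recognize that on $Q$ one has $1_Q\Pi_b(1_Q)=(b-c')1_Q$ for some constant $c'$, use this to get the factor-$2$ bound, and make the formal splitting rigorous via finite truncations together with Lemma~\ref{lemma:interchange}. The only cosmetic difference is that the paper absorbs the constant via $\int_Q|b-\langle b\rangle_Q|\,\dx\leq 2\inf_c\int_Q|b-c|\,\dx$, whereas you identify $c$ by integrating and using $\int_Q(b-\langle b\rangle_Q)\,\dx=0$; these are equivalent.
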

\begin{proof} Let $Q\in\cd$. Using the triangle inequality to replace $\ip{b}_Q$ by a term more appropriate for a paraproduct, we obtain formally
\begin{equation*}
    \begin{split}
    \int_Q | b-\angles{b}_Q | \dx
    &\leq 2 \inf_c  \int_Q | b-c | \dx\\
    &\leq 2 \int_Q | b-\angles{b}_Q+\sum_{R\supsetneq Q} D_Rb \angles{1_Q}_R | \dx\\
    &=2 \int_Q | \sum_{R\subseteq Q} D_Rb \angles{1_Q}_R+\sum_{R\supsetneq Q} D_Rb \angles{1_Q}_R| \dx\\
    &= 2 \int_{\R^d} | 1_Q\Pi_b(1_Q)  | \dx.
    \end{split}
\end{equation*}
Rigorously, we consider the truncations 
$$
\cd_{M,N}:=\{Q\in \cd : 2^{-M}\leq \ell(Q)\leq 2^{N} \},
$$
so that the term
$$
\sum_{R\in \cd_{M,N} : R\supsetneq Q} D_Rb \angles{1_Q}_R
$$
is finite. Recall that for each $f\in L^\infty_c$ the sum  $\Pi_{b,\cd}f$ converges unconditionally in $L^1_{\loc}$ by the definition of dyadic paraproducts. Then, by a similar calculation as above, combined with the unconditional convergence in $L^1(Q)$, we obtain
$$
 \int_Q | b-\angles{b}_Q | \dx\leq \lim_{N\to \infty}\lim_{M\to \infty} \int_{\R^d} |1_Q \Pi_{b,\cd_{M,N}} 1_Q|\dx=\int_{\R^d} |1_Q \Pi_{b} 1_Q|\dx.
$$
\end{proof}

\subsection{Sequential testing condition for general operators}
The final steps in our arguments for the lower bounds for paraproducts and commutators is the necessity of a sequential testing condition for general operators. We consider the cases $p<q$ and $p\geq q$ separately.

\begin{lemma}[Sequential testing condition on bilinear form is necessary]\label{lemma:necessary_general}Let $1<q<p<\infty$ and set $\frac{1}{r}:= \frac{1}{p}-\frac{1}{q}$. Let $\mu$ and $\lambda$ be weights and  $U:L^p(\mu)\to L^q(\lambda)$ a bounded linear operator. Assume the following:
\begin{itemize}
\item \emph{(Weights)}  Set $\lambda':=\lambda^{-q'/q}$ and $\nu^{\frac1p+\frac1{q'}} := \mu^{\frac1p} \lambda^{-\frac1q}$. Assume that  $\mu,\lambda',\nu$ are doubling and  $(\mu,\lambda')\in B_{p,q'}(\nu)$. 
\item \emph{(Test functions)} To each cube $P$ assign a cube $Q_P$ with $\ell(Q_P)\lesssim \ell(P)$ and $\dist(Q_P,P)\lesssim \ell(P)$ and a function $f_{Q_P}$ with $\abs{f_{Q_P}}\lesssim 1_{Q_P}$. Similarly, to each cube $P$ assign a cube $R_P$ with $\ell(R_P)\lesssim \ell(P)$ and $\dist(R_P,P)\lesssim \ell(P)$ and a function $g_{R_P}$ with $\abs{g_{R_P}}\lesssim 1_{R_P}$.

\item \emph{(Sparse collection)} Assume that $\cs$ is a $(\delta,\nu)$-sparse collection of cubes.
\end{itemize}
Then
\begin{equation*}
\begin{split}
&\has{\sum_{S\in \cs} \abss{\frac{1}{\nu(S)}\int_{\R^d} g_{R_S} U f_{Q_S}\dx}^r \nu(S) }^{1/r} \\
&\hspace{2cm}\lesssim_{\mu,\lambda',\nu} [\mu,\lambda',\nu]_{B_{p,q'}(\nu)}[\mu]_{A_\infty(\nu)}^{1/p'}[\lambda']_{A_\infty(\nu)}^{1/q} \norm{U}_{L^p(\mu)\to L^q(\lambda)}.
\end{split}
\end{equation*}
The implicit constant depends on the doubling constants of the weights $\mu,\lambda',\nu$.
\end{lemma}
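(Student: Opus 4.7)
The plan is to pass through $\ell^r$--$\ell^{r'}$ duality, introduce a Rademacher randomisation to decouple the sum into a single bilinear pairing, and then invoke the sparse embedding lemma together with the Bloom characteristic. Since the exponents satisfy $\frac{1}{p}+\frac{1}{q'}+\frac{1}{r}=1$ (equivalently $\frac{1}{p}+\frac{1}{q'}=\frac{1}{r'}$), by $\ell^r$--$\ell^{r'}$ duality with respect to the weighted counting measure $\sum_S \nu(S)\delta_S$ it suffices to bound
$$\Sigma:=\sum_{S\in\mathcal{S}} a_S \Bigl|\int_{\R^d} g_{R_S}\, U f_{Q_S}\dx\Bigr|$$
for arbitrary nonnegative sequences $(a_S)$ with $\sum_S a_S^{r'}\nu(S)\leq 1$.

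Fix signs $\epsilon_S:=\mathrm{sign}\bigl(\int g_{R_S} U f_{Q_S}\dx\bigr)$ and, with the final algebraic step in mind, decompose $a_S=\alpha_S\beta_S$ in the weight-balanced way
$$\beta_S:=\bigl(a_S^{q'}\lambda'(S)/\mu(S)\bigr)^{1/(p+q')},\qquad \alpha_S:=a_S/\beta_S,$$
so that $\alpha_S^{q'}\lambda'(S)=\beta_S^{p}\mu(S)$. To bypass the off-diagonal cross-terms that would obstruct a direct bilinear factorisation, I introduce independent Rademacher signs $(\widetilde\epsilon_S)$ and set $\widetilde F:=\sum_S \widetilde\epsilon_S\beta_S f_{Q_S}$ and $\widetilde G:=\sum_S \widetilde\epsilon_S\epsilon_S\alpha_S g_{R_S}$, first for a finite truncation of $\mathcal{S}$ and then by monotone limit. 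Since $\E[\widetilde\epsilon_S\widetilde\epsilon_{S'}]=\delta_{S,S'}$, a direct expansion yields the key identity
$$\Sigma=\E\Bigl[\int_{\R^d} \widetilde G\cdot U\widetilde F \dx\Bigr]\leq \|U\|_{L^p(\mu)\to L^q(\lambda)}\,\E\bigl[\|\widetilde F\|_{L^p(\mu)}\|\widetilde G\|_{L^{q'}(\lambda')}\bigr],$$
and the pointwise a.s.\ bounds $|\widetilde F|\leq \sum_S\beta_S 1_{Q_S}$ and $|\widetilde G|\leq \sum_S\alpha_S 1_{R_S}$ reduce the task to controlling $\bigl\|\sum_S\beta_S 1_{Q_S}\bigr\|_{L^p(\mu)}\bigl\|\sum_S\alpha_S 1_{R_S}\bigr\|_{L^{q'}(\lambda')}$.

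Since $Q_S,R_S\subseteq C_0 S$ for some fixed $C_0\geq 1$ coming from the stated test function assumptions, the enlarged collection $\{C_0 S\}_{S\in\mathcal{S}}$ is $\nu$-sparse by the doubling of $\nu$. The hypothesis $(\mu,\lambda')\in B_{p,q'}(\nu)$ gives $\mu,\lambda'\in A_\infty(\nu)$ via Lemma~\ref{lemma:ainfinity_wrt_bloom}, so Lemma~\ref{lemma:basic_lemma_sparse_generic} combined with the doubling of $\mu$ and $\lambda'$ produces
$$\Bigl\|\sum_S\beta_S 1_{Q_S}\Bigr\|_{L^p(\mu)}\lesssim_{\mu,\nu} [\mu]_{A_\infty(\nu)}^{1/p'}\Bigl(\sum_S\beta_S^p\mu(S)\Bigr)^{1/p},$$
together with the analogous estimate for the $\alpha_S$-sum with $\lambda'$ and $q'$ in place of $\mu$ and $p$.

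The final algebraic observation is the identity $pq'/(p+q')=r'$, which lets one compute
$$\beta_S^{p}\mu(S)=\alpha_S^{q'}\lambda'(S)=a_S^{r'}\,\mu(S)^{r'/p}\lambda'(S)^{r'/q'};$$
raising the Bloom inequality $\mu(S)^{1/p}\lambda'(S)^{1/q'}\leq [\mu,\lambda']_{B_{p,q'}(\nu)}\,\nu(S)^{1/r'}$ to the $r'$-th power and using the constraint $\sum_S a_S^{r'}\nu(S)\leq 1$ yields $\sum_S\beta_S^{p}\mu(S),\ \sum_S\alpha_S^{q'}\lambda'(S)\leq [\mu,\lambda']_{B_{p,q'}(\nu)}^{r'}$, whose $\tfrac1p$- and $\tfrac1{q'}$-powers multiply to the single factor $[\mu,\lambda']_{B_{p,q'}(\nu)}$ thanks to $\tfrac{r'}{p}+\tfrac{r'}{q'}=1$. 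Assembling everything produces the claimed bound. The main obstacle is the Rademacher decoupling, since it is what ensures that only diagonal terms survive in expectation; once that is arranged, the arithmetic $pq'/(p+q')=r'$ makes the Bloom characteristic enter at precisely the right power.
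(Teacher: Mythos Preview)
Your proof is correct and follows essentially the same route as the paper: $\ell^r$--$\ell^{r'}$ duality, a two-factor split of the dual sequence, Rademacher decoupling, the $L^p(\mu)\to L^q(\lambda)$ bound for $U$, and the sparse embedding Lemma~\ref{lemma:basic_lemma_sparse_generic} applied to the enlarged cubes. The only cosmetic difference is bookkeeping: the paper defines $\alpha_S=(\gamma_S^{r'}\nu(S)/\mu(S))^{1/p}$ and $\beta_S=(\gamma_S^{r'}\nu(S)/\lambda'(S))^{1/q'}$ so that the Bloom constant appears immediately in the inequality $\gamma_S\le[\mu,\lambda']_{B_{p,q'}(\nu)}\alpha_S\beta_S$ and the later sums equal $\sum_S\gamma_S^{r'}\nu(S)$ exactly, whereas you keep $a_S=\alpha_S\beta_S$ exact and postpone the Bloom inequality to the final step via $\beta_S^{p}\mu(S)=\alpha_S^{q'}\lambda'(S)=a_S^{r'}\mu(S)^{r'/p}\lambda'(S)^{r'/q'}$; the identity $pq'/(p+q')=r'$ makes the two computations equivalent.
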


\begin{remark}[Formulation in terms of bilinear form and weak-form boundedness]
As in \cite[Theorem 2.5.1]{hytonen2021}, Lemma \ref{lemma:necessary_general} can be formulated entirely in terms of a bilinear form $I:L^\infty_c\times L^\infty_c\to \bc$. The boundedness then takes the following weaker form: 
$$
\sum_{i=1}^N |I(f_{Q_i},g_{R_i})|\lesssim \nrms{\sum_{i=1}^N \norm{f_{Q_i}}_{L^\infty(Q_i)} 1_{Q_i}}_{L^p(\mu)} \nrms{\sum_{i=1}^N \norm{g_{R_i}}_{L^\infty(R_i)} 1_{R_i}}_{L^{q'}(\lambda')} 
$$
whenever $(f_{Q_i},g_{R_i}) \in L^\infty(Q_i)\times L^\infty(R_i)$ for $i=1,\ldots,N$ and $N\in\bn$. In the case of commutators, the pairs of test functions $(f_{Q_i},g_{Q_i})\in L^\infty(Q_i)\times L^\infty(R_i)$ are such that  $\dist(Q_i,R_i)\sim \ell(Q_i)\sim \ell(R_i)$, as in Lemma \ref{lemma:oscillation_by_operator}.
\end{remark}
\begin{remark}[Weak-form boundedness in case $q\geq p$]When $q\geq p$, the weak-form boundedness becomes
$$
|I(f_{Q},g_{R})|\lesssim \norm{f_{Q}}_{L^\infty(Q)} \norm{g_{R}}_{L^\infty(R)}\mu(Q)^{1/p} \lambda'(Q)^{1/q'}.
$$
For instances of uses of such {\it $L^\infty$-type testing-conditions} in the context of stopping time arguments, see for example \cite{scurry2010,hanninen2016,hanninen2017}. 
\end{remark}

\begin{proof}[Proof of Lemma \ref{lemma:necessary_general}]The argument builds upon ideas from \cite{hytonen2021} and \cite{HOS23}.
By the $\ell^r(\nu)$-$\ell^{r'}(\nu)$ duality, the estimate of the conclusion is equivalent to the estimate
$$
\sum_{S \in \mc{S}} \has{\frac{1}{\nu(S)}\int_{\R^d} g_{R_S} U f_{Q_S}\dx} \gamma_S\nu(S) \lesssim \has{\sum_{S \in \mc{S}} \gamma_S^{r'} \nu(S)}^{1/r'}.
$$
By assumption we have $1=\frac{r'}{q'}+\frac{r'}{p}$, so
$$
\gamma_S\leq [\mu,\lambda']_{B_{p,q'}(\nu)} \has{\gamma_S^{r'} \frac{\nu(S)}{\lambda^{-q'/q}(S)}}^{1/q'} \cdot \left(\gamma_S^{r'}\frac{\nu(S)}{\mu(S)}\right)^{1/p}=:[\mu,\lambda']_{B_{p,q'}(\nu)}\,\beta_S\cdot \alpha_S.
$$
Therefore,
$$
\sum_{S \in \mc{S}} \int_{\R^d} g_{R_S} U f_{Q_S}\dx\, \gamma_S\lesssim_{\mu,\lambda',\nu} \int_{\R^d} \sum_{S \in \mc{S}}  (\beta_S g_{R_S})(U \alpha_S f_{Q_S})\dx.
$$
Randomization is used to decouple the factors in the summands. Let $(\epsilon_S)_{S\in \cs}$ be independent Rademacher variables on a probability space, i.e.  $\be (\epsilon_S\epsilon_T)=\delta_{S,T}$ for all $S,T \in \mc{D}$ and $|\epsilon_S|=1$. Now,
\begin{align*}\int_{\R^d} \sum_{S\in \mc{S}} \beta_Sg_{R_S} U \alpha_S f_{Q_S}\dx&=\be \int_{\R^d} \has{\sum_{T\in \mc{S}}\epsilon_T\beta_Tg_{R_T}} U \has{\sum_{S\in \mc{S}} \epsilon_S \alpha_S f_{Q_S}}\dx\\&=:\be \int \tilde{g}_\epsilon U \tilde{f}_\epsilon \dx.
\end{align*}
By H\"older's inequality and boundedness of $U$ from $L^p(\lambda)$ to $L^q(\mu)$, we have 
\begin{align*}
\int \tilde{g}_\epsilon U \tilde{f}_\epsilon \dx&=\int \hab{\tilde{g}_\epsilon (\lambda')^{-1/q'}} U \hab{\tilde{f}_\epsilon \lambda^{1/q}} \dx\\ &\leq \norm{ \tilde{g}_\epsilon }_{L^{q'}(\lambda')} \norm{U\tilde{f}_\epsilon}_{L^q(\lambda)}\\ &\leq \norm{U}_{L^p(\mu)\to L^q(\lambda)}\norm{ \tilde{g}_\epsilon }_{L^{q'}(\lambda')}  \norm{\tilde{f}_\epsilon}_{L^p(\mu)}. 
\end{align*}
The proof is completed by checking the estimates
\begin{align*}
\norm{\tilde{f}_\epsilon}_{L^p(\mu)}&\leq \has{\sum_S \gamma_S^{r'} \nu(S)}^{1/p},\\
\norm{\tilde{g}_\epsilon}_{L^{q'}(\lambda')}&\leq \has{\sum_S \gamma_S^{r'} \nu(S)}^{1/q'}
\end{align*}
and taking expectations. 

\medskip

We tackle the estimate for $\tilde{f}_\epsilon$; the estimate for $\tilde{g}_\epsilon$ is tackled similarly.  
Let us first write out what suffices to be checked. On the one hand, recalling that $\tilde{f}_\epsilon:=\sum_{S}\epsilon_S \alpha_S f_{Q_S}$ and that by assumption $f_{Q_P}\lesssim 1_{Q_P}$, we see that
$$
\norm{\tilde{f}_\epsilon}_{L^p(\mu)}\lesssim \nrms{\sum_{S\in \mc{S}} \alpha_S 1_{Q_S}}_{L^p(\mu)}.
$$
On the other hand, recalling that
$\alpha_S:=\left(\gamma_S^{r'}\frac{\nu(S)}{\mu(S)}\right)^{1/p}$, we see that
$$
\sum_S \alpha_S^p \mu(S)\leq \sum_S \gamma_S^{r'} \nu(S).
$$
Thus, it suffices to check that
$$
\nrms{\sum_{S\in \mc{S}} \alpha_S 1_{Q_S}}_{L^p(\mu)}\lesssim_{\mu,\nu} \left(\sum_S \alpha_S^p \mu(S)\right)^{1/p}.
$$

This estimate is checked as follows.
By assumption $\dist(Q_S,S)\lesssim \ell(S)$ and $\ell(Q_S)\lesssim \ell(S)$, so there is an $a\geq 1$ such that $a S\supseteq Q_S$. Therefore,
$$
\nrms{\sum_{S\in \mc{S}} \alpha_S 1_{Q_S}}_{L^p(\mu)}\leq\nrms{\sum_{S} \alpha_S 1_{aS}}_{L^p(\mu)}.
$$ 
Since $\cs$ is $(\delta,\nu)$-sparse by assumption, for each $S\in \cs$ there is a $E_S\subseteq S$ such $\nu(E_S)\geq \delta \nu(S)$ and such that the sets $\{E_S\}_{S\in\cs}$ are disjoint. Since $\nu$ is doubling, we have 
$$\nu(E_S)\geq \delta \nu(S)\geq \delta 
 c_{a,\nu}\nu(aS).$$ Since $a\geq 1$, we also have $E_S \subseteq S\subseteq aS$, so $\{aS\}_{S\in\cs}$ is $(\delta  c_{a,\nu},\nu)$-sparse.

Now, since by the preceding $\{aS\}_{S\in\cs}$ is $\nu$-sparse and by assumption $\mu\in A_\infty(\nu)$, it follows by Lemma \ref{lemma:basic_lemma_sparse_generic} that
$$
\nrms{\sum_{S\in \mc{S}} \alpha_S 1_{aS}}_{L^p(\mu)}\lesssim_{\mu,\nu}[\mu]_{A_\infty(\nu)}^{1/p'} \has{\sum_{S\in \mc{S}} \alpha_S^{p} \mu(aS)}^{1/p}. 
$$
Since $\mu$ is doubling, we have $\mu(aS)\lesssim_{a,\mu} \mu(S)$, finishing the proof.
\end{proof}

We end this section with the necessity of testing condition for general operators in the case $p\leq q$, which is much simpler than the case $p>q$.

\begin{lemma}[Testing condition in case {$p\leq q$}, cf. \cite{HOS23}]\label{lemma:testing_lower_triangular}Let $1<p\leq q<\infty$. Let $\mu, \lambda$ and $\nu$ be weights and set $\lambda':=\lambda^{-q'/q}.$ Assume that $U:L^p(\mu)\to L^q(\lambda)$ is bounded and $(\mu,\lambda') \in B_{p,q'}(\nu)$. Then
$$
\sup_Q \frac{1}{\nu(Q)^{1/p+1/q'}}\int_{\R^d} 1_Q\abs{U1_Q} \dx\leq [\mu,\lambda']_{B_{p,q'}(\nu)} \norm{U}_{L^p(\mu)\to L^q(\lambda)}.
$$
\end{lemma}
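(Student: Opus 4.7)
The proof should reduce to a direct duality-plus-H\"older argument, since the left-hand side is a testing quantity involving $U$ applied to the cube indicator and we have the full operator norm bound $\|U\|_{L^p(\mu)\to L^q(\lambda)}$ at our disposal. My plan is to fix an arbitrary cube $Q$ and estimate $\int_{\R^d} 1_Q\abs{U1_Q}\dx$ by rewriting it through the unweighted dual pairing that links $L^q(\lambda)$ and $L^{q'}(\lambda')$: writing $\int 1_Q \abs{U1_Q}\dx = \int (\mathrm{sgn}(U1_Q)\cdot 1_Q)\cdot U1_Q\dx$, H\"older's inequality (with exponents $q$ and $q'$) applied to the factorization $h\cdot k = (h\,\lambda^{-1/q'\cdot q'/q}) \cdot (k\,\lambda^{1/q}/\lambda^{1/q})$ gives
\[
\int_{\R^d} 1_Q\abs{U1_Q}\dx \leq \norm{1_Q}_{L^{q'}(\lambda')}\,\norm{U1_Q}_{L^q(\lambda)} = \lambda'(Q)^{1/q'}\,\norm{U1_Q}_{L^q(\lambda)}.
\]

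Next I would apply the assumed $L^p(\mu) \to L^q(\lambda)$-boundedness of $U$ to estimate $\norm{U1_Q}_{L^q(\lambda)}\leq \norm{U}_{L^p(\mu)\to L^q(\lambda)}\,\norm{1_Q}_{L^p(\mu)} = \norm{U}_{L^p(\mu)\to L^q(\lambda)}\,\mu(Q)^{1/p}$. Combining the two inequalities yields
\[
\int_{\R^d} 1_Q\abs{U1_Q}\dx \leq \mu(Q)^{1/p}\lambda'(Q)^{1/q'}\,\norm{U}_{L^p(\mu)\to L^q(\lambda)}.
\]

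Dividing by $\nu(Q)^{1/p+1/q'}$ and recognizing the resulting ratio as the Bloom--Muckenhoupt joint characteristic from Definition \ref{def:lower_characteristic},
\[
\frac{\mu(Q)^{1/p}\lambda'(Q)^{1/q'}}{\nu(Q)^{1/p+1/q'}} = \has{\frac{\mu(Q)}{\nu(Q)}}^{1/p}\has{\frac{\lambda'(Q)}{\nu(Q)}}^{1/q'} \leq [\mu,\lambda']_{B_{p,q'}(\nu)},
\]
and finally taking the supremum over all cubes $Q$ completes the argument. There is no real obstacle here: the statement is essentially the $L^\infty$-type testing condition extracted directly from the operator bound, and the assumption $p\leq q$ plays no essential role in this particular step (it is natural only because the Bloom characteristic and the norm $\BMO_\nu^\alpha$ are used in the companion lemma for paraproducts in this regime). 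The whole proof fits in a handful of lines.
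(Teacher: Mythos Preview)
Your proof is correct and follows essentially the same approach as the paper: apply H\"older's inequality with exponents $q,q'$ (using the relation $(\lambda')^{1/q'}\lambda^{1/q}=1$) to obtain $\int 1_Q|U1_Q|\dx \leq \lambda'(Q)^{1/q'}\|U1_Q\|_{L^q(\lambda)}$, then use the operator bound and the definition of $[\mu,\lambda']_{B_{p,q'}(\nu)}$. Your observation that the hypothesis $p\leq q$ is not actually used in this step is also accurate.
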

\begin{proof}By the relation $(\lambda')^{1/q'} \lambda^{1/q}=1$ and  H\"older's inequality,
\begin{equation*}
\begin{split}
\int_{\R^d} 1_Q\abs{U1_Q} \dx&=\int_{\R^d} \hab{(\lambda')^{1/q'}1_Q}\hab{\abs{U1_Q}\lambda^{1/p}} \dx\\
&\leq \norm{1_Q}_{L^{q'}(\lambda')} \norm{U1_Q}_{L^{q}(\lambda)}\\&\leq \norm{U}_{L^p(\mu)\to L^q(\lambda)} \norm{1_Q}_{L^{q'}(\lambda')}\norm{1_Q}_{L^{p}(\mu)}.
\end{split}
\end{equation*}
The claim now follows from the definition of $[\mu,\lambda']_{B_{p,q'}(\nu)}$.
\end{proof}

\section{The multiplier condition}\label{sec:nonneccessary}
Let $1<q<p<\infty$ and set $1/r:=1/q-1/p$. Let $\mu,\lambda$ be weights and set $\nu^{\frac{1}{p}+\frac{1}{q'}}:= \mu^{1/p}\lambda^{-1/q}$. In this final section we will compare the multiplier condition
$$
\inf_c \, \norm{f\mapsto (b-c)f}_{L^p(\mu)\to L^q(\lambda)}=\inf_c\nrm{(b-c)\nu^{-1}}_{L^r(\nu)}<\infty,$$
and sharp maximal condition $M^{\#}_\nu b \in L^r(\nu)$ and  prove that, unlike in the unweighted setting, the multiplier condition is not necessary for the boundedness of the commutator or for the boundedness of the paraproduct when $p>q$.

\subsection{Conditions under which the multiplier and sharp maximal conditions are equivalent}

In this subsection we show that under the assumption that $\nu \in A_{r'}$, the multiplier norm and the $L^r(\nu)$-norm of the sharp maximal function are comparable.
In the next subsection we will see that this is not possible in general. In particular, the comparability result of this subsection shows that in the proof of Theorem \ref{theorem:commutatornormvsmultipliernorm}\ref{it:multinec2} in the next subsection it will be crucial to use a weight $\nu \notin A_{r'}$.

\begin{proposition}\label{prop:feffermansteintyperesult}
Let $b \in L^1_{\loc}$,  $r \in (1,\infty)$ and let $\nu$ be a weight.
\begin{enumerate}[(i)]
\item \label{it:FS1} If $\nu$ is doubling, we have
\[
\norm{M^{\sharp}_{\nu}b}_{L^r(\nu)} \lesssim_\nu \inf_{c}\, \norm{(b-c)\nu^{-1}}_{L^r(\nu)}.
\]
\item \label{it:FS2} If $\nu\in A_{r'}$, we have 
\[
\inf_{c}\norm{(b-c)\nu^{-1}}_{L^r(\nu)}\lesssim [\nu]_{A_{r'}}^{r-1} [\nu]_{A_{\infty}}\norm{M^{\sharp}_{\nu}b}_{L^r(\nu)}.
\]
\end{enumerate}
\end{proposition}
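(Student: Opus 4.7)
The plan for \ref{it:FS1} is a direct pointwise reduction to the weighted Hardy--Littlewood maximal function. First I would observe that for any constant $c \in \bc$ and any cube $Q$, the triangle inequality gives $|b-\ip{b}_Q| \leq |b-c| + \ip{|b-c|}_Q$, so that
\[
\frac{1}{\nu(Q)}\int_Q |b-\ip{b}_Q|\dx \leq \frac{2}{\nu(Q)}\int_Q |b-c|\dx = 2 \ip{(b-c)\nu^{-1}}^\nu_Q.
\]
Taking a supremum over cubes containing a fixed point $x$ yields the pointwise bound $M^\#_\nu b \leq 2 M^\nu\bigl((b-c)\nu^{-1}\bigr)$, where $M^\nu$ denotes the (non-dyadic) Hardy--Littlewood maximal operator with respect to $\nu\dx$. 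Since $\nu$ is doubling, $M^\nu$ is bounded on $L^r(\nu)$, and taking an infimum over $c$ then completes the argument.

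For \ref{it:FS2} the strategy is to reduce to a weighted Fefferman--Stein inequality after first controlling the unweighted sharp maximal function $M^\# b$ by the weighted one. The key pointwise bound would be
\[
M^\# b(x) \leq M\bigl(M^\#_\nu b \cdot \nu\bigr)(x), \qquad x \in \R^d,
\]
which follows because for each $y \in Q$ one has $\nu(Q)^{-1}\int_Q |b-\ip{b}_Q|\dz \leq M^\#_\nu b(y)$; multiplying by $\nu(y)$, integrating over $Q$, and dividing by $|Q|$ yields the claim. Setting $w := \nu^{1-r}$, the hypothesis $\nu \in A_{r'}$ translates by the duality in Lemma~\ref{lemma:muckenhoupt_basics} into $w \in A_r$ with $[w]_{A_r} = [\nu]_{A_{r'}}^{r-1}$. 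Applying Lemma~\ref{lemma:strongtypemaximalnondyadic} to $M$ on $L^r(w)$, together with the identity $\bigl(M^\#_\nu b \cdot \nu\bigr)^r w = (M^\#_\nu b)^r \nu$, gives
\[
\norm{M^\# b}_{L^r(w)} \lesssim [w]_{A_r}^{1/(r-1)} \norm{M^\#_\nu b}_{L^r(\nu)} = [\nu]_{A_{r'}} \norm{M^\#_\nu b}_{L^r(\nu)}.
\]

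The final step is to invoke a classical weighted Fefferman--Stein inequality: for any $w \in A_\infty$ there is a constant $c \in \bc$ such that $\norm{b-c}_{L^r(w)} \lesssim [w]_{A_\infty}\norm{M^\# b}_{L^r(w)}$. Combining this with the previous display and the identity $\norm{b-c}_{L^r(w)} = \norm{(b-c)\nu^{-1}}_{L^r(\nu)}$ yields the desired bound qualitatively. The main obstacle is extracting the precise constant $[\nu]_{A_{r'}}^{r-1}[\nu]_{A_\infty}$ stated in \ref{it:FS2}: a naive tracking of constants through the outlined argument would express the $A_\infty$ factor in terms of $[w]_{A_\infty}$ rather than $[\nu]_{A_\infty}$, and these are not directly comparable for the negative power $w = \nu^{1-r}$. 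A natural workaround is to carry out the Fefferman--Stein step directly with respect to $\nu\dx$, so that the $A_\infty$ characteristic of $\nu$ arises naturally, for instance via a sparse or good-$\lambda$ argument adapted to the weighted setting.
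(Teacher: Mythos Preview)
Your proof of \ref{it:FS1} is essentially identical to the paper's.

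For \ref{it:FS2} the paper takes a genuinely different route. Rather than passing through the unweighted sharp maximal function $M^\# b$ and the classical weighted Fefferman--Stein inequality, the paper first reduces, via a lemma (Lemma~\ref{lemma:weightedanalogue}), to proving the \emph{localized} estimate $\|(b-\ip{b}_Q)\ind_Q\|_{L^r(\nu^{1-r})}\lesssim [\nu]_{A_{r'}}^{r-1}[\nu]_{A_\infty}\|M^\#_\nu b\|_{L^r(\nu)}$ uniformly in $Q$. This is done by sparse domination of $\ind_Q|b-\ip{b}_Q|$ (as in \cite[Lemma~3.1.2]{hytonen2021}), followed by a duality argument with $g\in L^{r'}(\nu)$: after H\"older's inequality, one factor is $\bigl(\sum_S(\nu(S)^{-1}\int_S|b-\ip{b}_S|)^r\nu(S)\bigr)^{1/r}$, controlled by $[\nu]_{A_\infty}^{1/r}\|M^\#_\nu b\|_{L^r(\nu)}$ via $\nu$-sparseness (Corollary~\ref{cor:Ainftysparse}), and the other is $\bigl(\sum_S\ip{g}_S^{r'}\nu(S)\bigr)^{1/r'}$, controlled by $[\nu]_{A_\infty}^{1/r'}[\nu]_{A_{r'}}^{r-1}\|g\|_{L^{r'}(\nu)}$ via Corollary~\ref{cor:Ainftysparse} and Lemma~\ref{lemma:strongtypemaximalnondyadic}. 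The product gives exactly $[\nu]_{A_{r'}}^{r-1}[\nu]_{A_\infty}$.

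Your route via $M^\# b\leq M(M^\#_\nu b\cdot\nu)$ and Buckley is elegant and qualitatively correct, but as you yourself note, it produces $[\nu^{1-r}]_{A_\infty}$ rather than $[\nu]_{A_\infty}$; the best you can say is $[\nu^{1-r}]_{A_\infty}\lesssim[\nu^{1-r}]_{A_r}=[\nu]_{A_{r'}}^{r-1}$, yielding $[\nu]_{A_{r'}}^{r}$ overall. Your suggested workaround of ``carrying out the Fefferman--Stein step directly with respect to $\nu\dx$'' is vague: taken literally it would require $\|(b-c)\nu^{-1}\|_{L^r(\nu)}\lesssim\|M^\#_\nu b\|_{L^r(\nu)}$, which is the statement itself. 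The paper's sparse-and-duality argument is precisely the implementation of that idea, and it shows that the correct constants emerge because both the sparse collection and the test function $g$ are measured against $\nu$, not $\nu^{1-r}$.
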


To prove Proposition \ref{prop:feffermansteintyperesult}, we will need the following weighted version of \cite[Lemma 3.6]{hytonen2021}. The proof is almost identical.
\begin{lemma}\label{lemma:weightedanalogue}
Let $b\in L^1_{\loc}$, $1<r<\infty$ and $\nu\in A_{r'}$. If we have for all cubes $Q$ that
\[
\|(b-\ip{b}_Q)\nu^{-1}\|_{L^r(\nu\ind_Q)}\leq C,
\]
then there is a constant $c$ such that
\[
\|(b-c)\nu^{-1}\|_{L^r(\nu)}\leq C.
\]
\end{lemma}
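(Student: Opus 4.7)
The plan is to mimic the unweighted argument of Hyt\"onen in \cite[Lemma 3.6]{hytonen2021}: identify the required constant $c$ as the limit of $\ip{b}_{Q_n}$ along a sequence of nested cubes $Q_n$ exhausting $\R^d$, and then invoke Fatou's lemma to promote the uniform local bound into a global one. The $A_{r'}$-hypothesis on $\nu$ is used only to guarantee the existence of this limit.

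First I would fix nested cubes $Q_1 \subseteq Q_2 \subseteq \cdots$ with $\bigcup_n Q_n = \R^d$ and show that $(\ip{b}_{Q_n})_n$ is Cauchy in $\R$. For $n \leq m$, the identity $\ip{b}_{Q_n} - \ip{b}_{Q_m} = |Q_n|^{-1}\int_{Q_n}(b - \ip{b}_{Q_m})\dx$ combined with H\"older's inequality applied to the split $|b - \ip{b}_{Q_m}| = (|b - \ip{b}_{Q_m}|\nu^{(1-r)/r})\cdot \nu^{(r-1)/r}$, which uses $(r-1)r'/r = 1$, yields
\begin{align*}
\abs{\ip{b}_{Q_n} - \ip{b}_{Q_m}} &\leq \frac{1}{|Q_n|}\has{\int_{Q_n} \abs{b - \ip{b}_{Q_m}}^r \nu^{1-r}\dx}^{1/r} \nu(Q_n)^{1/r'} \leq \frac{C\,\nu(Q_n)^{1/r'}}{|Q_n|},
\end{align*}
where the last step uses $Q_n \subseteq Q_m$ and the hypothesis on the cube $Q_m$. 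Writing $\sigma := \nu^{1-r}$ and using the $A_{r'}$-condition in the form $\ip{\nu}_Q^{1/r'}\ip{\sigma}_Q^{1/r}\leq [\nu]_{A_{r'}}^{1/r'}$ then gives
\begin{equation*}
\frac{\nu(Q_n)^{1/r'}}{|Q_n|} = \ip{\nu}_{Q_n}^{1/r'} |Q_n|^{-1/r} \leq [\nu]_{A_{r'}}^{1/r'}\, \sigma(Q_n)^{-1/r}.
\end{equation*}

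Next I would observe that $\sigma = \nu^{1-r}$ lies in $A_r$ (by Muckenhoupt duality with $\nu \in A_{r'}$) and hence in $A_\infty$. It is a classical consequence of the reverse doubling property of $A_\infty$-weights that $\sigma(\R^d) = \infty$, so $\sigma(Q_n) \to \infty$. Combined with the above estimate, this shows that $(\ip{b}_{Q_n})_n$ is Cauchy; define $c := \lim_n \ip{b}_{Q_n}$, which in fact satisfies $|\ip{b}_{Q_n} - c| \leq C[\nu]_{A_{r'}}^{1/r'}\sigma(Q_n)^{-1/r}$ by sending $m \to \infty$ in the display above.

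Finally, since $b - \ip{b}_{Q_n} \to b - c$ pointwise a.e.\ and $\ind_{Q_n} \uparrow 1$, Fatou's lemma delivers
\[
\int_{\R^d} \abs{b-c}^r \nu^{1-r}\dx \leq \liminf_{n \to \infty} \int_{Q_n} \abs{b-\ip{b}_{Q_n}}^r \nu^{1-r}\dx \leq C^r,
\]
which is exactly $\nrm{(b-c)\nu^{-1}}_{L^r(\nu)} \leq C$. The only step requiring nontrivial input is the divergence $\sigma(Q_n) \to \infty$, which rests on the $A_\infty$-property of the dual weight; without this the argument would have no mechanism to extract a finite limit from the averages $\ip{b}_{Q_n}$.
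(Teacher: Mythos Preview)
Your proof is correct and follows essentially the same route as the paper's: extract the constant $c$ as the limit of $\ip{b}_{Q_n}$ along nested cubes exhausting $\R^d$, show the sequence is Cauchy via the hypothesis and the fact that $\sigma=\nu^{1-r}\in A_r$ gives $\sigma(\R^d)=\infty$, and finish with Fatou's lemma. The only cosmetic differences are that the paper splits $\ip{b}_{Q_n}-\ip{b}_{Q_m}$ via Minkowski into two terms (picking up a harmless factor of $2$) and arrives at the bound $2C\,\sigma(Q_n)^{-1/r}$ directly without passing through the $A_{r'}$-characteristic, whereas you use H\"older with $\nu$ and then invoke the $A_{r'}$-condition explicitly to convert $\nu(Q_n)^{1/r'}/|Q_n|$ into $[\nu]_{A_{r'}}^{1/r'}\sigma(Q_n)^{-1/r}$.
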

\begin{proof}
Let us consider a sequence of cubes $Q_1\subseteq Q_2\subseteq \cdots$ with $\bigcup_{j=1}^\infty Q_j=\R^d$. For $j\leq k$, we have
\begin{equation*}
\abs{\ip{b}_{Q_k}-\ip{b}_{Q_j}}=\abs{(b(x)-\ip{b}_{Q_j})-(b(x)-\ip{b}_{Q_k})}
\end{equation*}
and hence, multiplying by $\nu^{-1/r'}$, taking the $L^r$-average over $Q_j$ and using Minkowski's inequality, we have
\begin{align*}
\has{\frac{1}{|Q_j|}&\int_{Q_j} \abs{\ip{b}_{Q_k}-\ip{b}_{Q_j}}^r\nu^{1-r}(x)\dx}^{1/r} \\
&\leq |Q_j|^{-1/r}\norm{(b-\ip{b}_{Q_j})\nu^{-1}}_{L^r(\nu\ind_{Q_j})}+|Q_j|^{-1/r}\norm{(b-\ip{b}_{Q_k})\nu^{-1}}_{L^r(\nu \ind_{Q_j})} \\
&\leq C|Q_j|^{-1/r}+|Q_j|^{-1/r}\norm{(b-\ip{b}_{Q_k})\nu^{-1}}_{L^r(\nu \ind_{Q_k})} \\
&\leq 2C|Q_j|^{-1/r}.
\end{align*}
Rearranging the terms in this estimate, we obtain
\begin{equation}\label{eq:weightedanalogue1}
\abs{\ip{b}_{Q_k}-\ip{b}_{Q_j}}\leq 2C\cdot \nu^{1-r}(Q_j)^{-1/r}.
\end{equation}
Note that because $\nu\in A_{r'}$, we know that $\nu^{1-r}\in A_r$. Furthermore, any $A_\infty$ weight gives infinite measure to $\R^d$ and thus $\nu^{1-r}(Q_j)^{1-r}\to 0$ as $j\to \infty$. Thus inequality \eqref{eq:weightedanalogue1} implies that $(\ip{b}_{Q_j})_{j=1}^\infty$ is a Cauchy sequence and thus converges to some $c$. We conclude by Fatou's lemma that 
\begin{align*}
\int_{\R^d} |b-c|^r\nu^{1-r} \dd x&=\int_{\R^d} \lim_{j\to\infty}1_{Q_j}|b-\ip{b}_{Q_j}|^r\nu^{1-r} \dd x\\&\leq \liminf_{j\to\infty}\int_{Q_j}|b-\ip{b}_{Q_j}|^r\nu^{1-r}\dd x\leq C^r.
\end{align*}
This finishes the proof.
\end{proof}

We are now ready to prove Proposition \ref{prop:feffermansteintyperesult}.
\begin{proof}[Proof of Proposition \ref{prop:feffermansteintyperesult}]
\ref{it:FS1}: Let $c$ be a constant. Note that for $x \in \R^d$ we have 
\begin{align*}
M^\#_\nu b(x) \leq 2 \sup_{Q \ni x} \frac{1}{\nu(Q)}\int_Q (b-c)\dd x =2  \,M^{\nu} \hab{(b-c)\nu^{-1}}(x).
\end{align*}
So the claim follows from the Hardy--Littlewood maximal inequality. The non-dyadic versions of the maximal functions are used in this context. Therefore, due to the one-third trick, the constant in the Hardy--Littlewood maximal inequality depends on the doubling constant of the measure $\nu$.

For \ref{it:FS2} it 
suffices to estimate
\[
\norm{(b-\ip{b}_Q)\nu^{-1}}_{L^r(\nu \ind_Q)}=\norm{(b-\ip{b}_Q)1_Q}_{L^r(\nu^{1-r})}
\]
uniformly over all cubes $Q$ by Lemma \ref{lemma:weightedanalogue}. Fix a cube $Q$. By, e.g., \cite[Lemma 3.1.2]{hytonen2021}, there exists a $\frac12$-sparse family $\mc{S}$ such that
$$
\ind_Q \abs{b-\ip{b}_Q} \lesssim \sum_{S \in \mc{S}}\frac{1}{\abs{S}}\int_R \abs{b-\ip{b}_S}\dd x.
$$
Let $g\in L^{r'}(\nu)$ be positive. By H\"older's inequality, we estimate
\begin{align*}
\int_Q |b(x)-&\ip{b}_{Q}|g(x) \dx \\
&\lesssim \sum_{S\in \mc{S}} \frac{1}{\nu(S)} \has{\int_S\abs{b-\ip{b}_S}\dx} \ip{g}_S \nu(S) \\
&\leq \has{\sum_{S\in \mc{S}}  \has{ \frac{1}{\nu(S)}\int_S \abs{b-\ip{b}_S}\dx }^{r} \nu(S) }^{1/r} \has{\sum_{S\in \mc{S}}  \ip{g}_S^{r'} \nu(S)}^{1/r'}.
\end{align*}
For the first term, we estimate using Corollary \ref{cor:Ainftysparse} 
\begin{align*}
\has{\sum_{S\in \mc{S}} \has{\frac{1}{\nu(S)}\int_S \abs{b-\ip{b}_S}\dx }^{r} \nu(S) }^{1/r}&\lesssim [\nu]_{A_{\infty}}^{1/r}\has{\sum_{S\in \mc{S}} \has{ \frac{1}{\nu(S)}\int_S \abs{b-\ip{b}_S}\dx }^{r} \nu(E_S) }^{1/r} \\
&=[\nu]_{A_{\infty}}^{1/r}\has{\sum_{S\in \mc{S}} \int_{E_S}\has{ \frac{1}{\nu(S)}\int_{S} \abs{b-\ip{b}_S}\dx }^{r} \dnu }^{1/r} \\
&\leq [\nu]_{A_{\infty}}^{1/r}\norm{M^{\sharp}_{\nu}b}_{L^r(\nu)}.
\end{align*}
For the second term, we have using Corollary \ref{cor:Ainftysparse} and Lemma \ref{lemma:strongtypemaximalnondyadic} that
\begin{align*}
\has{\sum_{S \in \mc{S}} \ip{g}_S^{r'} \nu(S)}^{1/r'}\lesssim [\nu]_{A_{\infty}}^{1/r'}\has{\sum_{S \in \mc{S}} \ip{g}_S^{r'} \nu(E_S)}^{1/r'}
&=[\nu]_{A_{\infty}}^{1/r'}\has{\sum_S \int_{E_S}\ip{g}_S^{r'}\dnu}^{1/r'} \\
&\leq [\nu]_{A_{\infty}}^{1/r'} \norm{Mg}_{L^{r'}(\nu)} \\
&\lesssim [\nu]_{A_{\infty}}^{1/r'} [\nu]_{A_{r'}}^{\frac{1}{r'-1}}\|g\|_{L^{r'}(\nu)}.
\end{align*}
Combining these estimates and using duality, we get
\begin{align*}
\|(b-\ip{b}_Q)1_Q\|_{L^r(\nu^{1-r})}\lesssim [\nu]_{A_{r'}}^{r-1} [\nu]_{A_{\infty}}\norm{M^{\sharp}_{\nu}b}_{L^r(\nu)},
\end{align*}finishing the proof.
\end{proof}

\subsection{Multiplier condition is in general non-necessary for boundedness} 
In the previous subsection, we have seen that the pointwise multiplication condition and the sharp maximal condition are equivalent for $\nu \in A_{r'}$, which combined with Theorem \ref{thm:maincommutator} yields the following. We have
\begin{equation}\label{eq:counterpositive}
    \norm{[b,T]}_{L^p(\mu)\to L^q(\lambda)}\eqsim_{\mu,\lambda} \inf_c \, \norm{(b-c)\nu^{-1}}_{L^r(\nu)},\quad\text{assuming $\nu \in A_{r'}$,} 
\end{equation}
for $1<q<p<\infty$, $\frac1r:=\frac1q-\frac1p$, $\mu \in A_p$, $\lambda \in A_q$ and $\nu^{\frac{1}{p}+\frac{1}{q'}}:={\mu^{1/p}}{\lambda^{-1/q}}$, which is a direct analog of the unweighted setting in \cite{hytonen2021}.
In this subsection we will prove that the multiplier condition
\begin{equation*}
\inf_c\nrm{(b-c)\nu^{-1}}_{L^r(\nu)}<\infty
\end{equation*}
is in general not necessary for the $L^p(\mu)\to L^q(\lambda)$-boundedness of commutators or paraproducts.  From \eqref{eq:counterpositive}, we know that a counterexample must satisfy $\nu \in A_{2r'}\setminus A_{r'}$.

\begin{theorem}\label{theorem:commutatornormvsmultipliernorm}
Let $T$ be an $\omega$-Calder\'on-Zygmund operator with $\omega$ satisfying the Dini condition.
\begin{enumerate}[(i)]
\item\label{it:multinec1} \emph{(Sufficiency)} Suppose $1<p,q<\infty$, $\mu\in A_p$ and $\lambda\in A_q$. For every $b\in L^1_{\loc}$,
\begin{align*}
\norm{[b,T]}_{L^p(\mu)\to L^q(\lambda)}&\leq \big(\norm{T}_{L^p(\mu)\to L^p(\mu)}+\norm{T}_{L^q(\lambda)\to L^q(\lambda)}\big) \\&\hspace{1cm} \cdot \inf_c \, \norm{f\mapsto (b-c)f}_{L^p(\mu)\to L^q(\lambda)}.
\end{align*}

\item \label{it:multinec2}\emph{(Non-necessity)} Suppose $1<q<p<\infty$. Then there are $\mu\in A_p$, $\lambda\in A_q$ and $b\in L^1_{\loc}$ such that
\begin{align*}
\inf_c \, \norm{f\mapsto (b-c)f}_{L^p(\mu)\to L^q(\lambda)}&=\infty, \\\norm{[b,T]}_{L^p(\mu)\to L^q(\lambda)}&<\infty.
\end{align*}
\end{enumerate}
\end{theorem}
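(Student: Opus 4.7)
\textbf{Plan for \ref{it:multinec1}.} This is immediate from the algebraic identity
\begin{equation*}
    [b,T]f = (b-c)\, Tf - T\bigl((b-c)f\bigr), \qquad c \in \bc,
\end{equation*}
combined with the triangle inequality and the boundedness of $T$ on $L^p(\mu)$ and $L^q(\lambda)$ (which holds since $\mu \in A_p$, $\lambda \in A_q$ and $\omega$ is Dini continuous). Bounding the first summand by applying the multiplier to $Tf$ and the second by applying $T$ to $(b-c)f$ and then the multiplier to $f$, and finally taking the infimum over $c$, gives the stated inequality.

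\textbf{Plan for \ref{it:multinec2}.} H\"older's inequality and the Bloom relation $\nu^{1/p+1/q'} = \mu^{1/p}\lambda^{-1/q}$ (together with $1/p+1/q' = 1/r'$) reduce the multiplier norm to a weighted Lebesgue norm,
\begin{equation*}
   \norm{f \mapsto (b-c)f}_{L^p(\mu) \to L^q(\lambda)} = \norm{(b-c)\nu^{-1}}_{L^r(\nu)}.
\end{equation*}
By Theorem \ref{thm:maincommutator} the task therefore reduces to exhibiting weights $\mu \in A_p$, $\lambda \in A_q$ and a function $b \in L^1_{\loc}$ with
\begin{equation*}
    M^\#_\nu b \in L^r(\nu) \qquad \text{and} \qquad \inf_{c \in \bc}\, \norm{(b-c)\nu^{-1}}_{L^r(\nu)} = \infty.
\end{equation*}
Proposition \ref{prop:feffermansteintyperesult}\ref{it:FS2} forbids this whenever $\nu \in A_{r'}$, so the example must live in the gap $A_{2r'} \setminus A_{r'}$ left open by Lemma \ref{lemma:muckenhoupt_class_bloom}.

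A natural candidate is the borderline power weight $\nu(x) = |x|^{(r'-1)d}$, which lies in $A_{2r'}\setminus A_{r'}$ and, by Lemma \ref{lemma:muckenhoupt_class_bloom}\ref{it:Bloomw3}, is the Bloom weight of suitable power weights $\mu \in A_p$ and $\lambda \in A_q$. The elementary identity $(r-1)(r'-1) = 1$ gives $\nu^{1-r}(x) = |x|^{-d}$, which is non-integrable both near the origin and near infinity. Taking $b := \ind_{B(0,1)}$, for every constant $c$ the integral $\int|b-c|^r|x|^{-d}\dx$ diverges --- at the origin if $c \neq 1$ and at infinity if $c \neq 0$ --- so the multiplier norm is infinite for every $c$. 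For the sharp maximal bound, a case analysis of cubes $Q \ni x$ according to whether $Q \subseteq B(0,1)$, $Q \cap B(0,1) = \emptyset$ or $Q$ straddles the boundary, combined with the scale comparisons $\nu(Q) \sim \ell(Q)^{r'd}$ when $Q$ sits close to the origin and $\nu(Q) \sim \ell(Q)^d |y|^{(r'-1)d}$ when $Q$ sits far from it, yields the bounds $M^\#_\nu b(x) \lesssim 1$ for $|x| \lesssim 1$ and $M^\#_\nu b(x) \lesssim |x|^{-r'd}$ for $|x| \gg 1$; integrating against $\nu$ then gives $M^\#_\nu b \in L^r(\nu)$ since $r > 1$. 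The main technical obstacle is this sharp maximal verification, where the decay at infinity and the growth of $\nu$ must be balanced against the borderline non-integrability of $\nu^{1-r}$; the very fact that this balance holds for $M^\#_\nu b$ but fails for pointwise multiplication captures the essential gap between the two conditions.
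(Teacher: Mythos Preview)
Your proposal is correct and follows essentially the same route as the paper: the same algebraic splitting for \ref{it:multinec1}, and for \ref{it:multinec2} the same borderline power weight $\nu(x)=|x|^{(r'-1)d}$ together with $b=\ind_{B(0,1)}$, invoking Lemma~\ref{lemma:muckenhoupt_class_bloom}\ref{it:Bloomw3} to realize $\nu$ as a Bloom weight and the upper bound theorem to convert $M^\#_\nu b\in L^r(\nu)$ into commutator boundedness. The paper packages the sharp-maximal and multiplier computations into a separate lemma (with a helper lemma recording $\nu(Q)\gtrsim\ell(Q)^{\gamma+d}$), and uses the case split $c=0$ versus $c\neq 0$ rather than your $c\neq 1$ versus $c\neq 0$, but the substance is identical.
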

\begin{proof}
 The proof of \ref{it:multinec1}  is straightforward and well-known. Let $c$ be a constant and denote $\Theta(c):=\norm{f\mapsto (b-c)f}_{L^p(\mu)\to L^q(\lambda)}$.  Then
\begin{align*}
    \norm{[b,T]f}_{L^q(\lambda)} &=\norm{[b-c,T]f}_{L^q(\lambda)} \\
    &\leq \norm{(b-c)Tf}_{L^q(\lambda)}+\norm{T((b-c)f)}_{L^q(\lambda)} \\
    &\leq \Theta(c)\norm{Tf}_{L^p(\mu)}+\norm{T}_{L^q(\lambda)\to L^q(\lambda)} \norm{(b-c)f}_{L^q(\lambda)} \\
    &\leq \Theta(c)\big(\norm{Tf}_{L^p(\mu)}+\norm{T}_{L^q(\lambda)\to L^q(\lambda)}\norm{f}_{L^p(\mu)}\big) \\
    &\leq \Theta(c)\big(\norm{T}_{L^p(\mu)\to L^p(\mu)}+\norm{T}_{L^q(\lambda)\to L^q(\lambda)}\big)\norm{f}_{L^p(\mu)}.
\end{align*}

For \ref{it:multinec2} the idea is to use Lemma \ref{lemma:keylemmaforcommuvsmult} below. Let $b\in L^\infty_c$ be given by $b := \ind_{B(0,1)}$, where $B(0,1)$ is the unit ball. Set $\frac1r:=\frac1q-\frac1p$,  let $\gamma:=d(r'-1)$ and define $\nu(x):=|x|^\gamma$. Then $\nu\in A_{2r'}$ since $-d<\gamma< d(2r'-1)$. By Lemma \ref{lemma:muckenhoupt_class_bloom}, there exist power weights $\mu\in A_p$ and $\lambda\in A_q$ such that $\nu^{1/r'}=\mu^{1/p}\lambda^{-1/q}$. By Theorem \ref{theorem:uppercommutator} and Lemma \ref{lemma:keylemmaforcommuvsmult}\ref{it:keynec1} we deduce that 
\begin{equation*}
\|[b,T]\|_{L^p(\mu)\to L^q(\lambda)}\lesssim_{\mu,\lambda}\norm{M_\nu^{\#}b}_{L^r(\nu)}<\infty.
\end{equation*}
Also, because for all constants $c$ we have by H\"{o}lder's inequality that
\begin{equation}\label{eq:holderappliedtomultiplier}
\norm{f\mapsto (b-c)f}_{L^p(\mu)\to L^q(\lambda)}= \norm{(b-c)\nu^{-1}}_{L^r(\nu)},
\end{equation}
Lemma \ref{lemma:keylemmaforcommuvsmult}\ref{it:keynec2} shows that 
\begin{equation*}
\inf_c \norm{f\mapsto (b-c)f}_{L^p(\mu)\to L^q(\lambda)}=\infty.\qedhere
\end{equation*}
\end{proof}

We get a similar result to Theorem \ref{theorem:commutatornormvsmultipliernorm} for paraproducts instead of commutators. The sufficiency is an immediate consequence of Theorem \ref{theorem:bloompara}, Proposition \ref{prop:feffermansteintyperesult}\ref{it:FS1} and equality \eqref{eq:holderappliedtomultiplier}. The non-necessity part follows by using Theorem \ref{theorem:bloompara} instead of Theorem \ref{theorem:uppercommutator}, in an otherwise identical proof to the non-necessity part of Theorem \ref{theorem:commutatornormvsmultipliernorm}. We write this result as follows:

\begin{theorem}\label{theorem:paraproductnormvsmultipliernorm}
Let $1<q<p<\infty$.
\begin{enumerate}[(i)]
\item\emph{(Sufficiency)} Suppose $\mu\in A_p$ and $\lambda\in A_q$. For every $b\in L^1_{\loc}$,
\begin{align*}
\norm{\Pi_b}_{L^p(\mu) \to L^q(\lambda)} &\lesssim [\mu]_{A_p}^{\frac1{p-1}} [\lambda]_{A_q} [\nu]_{A_\infty}^{1/q-1/p} \cdot \inf_c \, \norm{f\mapsto (b-c)f}_{L^p(\mu)\to L^q(\lambda)}.
\end{align*}
\item \emph{(Non-necessity)}  There are $\mu\in A_p$, $\lambda\in A_q$ and $b\in L^1_{\loc}$ such that
\begin{align*}
\inf_c \, \norm{f\mapsto (b-c)f}_{L^p(\mu)\to L^q(\lambda)}&=\infty, \\ \norm{\Pi_b}_{L^p(\mu) \to L^q(\lambda)}&<\infty.
\end{align*}
\end{enumerate}
\end{theorem}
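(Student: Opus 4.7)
The plan is to combine the upper-bound machinery for paraproducts already established in Section \ref{sec:upper} with the Fefferman--Stein-type comparison between the weighted sharp maximal function and the multiplier norm, and then to transplant the counterexample from the commutator case.

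For the sufficiency part (i), I would proceed in three short steps. First, invoke the upper bound from Theorem \ref{theorem:bloompara} in the upper-triangular range $p>q$, which yields
$$\nrm{\Pi_b}_{L^p(\mu)\to L^q(\lambda)}\lesssim [\mu]_{A_p}^{\frac{1}{p-1}}[\lambda]_{A_q}[\nu]_{A_\infty}^{1/r}\,\nrm{M_\nu^{\sharp} b}_{L^r(\nu)},$$
where $\frac{1}{r}=\frac{1}{q}-\frac{1}{p}$, matching the asserted exponent on $[\nu]_{A_\infty}$. Second, since $\mu\in A_p$ and $\lambda\in A_q$ force $\nu\in A_{2r'}$ by Lemma \ref{lemma:muckenhoupt_class_bloom}, in particular $\nu$ is doubling, so Proposition \ref{prop:feffermansteintyperesult}\ref{it:FS1} applies to give $\nrm{M_\nu^{\sharp} b}_{L^r(\nu)}\lesssim_\nu \inf_c \nrm{(b-c)\nu^{-1}}_{L^r(\nu)}$. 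Third, the identity \eqref{eq:holderappliedtomultiplier}, which is a direct consequence of H\"older's inequality together with the defining relation $\nu^{1/p+1/q'}=\mu^{1/p}\lambda^{-1/q}$, converts the right-hand side into $\inf_c\nrm{f\mapsto (b-c)f}_{L^p(\mu)\to L^q(\lambda)}$. Chaining these three estimates closes (i).

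For the non-necessity part (ii), I would mirror the strategy employed in the proof of Theorem \ref{theorem:commutatornormvsmultipliernorm}\ref{it:multinec2}, swapping the role of the commutator upper bound with that of the paraproduct. Take $b:=\ind_{B(0,1)}$ and set $\gamma:=d(r'-1)$, $\nu(x):=|x|^\gamma$; since $-d<\gamma<d(2r'-1)$, we have $\nu\in A_{2r'}$, and by design $\gamma$ sits on the critical boundary of $A_{r'}$, so $\nu\notin A_{r'}$. Lemma \ref{lemma:muckenhoupt_class_bloom}\ref{it:Bloomw3} then supplies power weights $\mu\in A_p$ and $\lambda\in A_q$ with $\nu^{1/r'}=\mu^{1/p}\lambda^{-1/q}$. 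Applying Theorem \ref{theorem:bloompara} in place of Theorem \ref{theorem:uppercommutator}, together with Lemma \ref{lemma:keylemmaforcommuvsmult}\ref{it:keynec1}, shows $\nrm{\Pi_b}_{L^p(\mu)\to L^q(\lambda)}\lesssim_{\mu,\lambda}\nrm{M_\nu^{\sharp} b}_{L^r(\nu)}<\infty$. On the other hand, \eqref{eq:holderappliedtomultiplier} combined with Lemma \ref{lemma:keylemmaforcommuvsmult}\ref{it:keynec2} gives $\inf_c\nrm{f\mapsto (b-c)f}_{L^p(\mu)\to L^q(\lambda)}=\inf_c\nrm{(b-c)\nu^{-1}}_{L^r(\nu)}=\infty$.

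The only real obstacle, which is hidden inside Lemma \ref{lemma:keylemmaforcommuvsmult}, is the simultaneous verification that $M_\nu^{\sharp}\ind_{B(0,1)}\in L^r(\nu)$ while $(\ind_{B(0,1)}-c)\nu^{-1}\notin L^r(\nu)$ for every constant $c$. This splitting is only possible because $\nu\in A_{2r'}\setminus A_{r'}$: if instead $\nu\in A_{r'}$, Proposition \ref{prop:feffermansteintyperesult}\ref{it:FS2} would force the two quantities to be equivalent and no counterexample could exist. Hence the specific choice of the critical exponent $\gamma=d(r'-1)$ is forced, and once the auxiliary lemma is available the rest of the argument is a mechanical combination of previously established estimates.
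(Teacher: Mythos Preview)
Your proposal is correct and follows essentially the same approach as the paper: for (i) you chain Theorem \ref{theorem:bloompara} (via Remark \ref{rem:sharpFH}), Proposition \ref{prop:feffermansteintyperesult}\ref{it:FS1}, and identity \eqref{eq:holderappliedtomultiplier}; for (ii) you replicate the counterexample from Theorem \ref{theorem:commutatornormvsmultipliernorm}\ref{it:multinec2} with Theorem \ref{theorem:bloompara} replacing Theorem \ref{theorem:uppercommutator}. Your additional remark explaining why the critical exponent $\gamma=d(r'-1)$ is forced (so that $\nu\in A_{2r'}\setminus A_{r'}$, lest Proposition \ref{prop:feffermansteintyperesult}\ref{it:FS2} preclude a counterexample) is a useful piece of motivation not spelled out in the paper.
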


We will prove the non-necessity using power weights. We start by documenting a useful property of power weights with a non-negative power:
\begin{lemma}\label{lemma:lowerboundforweightofcube}
Suppose $\gamma\geq 0$ and $\nu(x):=|x|^\gamma$. Then  we have for all cubes $Q$
\[
\ell(Q)^{\gamma+d}\lesssim \nu(Q)
\]
In particular, for all $\alpha>0$ we have
\[
\sup_{Q : \ell(Q)\geq \alpha}\frac{|Q|}{\nu(Q)}\lesssim \alpha^{-\gamma}.
\]
\end{lemma}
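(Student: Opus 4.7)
\medskip

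\noindent\textbf{Proof plan.} For the first inequality, the plan is to show that a definite fraction of any cube $Q$ of sidelength $\ell:=\ell(Q)$ consists of points far from the origin, and to exploit $\gamma\geq 0$ on that subset. More precisely, the ball $B(0,c\ell)$ has Lebesgue measure $\omega_d (c\ell)^d$, which, upon choosing $c=c_d\in(0,1)$ small enough depending only on the dimension, is bounded by $\tfrac12 \ell^d=\tfrac12|Q|$. Consequently, the subset $E_Q:=\{x\in Q:|x|\geq c\ell\}$ satisfies $|E_Q|\geq \tfrac12|Q|$. Since $\gamma\geq 0$, the integrand $|x|^\gamma$ is at least $(c\ell)^\gamma$ on $E_Q$, so
\[
\nu(Q) = \int_Q |x|^\gamma\dx \geq \int_{E_Q} |x|^\gamma\dx \geq (c\ell)^\gamma \cdot \tfrac12\ell^d \,\gtrsim\, \ell^{\gamma+d},
\]
which is the first claimed inequality.

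For the second assertion, the plan is simply to combine the first bound with the elementary identity $|Q|=\ell(Q)^d$ to obtain
\[
\frac{|Q|}{\nu(Q)} \lesssim \frac{\ell(Q)^d}{\ell(Q)^{\gamma+d}} = \ell(Q)^{-\gamma}.
\]
Since $\gamma\geq 0$, the map $\ell\mapsto \ell^{-\gamma}$ is nonincreasing, so restricting to cubes with $\ell(Q)\geq \alpha$ yields $\ell(Q)^{-\gamma}\leq \alpha^{-\gamma}$, from which the supremum estimate follows.

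The argument is entirely elementary; there is no real obstacle. The only subtlety is that the estimate genuinely uses $\gamma\geq 0$ in both places (the pointwise lower bound on $|x|^\gamma$ on $E_Q$, and the monotonicity of $\ell\mapsto \ell^{-\gamma}$), which is consistent with the hypothesis of the lemma.
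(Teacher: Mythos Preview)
Your proof is correct, but the route differs from the paper's. The paper argues that, since $|x|^\gamma$ is radially nondecreasing for $\gamma\geq 0$, the integral $\nu(Q)$ is minimized among cubes of a given sidelength by the cube $Q'$ centered at the origin; then a polar-coordinate computation gives $\nu(Q')\gtrsim \ell(Q)^{\gamma+d}$. Your argument instead avoids both the translation/rearrangement step and the polar integration: you excise the small ball $B(0,c\ell)$ and bound the integrand pointwise from below on the remaining set $E_Q$, which still carries at least half the Lebesgue measure of $Q$. This is a more elementary route and makes the dependence on the hypothesis $\gamma\geq 0$ completely transparent; the paper's approach, on the other hand, identifies the exact extremal cube, which is a slightly sharper observation but not needed for the lemma. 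For the second assertion your derivation coincides with the paper's.
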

\begin{proof}
Let $Q$ be a cube. Note that, since $\gamma \geq 0$, we have $\nu(Q) \geq \nu(Q')$, where $Q'$ is the cube with center $0$ and side length $\ell(Q)$. Integrating in polar coordinates shows that 
$
\nu(Q') \gtrsim \ell(Q)^{\gamma+d}
$
proving the first estimate.
The second estimate is an immediate corollary of the first.
\end{proof}

The following lemma is the key in the proof of the non-necessity. 

\begin{lemma}\label{lemma:keylemmaforcommuvsmult}
Let $r \in (1,\infty)$ and set $b := \ind_{B(0,1)}$, where $B(0,1)$ is the unit ball.
\begin{enumerate}[(i)]
\item \label{it:keynec1} Let $\nu(x):=|x|^\gamma$ for $\gamma\geq 0$. Then 
\begin{equation*}
\norm{M^{\sharp}_{\nu}b}_{L^r(\nu)}<\infty.
\end{equation*}
\item \label{it:keynec2} Let $\nu(x):=|x|^{d(r'-1)}$. Then
\begin{equation*}
\inf_c \, \norm{(b-c)\nu^{-1}}_{L^r(\nu)}=\infty.
\end{equation*}
\end{enumerate}
\end{lemma}

\begin{proof}
We use the notation $B(a,s)=\{x\in \R^d : |x-a|\leq s\}$. For \ref{it:keynec1} we write 
\[
\int_{\R^d} (M^{\sharp}_{\nu}b)^r\nu \dx= \Big(\int_{B(0,\frac12)}+\int_{B(0,2)\setminus B(0,\frac12)}+\int_{\R^d\setminus B(0,2)}\Big)(M^{\sharp}_{\nu}b)^r\nu \dx,
\]
and estimate $M^{\sharp}_{\nu}b$ separately in each of the three domains.

For $|x|\leq 1/2$ suppose $Q\ni x$. If $\ell(Q)<\frac{1}{2\sqrt{d}}$, then $Q\subseteq B(0,1)$ and thus 
\[
\int_Q|b-\ip{b}_Q|\dx=0.
\]
For cubes $Q$ with $\ell(Q)\geq \frac{1}{2\sqrt{d}}$, we use the estimate 
\[
\int_Q|b-\ip{b}_Q|\dx\leq 2|Q|,
\]
which yields
\[
M_\nu^{\#}b(x)\leq 2\sup_{Q\ni x: \ell(Q)\geq \frac{1}{2\sqrt{d}}}\frac{|Q|}{\nu(Q)}.
\]
An application of Lemma \ref{lemma:lowerboundforweightofcube} shows that 
$
M_\nu^{\#}b(x)\lesssim 1.
$

For $1/2<|x|\leq 2$ we again estimate
\[
M_\nu^{\#}b(x)\leq 2\sup_{Q\ni x}\frac{|Q|}{\nu(Q)}.
\]
We split into two cases: $\ell(Q) \leq \frac{1}{4\sqrt{d}}$ and $\ell(Q)\geq \frac{1}{4\sqrt{d}}$. In the first case, we have $Q\subseteq \{y \in \R^d\colon |y|\geq 1/4\}$ and therefore 
\[
|Q|\leq 4^\gamma \nu(Q).
\]
In the second case, we use Lemma \ref{lemma:lowerboundforweightofcube} to get
\[
{|Q|}\lesssim \ell(Q)^{\gamma+d}\lesssim {\nu(Q)} .
\]
Combining these estimates, we again obtain $M_\nu^{\#}b(x)\lesssim 1.$

For $|x|> 2$ note first that for any cube $Q$,
\[
\int_Q|b-\ip{b}_Q|\dx\leq 2\int_Q|b|\dx\leq 2\int_{\R^d}|b|\dx\lesssim 1.
\]
If $Q\cap B(0,1)=\emptyset$, we trivially have
\[
\int_Q|b-\ip{b}_Q|\dx=0.
\]
Thus, by Lemma \ref{lemma:lowerboundforweightofcube}, we get 
\[
M_\nu^{\#}b(x)\lesssim \sup_{Q\ni x,Q\cap B(0,1)\neq\emptyset}\frac{1}{\nu(Q)}\lesssim \sup_{Q\ni x,Q\cap B(0,1)\neq\emptyset}\frac{1}{\ell(Q)^{\gamma+d}}.
\]
Note that for all cubes $Q$ in the supremum we have
\[
\ell(Q)\sqrt{d}\geq |x|-1\geq \tfrac{1}{2}|x|
\]
and thus $\ell(Q)\gtrsim |x|$. This yields 
\[
M_\nu^{\#}b(x)\lesssim \frac{1}{|x|^{\gamma+d}}.
\]

The three domains have been considered and using the achieved pointwise estimates respectively, we get
\[
\int_{\R^d} (M^{\sharp}_{\nu}b)^r\nu \dx \lesssim \int_{B(0,2)}|x|^\gamma\dx+\int_{\R^d\setminus B(0,2)}|x|^{-r(\gamma+d)+\gamma}\dx.
\]
Because $\gamma\geq 0$ and $r>1$, we have $-r(\gamma+d)+\gamma = -\gamma(r-1) -dr< -d$. Therefore both the first and the second integral is finite.

\medskip
 
For \ref{it:keynec2} suppose first that $c=0$. Then 
\[
\|(b-c)\nu^{-1}\|_{L^r(\nu)}^r\geq \int_{B(0,1)}|x|^{d(r'-1)(1-r)}\dx=\infty,
\]
because $d(r'-1)(1-r)=-d$. Next, suppose  that $c\neq 0$. Then 
\[
\|(b-c)\nu^{-1}\|_{L^r(\nu)}^r\geq \abs{c}^r\int_{\R^d\setminus B(0,1)}|x|^{d(r'-1)(1-r)}\dx=\infty,
\]
again because $d(r'-1)(1-r)=-d$. 
\end{proof}

\section*{Acknowledgements}
{The authors thank Tuomas Hyt\"onen and Tuomas Oikari for helpful and inspiring discussions. Furthermore, the authors express their gratitude to Pascal Auscher and Pierre Portal for help with the abstract in French. All the authors are supported by the Academy of Finland (through Projects 332740 and 336323).
}
\bibliographystyle{alpha}
\bibliography{manuscript_bib}

\end{document}